\DeclareMathAlphabet{\mathpzc}{OT1}{pzc}{m}{it}
\newcommand\reallywidehat[1]{%
\savestack{\tmpbox}{\stretchto{%
  \scaleto{%
    \scalerel*[\widthof{\ensuremath{#1}}]{\kern.1pt\mathchar"0362\kern.1pt}%
    {\rule{0ex}{\textheight}}%WIDTH-LIMITED CIRCUMFLEX
  }{\textheight}% 
}{2.4ex}}%
\stackon[-6.9pt]{#1}{\tmpbox}%
}
\newcommand{\eps}{\varepsilon}
\renewcommand{\P}[1]{\ensuremath{\mathbb P \left[ #1 \right]}}
\newcommand{\E}[1]{\ensuremath{\mathbb E \left[ #1 \right]}}
\newcommand{\Leb}{\operatorname{Leb}}
\newcommand{\Q}{\mathds Q}
\newcommand{\R}{\mathds R}
\newcommand{\N}{\mathds N}
\newcommand{\Nz}{\mathds N_0}
\newcommand{\mC}{\mathcal C}
\newcommand{\mCz}{\mC [0,\infty)^{\Nz}}
\newcommand{\mCo}{\mC [0,\infty)^\N}
\newcommand{\F}{\mathcal F}
\newcommand{\I}{\mathds 1}
\newcommand{\cdl}{c\`{a}dl\`{a}g }\usepackage{ifsym}
\newcommand{\DC}{D((0,1),\mC[0,\infty))}
\newcommand{\DI}{D^{\uparrow}}
\newcommand{\DIL}{D^{\uparrow}_2}
\newcommand{\CL}{\mathcal{C}L_2^{\uparrow}}
\newcommand{\Li}{L_2^{\uparrow}}
\newcommand{\ltp}{L_{2+}^{\uparrow}}
\newcommand{\St}{\mathrm {St}}
\newcommand{\p}{\mathbb{P}}
\newcommand{\e}{\mathbb{E}}
\newcommand{\coal}{{\bf Coal}}
\newcommand{\coalex}{{\bf Coal}^{\mathrm{ex}}}
\newcommand{\B}{\mathcal{B}}
\newcommand{\bE}{{\bf E}}
\newcommand{\bF}{{\bf F}}
\newcommand{\cP}{\mathcal P}
\newcommand{\bG}{{\bf G}}
\newcommand{\Y}{\mathpzc{Y}}
\newcommand{\W}{\mathpzc{W}}
\newcommand{\X}{\mathpzc{X}}
\newcommand{\mZ}{\mathcal{Z}}
\newcommand{\G}{\mathcal G}
\renewcommand{\H}{\mathcal H}
\newcommand{\Gl}{\mathrm{Gl}}
\newcommand{\T}{\mathrm{T}}
\newcommand{\ltz}{L_2^0}
\newcommand{\pr}{pr}
\newcommand{\law}{Law}
\newcommand{\spann}{span}
\begin{document}

\title{On Conditioning Brownian Particles to Coalesce
}

\author{Vitalii Konarovskyi         \and
        Victor Marx 
}

\institute{V. Konarovskyi \\
\Letter \; vitalii.konarovskyi@math.uni-bielefeld.de \at
Fakult\"{a}t f\"{u}r Mathematik, Bielefeld Universit\"{a}t, Universit\"{a}tsstra{\ss}e 25, 33615 Bielefeld, Germany.\\
              Fakult\"{a}t f\"{u}r Mathematik und Informatik, Universit\"{a}t Leipzig, Augustusplatz 10, 04109 Leipzig, Germany. \\
              Institute of Mathematics of NAS of Ukraine, Tereschenkivska st. 3, 01024 Kiev, Ukraine.      
           \and
           V. Marx \\ 
           \Letter \; marx@math.uni-leipzig.de
          \at
              Fakult\"{a}t f\"{u}r Mathematik und Informatik, Universit\"{a}t Leipzig, Augustusplatz 10, 04109 Leipzig, Germany.             
}

\date{\today}

\maketitle

\begin{abstract}
We introduce the notion of a conditional distribution to a zero-probability event in a given direction of approximation, and prove that the conditional distribution of a family of independent Brownian particles to the event that their paths coalesce after the meeting coincides with the law of a modified massive Arratia flow, defined in~\cite{Konarovskyi:AP:2017}.
\keywords{regular conditional probability \and modified massive Arratia flow \and cylindrical Wiener process\and coalescing Brownian motions}
 \subclass{MSC 2020: Primary 60E05, 60J90; Secondary  60G15, 60G44, 60K35}
\end{abstract}

\section{Introduction}

One of classical systems of interacting particles is the {\it Arratia flow} or {\it coalescing Brownian particles}, proposed by R.~Arratia in~\cite{Arratia:1979} (see also~\cite{Berestycki:2015,Le_Jan_Raimond:2004,Le_Jan:2004}). It is the family of one-dimensional Brownian motions with the same diffusion rate starting at every point of the real line and moving independently until their meeting. When two particles collide, they coalesce and move together. The model was obtained as a scaling limit of a continuous analog of a family of coalescing random walks on the real line, and the initial interest of the study was its connection with a voter model~\cite{Arratia:1979,Arratia:1981}. Later the Arratia flow and its generalization, Brownian web~\cite{Fontes:2004}, appear as scaling limits of seemingly disconnected models like true self-repelling motion~\cite{Toth:1998}, Hastings-Levitov planer aggregation models~\cite{Norris:2012}, oriented percolation~\cite{Sarkar:2013}, isotropic stochastic flows of homeomorphisms in $\R $~\cite{Piterbarg:1998}, solutions to evolutionary stochastic differential equations~\cite{Dorogovtsev:2004}, etc. In particular, this leads to the intensive study of the properties of the Arratia flow. We refer to~\cite{Tsirelson:2004,Fontes:2004,Riabov:2017,Dorogovtsev:2020,Shamov:2011,Vovchanskii:2012,Glinyanaya:2018,Dorogovtsev:2010,Tribe:2011,Munasinghe:2006,Korenovska:2017} for more details.

However the classical Arratia flow does not take into account the physical characteristics of particles like mass, spin, charge, etc., which can influence the particle behavior. In~\cite{Konarovskyi:AP:2017,Konarovskyi:CPAM:2019}, the first author proposed a physical improvement called a {\it modified massive Arratia flow} (shortly MMAF), where the diffusion rate of particles depends inversely proportional on their mass. More precisely, every particle carries a mass that obeys the conservation law, i.e., the mass of a new particle that appeared after the coalescing equals the sum of the colliding particles. This type of interaction makes the particle system more natural from a physical point of view and leads to a new local phenomena~\cite{Konarovskyi:EJP:2017}. It turns out that the MMAF is closely related with the geometry of the Wasserstein space of probabilities measures on the real line~\cite{Konarovskyi:CPAM:2019} and also is a non-trivial solution to the Dean-Kawasaki equation for supercooled liquids appearing in macroscopic fluctuation theory or models for glass dynamics in non-equilibrium statistical physics~\cite{Bertini:2015,Torre:2015,Dean:1996,Delfau:2016,Donev_Fai:2014,Donev_Vanden_Eijnden:2014,Embacher:2018,Giacomin:1999,Kawasaki:1994,Kipnis:1989,Marconi:1999,Rotskoff:2018,Sadhu:2016,Spohn:1991,Velenich:2008,Konarovskyi:ECP:2018,Konarovskyi:DK:2020}. For the regularised versions of the Dean-Kawasaki equation see also~\cite{Cornalba:2019,Cornalba:2020,Gess:2019}. This makes the model of a reasonable candidate for a Brownian motion on the Wasserstein space. 

The main goal of this paper is to show that the MMAF appears by the conditioning of independent Brownian particles (more precisely, a cylindrical Wiener process) to the event that particle paths ``coalesce'' after their meeting. 
To be more precice, we will justify that the conditional law of a cylindrical Wiener process in $L_2[0,1]$ starting at some non-decreasing function $g$ to the event of coalescence is the law of a MMAF. But we will pay the prize of having to investigate more carefully the notion of conditional law to a zero-probability event, allowing to define it only in some directions of approximation.  
First of all, this observation would explain some similarities of the particle model with a Wiener process in the Euclidian space. For instance, the rate function in the large deviation principle for the MMAF has a similar form as the rate function for a usual Wiener process (see~\cite[Theorem~2.1]{Konarovskyi:2014} for a finite particle system and~\cite[Theorem~1.4]{Konarovskyi:CPAM:2019} for the MMAF starting from all points of an interval). Secondary, we hope this result will shed some light on the uniqueness of the distribution of the MMAF, which is one of the biggest problems.

We first introduce a definition of a conditional distribution along a direction, which allows to interprete a value of the commomly-used  notion of regular conditional probability at a fixed point (see e.g~\cite[Theorem I.3.3]{Ikeda:1989} and~\cite[Theorem 6.3]{Kallenberg:2002} for the existence of the regular conditional probability).

Let $\bE$ be a Polish space, $\B(\bE)$ denote the Borel $\sigma$-algebra on $\bE$ and $\cP(\bE)$ be the space of probability measures on $(\bE,\B(\bE))$ endowed with the topology of weak convergence.  In general, given a random element $X$ in $\bE$ and $C \in \B(\bE)$ such that $\P{X \in C}=0$, defining the conditional probability $\P{X \in \cdot | X \in C}$ has no sense if we consider $\{X \in C  \}$ as an isolated event. However, one can make a proper definition with the help of regular conditional probability if $C$ is given by  $C= \T^{-1} (\{ z_0 \})$, where $z_0$ belongs to a metric space $\bF$ and $\T: \bE \to \bF$ is some measurable map.  Let $p: \B(\bE) \times \bF \to [0,1]$ be a regular conditional probability\footnote{See Definition~\ref{def:regular_conditional_probability} in appendix.} of $X$ given $\T(X)$. If $p(\cdot,z)$, $z\in\bF$, is continuous in $z_0$, then one can define $\P{X \in C}$ to be equal to $p(\cdot,z_0)$. But in general case, the regular conditional probability $p$ is well-defined for only $\p^{\T(X)}$-almost every $z \in \bF$, where $\p^{\T(X)}$ denotes the law of $\T(X)$. Therefore, we will introduce a notion of the value of $p$ at a fixed point along a (random) direction. 

\begin{definition}
\label{def:value_along_xi_n}
Let $\{ \xi^n\}_{n\geq 1}$ be a sequence of random elements in $\bF$ such that 
\begin{itemize}[noitemsep]
\item[(B1)] for each $n \geq 1$, the law of $\xi^n$ is absolutely continuous with respect to the law of $\T(X)$;
\item[(B2)] $\{\xi^n\}_{n\geq 1}$ converges  in distribution  to $z_0$ in $\bF$. 
\end{itemize} 
A probability measure $\nu$ on $(\bE,\B(\bE))$ is \emph{the value of the conditional distribution of $X$ to the event $\{ \T(X)=z_0\}$ along the sequence  $\{ \xi^n \}$} if for every $f \in \mC_b(\bE)$ 
\begin{equation} 
  \label{equ_value_of_p_along_xi_n}
  \E{\int_{\bE} f(x) p(\mathrm dx, \xi^n)} \to \int_{\bE} f(x) \nu (\mathrm dx), \quad n\to\infty,
\end{equation}
where $p$ is a regular conditional probability of $X$ given $\T(X)$. 
We denote this measure by $\nu= \law_{\{\xi^n\}}(X | \T(X)=z_0)$.
\end{definition}

We remark that the measure $\nu$ does not depend on the version of the regular conditional probability $p$. In Section~\ref{sec:conditional_distribution}, we explain that the above definition generalizes the case where $p$ is continuous at $z_0$ and that it is very close to the intuitive definition of the conditional probability $\P{ X \in \cdot \ | X \in C }$ by approximation of the set $C$. 
Furthermore, we introduce in Section~\ref{sec:conditional_distribution} a method to construct~$\nu$.

In order to formulate the main result of the paper, we remind the definition of the MMAF\footnote{For some properties of the MMAF see~\cite{Konarovskyi:TVP:2010,Konarovskyi:2014,Konarovskyi:AP:2017,Konarovskyi:EJP:2017,Marx:2018,Konarovskyi:CPAM:2019}.}. Let $\DC$ denote the space of \cdl functions from $(0,1)$ to $\mC([0,\infty), \R)$.
Let $g:[0,1] \to \R$ be a non-decreasing \cdl function such that $\int_0^1 |g(u)|^p \mathrm du < \infty$ for some $p>2$. 

\begin{definition}
\label{def:mmaf}
A random element $\Y=\{\Y(u,t),\ u \in (0,1),\ t \in [0,\infty)\}$ in the space $\DC$ is called \textit{modified massive Arratia flow} (shortly MMAF) starting at $g$ if it satisfies the following properties
 \begin{enumerate}[noitemsep]
    \item[(E1)] for all $u \in (0,1)$ the process $\Y(u,\cdot )$ is a continuous square-integrable martingale with respect to the filtration 
\begin{align}
\label{filtration_FtY}
	\F_t^\Y=\sigma(\Y(v,s),\ v \in (0,1),\ s\leq t), \quad t\geq 0;
\end{align}

    \item[(E2)] for all $u \in (0,1)$, $\Y(u,0)=g(u)$;

    \item [(E3)] for all $u<v$ from $(0,1)$ and $t\geq 0$, $\Y(u,t)\leq \Y(v,t)$;

    \item [(E4)] for all $u,v \in (0,1)$, the joint quadratic variation of $\Y(u,\cdot)$ and $\Y(v, \cdot )$ is 
      \[
	\left\langle \Y(u,\cdot ),\Y(v,\cdot ) \right\rangle_t =\int_{ 0 }^{ t } \frac{ \I_{\left\{ \tau_{u,v}\leq s \right\}} }{ m(u,s) } \mathrm ds, \quad t\geq 0, 
      \]
      where $m(u,t)=\Leb\left\{ v:\ \exists s\leq t,\ \Y(v,s)=\Y(u,s) \right\}$ and $\tau_{u,v}=\inf\{ t:\ \Y(u,t)=\Y(v,t) \}$.
  \end{enumerate}
\end{definition}
Intuitively, the massive particles $\Y(u,\cdot)$, for each $u \in (0,1)$, evolve like independent Brownian particles with diffusion rates inversely proportional to their masses, until two of them collide. 
 When two particles meet, they coalesce and form a new particle with the mass equal to the sum of masses of the colliding particles.

The random element $\Y$ can be identified with an $\Li$-valued process $\Y_t$, $t \geq 0$, where $\Li$ is the subset of $L_2[0,1]$ consisting of all functions which have non-decreasing versions. There exists a cylindrical Wiener process $\W$ in $L_2[0,1]$ starting at $g$ such that 
\begin{equation} 
  \label{intro:equ_couple}
  \Y_t=g+\int_{ 0 }^{ t } \pr_{\Y_s} \mathrm d\W_s, \quad t\geq 0,
\end{equation}
where for any $f \in \Li$, $\pr_f$ is the orthogonal projection operator in $L_2[0,1]$ onto the subspace of $\sigma(f)$-measurable functions. Those results will be recalled with further details and references in Section~\ref{sec:properties_statement}.

Our main results consists in the construction of the following objects and in the following theorem.

\begin{enumerate}[noitemsep]

\item[(S1)] We start from $\Y$, a MMAF starting at a strictly increasing map $g$. 

\item[(S2)] Thus there exists a cylindrical Wiener process $\W$ in $L_2[0,1]$ starting at $g$ satisfying~\eqref{intro:equ_couple}. $\Y$ can be seen as the coalescing part of $\W$. 

\item[(S3)] Given $\X=(\Y,\W)$, we  decompose $\W$ into $\Y$ and a non-coalescing part $\T(\X)$, so that $\W$ is completely determined by $\Y$ and $\T(\X)$. We postpone to Section~\ref{parag:construction_map_T} the precise definition of the map $\T$. 
We are interested in the conditional distribution of $\X$ to the event $\{ \T(\X)=0\}$, which is the event where $\W$ coincides with its coalescing part $\Y$.

\item[(S4)] For every $n \geq 1$, $\xi^n$ is defined as a sequence $\{\xi^n_j\}_{j \geq 1}$ of independent Ornstein-Uhlenbeck processes such that $\{ \xi^n\}_{n\geq 1}$ converges in distribution to zero in the space $\mC[0,\infty)^{\N}$, equipped with the product topology, and the law of $\xi^n$ is absolutely continuous with respect to the law of $\T(\X)$, which is the law of a sequence of independent standard Brownian motions. 
\end{enumerate}

\begin{theorem}
\label{theo:main}
The value of the conditional distribution of $\X=(\Y,\W)$ to the event $\{ \T(\X)=0\}$ along $\{\xi^n\}$ is the law of $(\Y,\Y)$.
\end{theorem}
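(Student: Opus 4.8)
The plan is to turn the defining limit~\eqref{equ_value_of_p_along_xi_n} into a weighted expectation via a change of measure, and then to exploit the fact that, by the construction of $\T$ in~(S3), the coalescing part $\Y$ and the non-coalescing part $\T(\X)$ together determine $\X$ and are independent. Let $\rho_n$ be the density of $\law(\xi^n)$ with respect to $\p^{\T(\X)}$, which exists by~(B1). The defining property of the regular conditional probability gives $\int_\bE f(x)\, p(\mathrm dx, \T(\X)) = \E{f(\X)\mid \T(\X)}$; pulling the $\sigma(\T(\X))$-measurable factor $\rho_n(\T(\X))$ inside the conditional expectation, I would first establish for every $f \in \mC_b(\bE)$ the identity
\[
  \E{\int_\bE f(x)\, p(\mathrm dx, \xi^n)}
  = \int_\bF \Big( \int_\bE f(x)\, p(\mathrm dx, z) \Big)\, \rho_n(z)\, \p^{\T(\X)}(\mathrm dz)
  = \E{ f(\X)\, \rho_n(\T(\X)) }.
\]
As $\E{\rho_n(\T(\X))} = 1$, the weight $\rho_n(\T(\X))$ is a probability density that tilts $\p$ towards the region $\{\T(\X)\approx 0\}$, on which $\W = \Y$, hence $\X = (\Y,\Y)$. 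It thus suffices to prove that $\E{f(\X)\,\rho_n(\T(\X))} \to \int_\bE f\, \mathrm d\law(\Y,\Y)$.

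Writing $\X = \Phi(\Y, \T(\X))$ for the measurable reconstruction map furnished by~(S3), one has $\Phi(y,0) = (y,y)$, since the vanishing of the non-coalescing part forces $\W = \Y$. Granting the independence $\Y \perp \T(\X)$ --- a property that the construction of $\T$ should provide --- the joint law factorises as a product, and using $\rho_n\,\p^{\T(\X)} = \law(\xi^n)$ I would rewrite
\[
  \E{ f(\X)\, \rho_n(\T(\X)) }
  = \int \Big( \int_{\bF} f\big( \Phi(y, z) \big)\, \law(\xi^n)(\mathrm dz) \Big)\, \p^{\Y}(\mathrm dy)
  = \int \E{ f\big( \Phi(y, \xi^n) \big) }\, \p^{\Y}(\mathrm dy),
\]
where $\xi^n$ in the inner expectation is independent of $y$. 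Since $|f|\le \|f\|_\infty$ is a uniform bound, dominated convergence reduces everything to the pointwise statement $\E{ f(\Phi(y, \xi^n)) } \to f(y,y)$ for $\p^{\Y}$-almost every $y$.

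The main obstacle is exactly this convergence $\E{ f(\Phi(y, \xi^n)) } \to f(\Phi(y,0)) = f(y,y)$, and it is here that the \emph{direction} of approximation matters. Reconstructing the cylindrical process $\W$ from $\Y$ and the non-coalescing data amounts to superposing $\Y$ with an orthogonal contribution built, through the $\Y$-dependent projections $I - \pr_{\Y_s}$, out of the countable family of coordinate processes encoded in $\T(\X)$. In finite dimensions this reconstruction is continuous in the uniform topology, so $\xi^n \to 0$ in distribution yields $\Phi(y,\xi^n) \to (y,y)$ by the continuous mapping theorem, independently of the approximating sequence --- this is the source of the direction-independence announced for finitely many particles. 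In infinite dimensions, $z \mapsto \Phi(y,z)$ need not be continuous at $0$, because the series reconstructing $\W$ converges only cylindrically; the limit can a priori depend on the direction, and one must control an infinite sum of stochastic integrals uniformly in $n$. This is where the Ornstein--Uhlenbeck structure of $\{\xi^n\}$ from~(S4) is indispensable: each coordinate $\xi^n_j$ shares the quadratic variation of a standard Brownian motion, which is what makes~(B1) hold, while its strong mean reversion pushes its trajectories to $0$, so that the orthogonal contribution vanishes in $L_2[0,1]$ along this particular sequence and $\Phi(y,\xi^n) \to (y,y)$ follows. Carrying out this last step --- identifying the limiting orthogonal contribution and proving it null along the OU direction --- is the principal analytic difficulty of the argument.
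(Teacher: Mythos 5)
Your reduction is sound and in fact mirrors the paper's own strategy: the identity
$\E{\int_\bE f(x)\,p(\mathrm dx,\xi^n)} = \int \E{ f(\Phi(y,\xi^n)) }\,\p^{\Y}(\mathrm dy)$
is exactly what Proposition~\ref{prop:existence_regular} delivers once one has the quadruple (P1)--(P4): a measurable reconstruction map (the paper's $\Psi$, built from $\varphi$ in~\eqref{formula_phi}), the identity $\T(\Psi(Y,Z))=Z$ (Proposition~\ref{prop:T_of_psi_is_identity}), the distributional identity $\law(\X)=\law(\Psi(\Y,Z))$ (Lemma~\ref{lem:phi_is_cylindricalWp}), and the independence of $\Y$ and $\T(\X)$ (Proposition~\ref{prop:xi_is_cylindricalWp}). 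Be aware that these are not free: the independence you ``grant'' is itself a nontrivial statement, proved in the paper via the strong Markov property of the driving noise (Lemma~\ref{lem:shifted_W_is_BM}), and the pathwise reconstruction $\X=\Phi(\Y,\T(\X))$ requires the explicit series formula for $\varphi$ together with the convergence of that series.

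The genuine gap is the step you explicitly defer: proving $\Phi(y,\xi^n)\to(y,y)$ in distribution for $\p^{\Y}$-almost every $y$. This is not a residual technicality; it is the mathematical core of the theorem and occupies all of Section~\ref{parag:value_of_regular_conditional_probability_along_a_sequence}. Moreover, your heuristic for why it should hold --- the OU mean reversion killing the orthogonal contribution --- identifies only half of the required input, and the missing half is a property of $\Y$, not of $\{\xi^n\}$. After conditioning, the remainder is the infinite superposition $R^n_j(t)=\sum_{k,l}(e_k^y,h_j)_{L_2}(h_l,e_k^y)_{L_2}\I_{\{t\geq\tau_k^y\}}\xi^n_l(t-\tau_k^y)$, and mean reversion (condition (O2)) only yields $\E{(R^n_j(t))^2}\to 0$ for fixed $t$ and $j$ (Lemma~\ref{lem_L2}). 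To upgrade this to convergence in the path space one needs tightness of $\{R^n\}$, and the paper's tightness proof (Lemma~\ref{lem_tight1}) hinges on the modulus-of-continuity estimate $\E{(R^{n,2}_j(t)-R^{n,2}_j(s))^2}\leq 4(t-s)^{\eps}\sum_k(\tau_k^y)^{1-\eps}$, which is useful only because the coalescence times of the MMAF satisfy $\sum_k(\tau_k^{\Y})^{1-\eps}<\infty$ almost surely (Lemma~\ref{lem:stopping_time_mmaf}, resting ultimately on the bound $\E{N(\Y_t)}\leq C t^{-1/2}$). Without this quantitative control on the rate of coalescence, the infinite sum cannot be controlled uniformly in $n$, and the convergence along the Ornstein--Uhlenbeck direction does not follow from mean reversion alone. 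So your proposal is a correct reduction to the right pointwise statement, but the analytic substance of the theorem is missing.
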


Unfortunatly, we can not prove the result for any sequence $\{\xi^n\}$ satisfying (B1)-(B2), and this seems  to be not achievable and possibly even not true. Nevertheless, a sequence of Ornstein-Uhlenbeck processes  seems a  reasonable choice of $\{\xi^n\}$ satisfying (B1)-(B2).
We refer to Theorem~\ref{theo:infinite_dim} for a more precise statement after having carefully defined $\T$ and $\{ \xi^n\}_{n\geq 1}$ among others.

Our second result is the fact that $(\Y,\W)$ coupled by equation~\eqref{intro:equ_couple} is uniquely determined by the law of $\Y$. It does not impy the uniqueness of the distribution of $\Y$. However, we hope that it could be a first step in the proof that $\Y$ is a unique solution the the SDE~\eqref{intro:equ_couple}.
\begin{theorem}
\label{theo:coupling}
Let $\Y_t$, $t \geq 0$, be a MMAF starting at $g$. Let $\W$ and $\widetilde{\W}$ be cylindrical Wiener processes in $L_2$ starting at $g$ and such that $(\Y, \W)$ and $(\Y, \widetilde{\W})$ satisfy equation~\eqref{intro:equ_couple}. Then $\law (\Y, \W)= \law (\Y, \widetilde{\W})$. 
\end{theorem}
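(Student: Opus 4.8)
The plan is to split $\W$ into a part that is a measurable functional of $\Y$ and a complementary martingale whose conditional law given $\Y$ is forced to be Gaussian, so that the joint law $\law(\Y,\W)$ depends only on $\law(\Y)$.

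First, I would fix a coupling $(\Y,\W)$ satisfying~\eqref{intro:equ_couple} and, for $h\in L_2[0,1]$, introduce the real-valued process
\begin{equation*}
  M_t(h):=\W_t(h)-(\Y_t,h)_{L_2},\qquad t\geq 0.
\end{equation*}
Since $(\Y_t,h)_{L_2}=(g,h)_{L_2}+\int_0^t(\pr_{\Y_s}h)\,\mathrm d\W_s$ by~\eqref{intro:equ_couple} and $\pr_{\Y_s}$ is a self-adjoint projection, a direct computation of brackets gives that $M_\cdot(h)$ is a continuous square-integrable martingale with $M_0(h)=0$,
\begin{equation*}
  \langle M(h_1),M(h_2)\rangle_t=\int_0^t\big((I-\pr_{\Y_s})h_1,h_2\big)_{L_2}\,\mathrm ds,\qquad \big\langle M(h),(\Y_\cdot,k)_{L_2}\big\rangle_t=0 .
\end{equation*}
The crucial point is that the right-hand sides depend on the coupling only through $\Y$. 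Because $\W_t(h)=(\Y_t,h)_{L_2}+M_t(h)$ and the first summand is a measurable functional of $\Y$, the law $\law(\Y,\W)$ is determined once we know, for $\p$-a.e.\ realization of $\Y$, the conditional law of the family $\{M_\cdot(h)\}_h$ given $\F^\Y_\infty:=\sigma(\Y(v,s):v\in(0,1),s\geq0)$. Thus it suffices to prove that this conditional law is itself a fixed functional of $\Y$, hence the same for $\W$ and for $\widetilde{\W}$.

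The main step is to show that, conditionally on $\F^\Y_\infty$, the process $\{M_t(h)\}$ is centered Gaussian, with
\begin{equation*}
  \E{M_s(h_1)M_t(h_2)\mid\F^\Y_\infty}=\int_0^{s\wedge t}\big((I-\pr_{\Y_r})h_1,h_2\big)_{L_2}\,\mathrm dr .
\end{equation*}
Granting this, the conditional law of $\W_\cdot(h)=(\Y_\cdot,h)_{L_2}+M_\cdot(h)$ given $\F^\Y_\infty$ is Gaussian with mean $(\Y_\cdot,h)_{L_2}$ and the above covariance, i.e.\ an explicit functional of $\Y$ that does not depend on which compatible cylindrical Wiener process we started from. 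Since $\Y$ has the same law in both couplings, integrating the conditional finite-dimensional characteristic functionals of $(\Y,\W_\cdot(h_1),\dots,\W_\cdot(h_m))$ and of $(\Y,\widetilde{\W}_\cdot(h_1),\dots,\widetilde{\W}_\cdot(h_m))$ against $\law(\Y)$ yields the same value; as these finite-dimensional distributions determine the law of the coupling, I conclude $\law(\Y,\W)=\law(\Y,\widetilde{\W})$.

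To establish the conditional Gaussianity I would argue that $\{M_t(h)\}$ remains a continuous martingale in the initially enlarged filtration $\G_t:=\F_t\vee\F^\Y_\infty$ and that its bracket, given above, is $\F^\Y_\infty$-measurable; a conditional version of Lévy's characterization (realised through the exponential martingales $\exp(i M_t(h)+\tfrac12\langle M(h)\rangle_t)$ and their cylindrical analogue) then identifies the conditional characteristic functional as the Gaussian one. The hard part is exactly the enlargement step: one must check that revealing the \emph{entire} trajectory of $\Y$ does not destroy the martingale property of $M$. The guiding picture, clearest in the finite-particle reduction, is that $\F^\Y_\infty$ encodes only the coalescence times and the motions of the coalesced clusters; before any given coalescence the corresponding fluctuation lies in the range of $\pr_{\Y_s}$ and is therefore already carried by the coalescing part $\Y$, whereas $M$ accumulates only the post-coalescence internal fluctuations, which live in the orthogonal complement $(I-\pr_{\Y_s})L_2[0,1]$ and are fresh increments of $\W$ independent of the information in $\F^\Y_\infty$. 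Making this orthogonality-plus-freshness heuristic into a rigorous enlargement-of-filtration statement, uniformly over the (possibly infinitely many) coalescence events in infinite dimension, is where the real work lies.
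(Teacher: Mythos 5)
Your reduction aims at exactly the right statement, and your bracket computations are correct: the theorem is equivalent to saying that the conditional law of $M_\cdot(h)=\W_\cdot(h)-(\Y_\cdot,h)_{L_2}$ given $\sigma(\Y)$ is a fixed Gaussian kernel in $\Y$, which is also what the paper establishes, in the equivalent form of Proposition~\ref{pro_uniqueness_of_coupling} ($\W$ is a fixed functional of $(\Y,B)$ with $B$ a cylindrical Wiener process \emph{independent} of $\Y$). The problem is that the step you defer, namely that $M$ remains a martingale under the initial enlargement $\G_t=\F_t\vee\sigma(\Y)$, is not a technical remainder: it \emph{is} the theorem, and the ingredients you propose for it (zero cross-bracket with $\Y$, $\sigma(\Y)$-measurable bracket of $M$, conditional L\'evy) are provably insufficient. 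Zero quadratic covariation does not survive initial enlargement. Counterexample: let $W^1,W^2$ be independent Brownian motions, $M=W^2$, and $Y_t=\int_0^t\bigl(1+\I_{\{s>1\}}\I_{\{M_1>0\}}\bigr)\,\mathrm dW^1_s$. Then $M$ and $Y$ are continuous martingales, $\langle M,Y\rangle\equiv 0$, and $\langle M\rangle_t=t$ is trivially $\sigma(Y)$-measurable; yet $\sigma(Y)$ contains $\I_{\{M_1>0\}}$ (read off from $\langle Y\rangle$), so $\E{M_1-M_{1/2}\,\big|\,\F_{1/2}\vee\sigma(Y)}\neq 0$ and $M$ is not a $\bigl(\F_t\vee\sigma(Y)\bigr)$-martingale. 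Hence nothing in your displayed identities rules out that the future of $\Y$ carries information about the future increments of $M$; some structural property of equation~\eqref{intro:equ_couple} beyond covariations must be used, and assuming that the post-coalescence increments are ``fresh and independent of $\F^\Y_\infty$'' is circular.

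The missing input is pathwise uniqueness restarted at the coalescence times, and that is precisely how the paper closes this gap. Writing $\W^k_t=\W_{t+\tau_k}-\W_{\tau_k}$ and using the basis $\{e_k\}$ of Lemma~\ref{lem:onb}, the paper proves (Lemma~\ref{lem_independence_of_y_and_wk}) that $\Y$ and the post-coalescence directions $\W^k(e_k)$, $k\geq 1$, are independent. The crux is Lemma~\ref{lem_measurability_of_y}: after $\tau_k$ the process $\Y_{\tau_k+\cdot}$ is the unique \emph{strong} solution of $Z_t=\Y_{\tau_k}+\int_0^t\pr_{Z_s}\,\mathrm d\zeta^k_s$, where $\zeta^k=\int_0^\cdot\pr_{\Y_{\tau_k}}\mathrm d\W^k_s$; strong uniqueness holds because $\Y_{\tau_k}$ is a step function (Corollary~\ref{cor_existence_of_solution_to_the_equation}). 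Consequently the entire future of $\Y$ is measurable with respect to the stopped pair $\X_{\cdot\wedge\tau_k}$ and the \emph{parallel} part of the post-$\tau_k$ noise, and therefore reveals nothing about the orthogonal directions $\W^k(e_l)$, $l\geq k$. This uniqueness argument, propagated inductively through the countably many coalescence events, is the rigorous version of your heuristic and cannot be replaced by covariation identities. (A smaller point: in the direction $e_k$ your conditional covariance degenerates on $[0,\tau_k]$, so to realize the conditional law through a genuine cylindrical Wiener process independent of $\Y$ the paper must also splice in exogenous Brownian motions $\beta_k$ on $[0,\tau_k]$, as in~\eqref{equ_family_bk}; your kernel formulation sidesteps this, but identifying the kernel still requires the independence above.) Until an argument of this type is supplied, the proposal is a correct reduction of the theorem to its hardest part, not a proof.
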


Theorem~\ref{theo:coupling} has an interest which is independent of the conditional distribution problem, but it is proved using the same techniques as for Theorem~\ref{theo:main}. Moreover, as a corollary, one can see that  steps (S1) and (S2) in the statement of the main result can be replaced  by starting from any pair $(\Y,\W)$ coupled by~\eqref{intro:equ_couple}, which is a  stronger result.

\textbf{Content of the paper.}
In Section~\ref{sec:conditional_distribution}, we  propose a method for the construction of a conditional distribution according to Definition~\ref{def:value_along_xi_n}. In Section~\ref{sec:properties_statement}, we recall needed properties of the MMAF and define the non-coalescing map~$\T$, using a construction of an orthonormal basis in $L_2[0,1]$ which is tailored for the MMAF. Finally in that section, we state the main result in Theorem~\ref{theo:infinite_dim}.  Sections~\ref{sec:proof_main_theorem}, and~\ref{sec:coupling}   are devoted to the proofs of Theorem~\ref{theo:infinite_dim} and~Theorem~\ref{theo:coupling}, respectively.

\section{On conditional distributions}
\label{sec:conditional_distribution}

Definition~\ref{def:value_along_xi_n} is consistent with the continuous case. Indeed, if the map $z \mapsto p(\cdot,z)$ is continuous at $z_0$, then by the continuous mapping theorem $p(\cdot,z_0) = \law_{\{\xi^n\}}(X | \T(X)=z_0)$ for any sequence $\{ \xi^n\}_{n\geq 1}$ satisfying $(B1)$ and $(B2)$. Actually, it is an equivalence, as the following lemma shows.

\begin{lemma}\label{lem_continuity_and_convergence}
  Let $z_0$ belong to the support of $\p^{\T(X)}$. There exists a probability measure $\nu$
such that   $\nu= \law_{\{\xi^n\}}(X | \T(X)=z_0)$  along any sequence $\{\xi^n\}_{n\geq 1}$ satisfying (B1) and (B2) if and only if there exists a version of $p$ which is continuous at $z_0\in \bF$. In this case, $\nu$ is equal to the value of the continuous version of $p$ at $z_0$.
\end{lemma}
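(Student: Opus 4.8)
The statement is an equivalence, so the plan is to prove the two implications separately, writing $P_f(z) := \int_{\bE} f(x)\, p(\mathrm dx, z)$ for $f \in \mC_b(\bE)$ (a bounded measurable function on $\bF$, determined $\p^{\T(X)}$-a.e. independently of the version of $p$) and abbreviating $\mu := \p^{\T(X)}$. In this notation Definition~\ref{def:value_along_xi_n} reads $\E{P_f(\xi^n)} \to \int_{\bE} f\, \mathrm d\nu$. The implication ($\Leftarrow$) is the one already indicated before the lemma and carries no new idea: if $z \mapsto p(\cdot,z)$ has a version continuous at $z_0$, set $\nu := p(\cdot,z_0)$; then each $P_f$ is continuous at $z_0$, and since (B2) forces $\xi^n \to z_0$ in probability (convergence in distribution to a constant), the continuous mapping theorem together with boundedness of $P_f$ gives $\E{P_f(\xi^n)} \to P_f(z_0) = \int f\, \mathrm d\nu$ for \emph{every} sequence satisfying (B1)--(B2).

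The real content is ($\Rightarrow$). Fix $f$ and set $\ell_f := \int_{\bE} f\, \mathrm d\nu$; the first goal is to show that $\ell_f$ is the $\mu$-essential limit of $P_f$ at $z_0$, i.e. that for every $\eps > 0$ there is a neighbourhood $U$ of $z_0$ with $|P_f - \ell_f| < \eps$ $\mu$-a.e. on $U$. I would argue by contradiction: if this fails, there is $\eps > 0$ such that for every neighbourhood $U$ of $z_0$ the set $\{z \in U : |P_f(z) - \ell_f| \geq \eps\}$ has positive $\mu$-measure. Taking balls $U_n$ of radius $1/n$ about $z_0$, splitting according to the sign of $P_f - \ell_f$, and applying pigeonhole over the two signs yields a fixed sign (say $+$) and a subsequence $(n_k)$ with $\mu(A_k) > 0$, where $A_k := \{z \in U_{n_k} : P_f(z) \geq \ell_f + \eps\}$; here $z_0 \in \supp \mu$ is exactly what guarantees that these sets can be of positive measure. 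Let $\xi^k$ have law $\mu(\,\cdot \cap A_k)/\mu(A_k)$: its density with respect to $\mu$ is a normalised indicator, so (B1) holds, and $\xi^k \in U_{n_k}$ almost surely with $U_{n_k} \downarrow \{z_0\}$, so (B2) holds. Then $\E{P_f(\xi^k)} = \mu(A_k)^{-1}\int_{A_k} P_f\, \mathrm d\mu \geq \ell_f + \eps$, contradicting $\E{P_f(\xi^k)} \to \ell_f$. Hence the essential limit exists and equals $\ell_f$ for every $f \in \mC_b(\bE)$.

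To upgrade these scalar essential limits to weak continuity of the measure-valued map, I would fix a metric $d$ metrising weak convergence on the Polish space $\cP(\bE)$, built from a countable convergence-determining family $\{f_m\} \subset \mC_b(\bE)$, so that $d(p(\cdot,z),\nu) = \sum_m 2^{-m}(1+\|f_m\|_\infty)^{-1}|P_{f_m}(z) - \int f_m\,\mathrm d\nu|$. Given $\eps > 0$, truncate the series at some $M$, apply the previous step to each of the finitely many indices $m \leq M$ to obtain neighbourhoods $U_m$ on which $|P_{f_m} - \int f_m\,\mathrm d\nu|$ is small $\mu$-a.e., and intersect them: on $U := \bigcap_{m \leq M} U_m$ one gets $d(p(\cdot,z),\nu) < \eps$ for $\mu$-a.e. $z \in U$. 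Thus $d(p(\cdot,z),\nu) \to 0$ in the $\mu$-essential sense as $z \to z_0$. Choosing $\eps = 1/j$ produces neighbourhoods $V_j \downarrow$ and $\mu$-null sets $N_j$ with $d(p(\cdot,z),\nu) < 1/j$ off $N_j$ on $V_j$; redefining $p$ to equal $\nu$ on $N := \bigcup_j N_j$ and at $z_0$ gives a version $\tilde p$ with $\tilde p(\cdot,z_k) \to \nu$ along every sequence $z_k \to z_0$, hence continuous at $z_0$, and with $\tilde p(\cdot,z_0) = \nu$ as claimed in the last sentence. If $z_0$ is a $\mu$-atom, then $\{z_0\}$ is non-null and the essential-limit inequalities already force $p(\cdot,z_0) = \nu$, so this redefinition changes $p$ only on a $\mu$-null set and is legitimate.

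The main obstacle is the direct implication, and within it the construction of the adversarial sequences $\xi^k$: one must concentrate mass on the ``bad'' level sets of $P_f$ while simultaneously preserving absolute continuity (B1) and convergence in distribution to $z_0$ (B2), which is precisely why normalised restrictions of $\mu$ to shrinking neighbourhoods intersected with those sets are the right choice and why $z_0 \in \supp \p^{\T(X)}$ is indispensable. The secondary technical point is the passage from countably many scalar essential limits to a single weakly continuous version, which is handled by the metrisability and separability of $\cP(\bE)$ together with the atom discussion above.
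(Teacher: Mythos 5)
Your proof is correct and takes essentially the same route as the paper's: the sufficiency is the continuous mapping theorem, and the necessity is established, exactly as in the paper, by testing against a countable convergence-determining family of bounded continuous functions, building adversarial sequences whose laws are normalized restrictions of $\p^{\T(X)}$ to the deviation sets intersected with shrinking balls around $z_0$ (these satisfy (B1)--(B2) but break the convergence), and then redefining $p$ to equal $\nu$ on the resulting countable union of null sets to obtain a version continuous at $z_0$. The differences are purely organizational (you argue per test function and by contradiction, the paper packages the patching step as a separate technical lemma and argues by contrapositive); only your side remark that $z_0 \in \supp \p^{\T(X)}$ is what makes the bad sets have positive measure is a slight misattribution, since that positivity comes from negating the essential-limit property, while the support condition is what makes the lemma non-vacuous and the value at $z_0$ well defined.
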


We postpone the proof of the lemma to Section~\ref{parag:proof_of_lem_continuity_and_convergence} in the appendix.

\begin{remark}
\label{rem:conditioning on a subset}
Definition~\ref{def:value_along_xi_n} extends the intuitive definition of the conditional distribution of $X$ given $\{X \in C\}$ as the weak limit 
\[
\P{ X \in \cdot \ | X \in C }= \lim_{ \eps \to 0 }\P{ X \in \cdot \ | X \in C_{\eps} },
\]
where $C$ is a closed subset of $\bE$ and $C_\eps$ denotes its $\eps$-extension, that is, 
$C_{\eps}=\left\{x \in \bE:\ d_{\bE}(C,x)<\eps \right\}$. We assume $\P{X \in C_\eps}>0$ for any $\eps>0$. Then $\T$ can be defined by $\T(x):=d_{\bE}(C,x)$. We note that $ \{X \in C\} = \{ \T(X)=0\}$ and $\{X \in C_\eps\} = \{ \T(X)<\eps\}$ for all $\eps >0$. 
The sequence $\{\xi^n\}$ could then be defined by
  \[
    \P{ \xi^n \in A }= \frac{1}{ \P{ \T(X)< \frac{1}{n} } }\int_{ A }   \I_{\left\{ x<\frac{1}{n} \right\}}\p^{\T(X)}(\mathrm dx), \quad A \in \B(\bE).
  \]
One can easily check that $\{\xi^n\}$ satisfies conditions (B1) and (B2) with $z_0=0$, and that
\begin{align*}
\E{\int_{\bE} f(x) p(\mathrm dx, \xi^n)}
= \int_{ \bE }   f(x)\P{ X \in \mathrm  dx |X \in C_{1/n} }.
\end{align*}
Therefore, the weak limit of the sequence $(
\P{ X \in \cdot \ |X \in C_{1/n} })_{n\geq 1}$ coincides with the measure $\law_{\{\xi^n\}}(X | \T(X)=0)$ if it exists.
\end{remark}

We next introduce an idea to build  a  conditional distribution of $X$ given $\{\T(X)=z_0\}$ along a sequence $\{\xi^n\}$.  The idea is to split the random element $X$ into two independent parts, $Y$ and $Z$, so that $Z$ has the same law as $\T(X)$. More precisely, we assume that there  exists a quadruple $(\bG,\Psi,Y,Z)$ satisfying the following conditions
\begin{enumerate}
\item [(P1)] $\bG$ is a measurable space;
\item [(P2)] $Y$ and $Z$ are independent random elements in $\bG$ and $\bF$, respectively;
\item [(P3)] $\Psi: \bG \times \bF \to \bE$ is a measurable map such that $\T(\Psi(Y,Z))=Z$ a.s.;
\item [(P4)] $X$ and $\Psi(Y,Z)$ have the same distribution.
\end{enumerate}

\begin{proposition}
\label{prop:existence_regular}
Let $(\bG,\Psi,Y,Z)$ be a quadruple satisfying (P1)-(P4). 
The map $p$ defined by
\begin{equation}
\label{eq_def_p}
p(A,z):=\P{\Psi(Y,z) \in A }, \quad A \in \B(\bE), \ z \in \bF
\end{equation}
is a regular conditional probability of $X$ given $\T(X)$. 

Moreover, if $\{\xi^n\}_{n \geq 1}$ is a sequence of random elements in $\bF$ independent of $Y$ and satisfying (B1) and (B2) of Definition~\ref{def:value_along_xi_n}, 
then  $\Psi(Y, \xi^n)$ converges in distribution to the measure $\law_{\{\xi^n\}}(X | \T(X)=z_0)$.
\end{proposition}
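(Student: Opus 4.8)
The plan is to establish the two assertions in turn: first that the formula~\eqref{eq_def_p} does define a regular conditional probability of $X$ given $\T(X)$, and then, using that fact, to identify the weak limit of $\Psi(Y,\xi^n)$. Throughout I will write $\p^Y$, $\p^Z$ for the laws of $Y$ and $Z$, and exploit that, by (P4) and (P3), $\p^{\T(X)}=\p^Z$.

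For the first assertion I would check the three defining properties of Definition~\ref{def:regular_conditional_probability}. For fixed $z\in\bF$ the set function $p(\cdot,z)$ is just the law of the random element $\Psi(Y,z)$, hence a probability measure on $(\bE,\B(\bE))$. For fixed $A\in\B(\bE)$ the measurability of $z\mapsto p(A,z)=\int_{\bG}\I_A(\Psi(y,z))\,\p^Y(\mathrm dy)$ would follow from the joint measurability of $\Psi$ together with the standard Fubini--Tonelli fact that an integral depending measurably on a parameter is measurable in that parameter. The crux is the disintegration identity. For $A\in\B(\bE)$ and $B\in\B(\bF)$ I would compute
\[
\P{X\in A,\ \T(X)\in B}=\P{\Psi(Y,Z)\in A,\ \T(\Psi(Y,Z))\in B}=\P{\Psi(Y,Z)\in A,\ Z\in B},
\]
using (P4) in the first equality and (P3) in the second. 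Since $Y$ and $Z$ are independent by (P2), the pair $(Y,Z)$ has law $\p^Y\otimes\p^Z$, so Fubini gives $\int_B\big(\int_{\bG}\I_A(\Psi(y,z))\,\p^Y(\mathrm dy)\big)\,\p^Z(\mathrm dz)=\int_B p(A,z)\,\p^Z(\mathrm dz)$. Because $\p^Z=\p^{\T(X)}$, this is exactly the required identity, so $p$ is a regular conditional probability of $X$ given $\T(X)$.

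For the second assertion the key observation is that, since $\xi^n$ is independent of $Y$, integrating $f$ against $p(\cdot,\xi^n)$ amounts to averaging $f\circ\Psi$ over $Y$ with $\xi^n$ frozen: for every $f\in\mC_b(\bE)$ one has $\int_{\bE}f(x)\,p(\mathrm dx,\xi^n)=\E{f(\Psi(Y,\xi^n))\mid\xi^n}$ and therefore
\[
\E{\int_{\bE}f(x)\,p(\mathrm dx,\xi^n)}=\E{f(\Psi(Y,\xi^n))}.
\]
By Definition~\ref{def:value_along_xi_n} the left-hand side converges to $\int_{\bE}f\,\mathrm d\nu$ with $\nu=\law_{\{\xi^n\}}(X\mid\T(X)=z_0)$. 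Hence $\E{f(\Psi(Y,\xi^n))}\to\int_{\bE}f\,\mathrm d\nu$ for all $f\in\mC_b(\bE)$, which is precisely the statement that $\Psi(Y,\xi^n)$ converges in distribution to $\nu$.

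I do not expect a genuine obstacle here: the argument is bookkeeping with independence and Fubini. The only points that demand a little care are the parameter-measurability of $z\mapsto p(A,z)$ and the clean reduction of $\int_{\bE}f(x)\,p(\mathrm dx,\xi^n)$ to the conditional expectation $\E{f(\Psi(Y,\xi^n))\mid\xi^n}$, which is exactly where the hypothesis that $\{\xi^n\}$ is independent of $Y$ is used.
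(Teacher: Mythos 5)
Your proof is correct and takes essentially the same approach as the paper's: the same (P4)--(P3)--(P2) chain of equalities for the disintegration identity (R3), the same observation that $\p^Z=\p^{\T(X)}$, and the same key identity $\E{\int_{\bE} f(x)\,p(\mathrm dx,\xi^n)}=\E{f(\Psi(Y,\xi^n))}$ derived from the independence of $\xi^n$ and $Y$. The only cosmetic difference is that you obtain this identity by conditioning on $\xi^n$ (freezing lemma), whereas the paper invokes the a.s.\ uniqueness of regular conditional probabilities (Proposition~\ref{prop:kallenb}) together with (B1) and Fubini's theorem; both arguments rest on the same facts.
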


\begin{proof} 
Since $\Psi$ is measurable, $p$ defined by~\eqref{eq_def_p} satisfies properties (R1) and (R2) of Definition~\ref{def:regular_conditional_probability}. Furthermore, for every  $A \in \B(\bE)$ and $B \in \B(\bF)$
\begin{align*}
 \P{X \in A,\ \T(X) \in B} 
&\overset{(P4)}{=} \P{\Psi(Y,Z) \in A,\ \T(\Psi(Y,Z)) \in B} \\
&\overset{(P3)}{=} \P{\Psi(Y,Z) \in A,\ Z \in B}  \\
&\overset{(P2)}{=} \int_B p(A,z)  \p^Z (\mathrm dz).
\end{align*}
Moreover, since $X$ and $\Psi(Y,Z)$ have the same law, $\T(X)$ and $Z=\T(\Psi(Y,Z))$ have the same law too, so $\p^Z=\p^{\T(X)}$. This concludes the proof of (R3). 

Let $f \in \mC_b(\bE)$.
By~\eqref{eq_def_p} and Proposition~\ref{prop:kallenb}, we know that for any regular conditional probability $p$ of $X$ given $\T(X)$, the equality $\int_\bE f(x) p(\mathrm dx,z)= \E{ f(\Psi(Y, z))}$ holds for $\p^{\T(X)}$-almost all $z \in \bF$. It also holds $\p^{\xi^n}$-almost everywhere by Property (B1). 
By independence of $\xi^n$ and $Y$ and Fubini's theorem, 
\begin{align*}
\E{f(\Psi(Y, \xi^n))}
= \int_\bF \E{ f(\Psi(Y, z))}  \p^{\xi^n} (\mathrm dz)
&=  \int_\bF \int_\bE f(x) p(\mathrm dx,z)  \p^{\xi^n} (\mathrm dz).
\end{align*}
By~\eqref{equ_value_of_p_along_xi_n}, the last term tends to $\int_\bE f(x) \nu (\mathrm dx)$, where $\nu=\law_{\{\xi^n\}}(X | \T(X)=z_0)$. This concludes the proof of the convergence in distribution. 
\qed
\end{proof}

\section{Precise statement of the main result}
\label{sec:properties_statement}

In the introduction, we announced the construction of several objects, including a modified massive Arratia flow (MMAF) and a non-coalescing remainder map~$\T$. The main part of this construction will be the definition of an orthonormal basis of $L_2[0,1]$ which is tailored for the MMAF. 
In this section, we will follow the  steps (S1)-(S4) from the introduction and finally, we will state again Theorem~\ref{theo:main} in a more precise form, see Theorem~\ref{theo:infinite_dim}.

\subsection{MMAF and set of coalescing paths}
\label{parag:mmaf_coal}

In this section, we define the set $\coal$ of coalescing trajectories in an infinite-dimensional space and we recall important properties of the MMAF introduced in Definition~\ref{def:mmaf} to show that it takes values almost surely in $\coal$. Since they are not the central issue of this paper, the proofs of this section will be succinct, but we will refer to previous works or to the appendix for the detailled versions.

Fix $g$ belonging to the set $\ltp$ that consists of all non-decreasing \cdl functions $g:(0,1) \to \R$ satisfying $\int_0^1 |g(u)|^{2+\eps} \mathrm du <\infty$ for some $\eps >0$.  
Let $\St$ denote the set of non-decreasing step functions $f:[0,1) \to \R$ of the form
\begin{equation}
\label{step_function}
f = \sum_{j=1}^n f_j \mathds 1_{\pi_j},
\end{equation}
where $n \geq 1$, $f_1 < \dots < f_n$ and $\{\pi_1, \dots \pi_n\}$ is an ordered partition of $[0,1)$ into 
half-open intervals of the form $\pi_j=[a_j,b_j)$.
The natural number $n$ is denoted by $N(f)$ and is by definition finite for every $f \in \St$.
Recall that $L_2:=L_2[0,1]$ and that $\Li$ is the subset of $L_2$ 
 consisting of all functions which have non-decreasing versions.

\begin{definition}
\label{definition_coal}
We  define $\coal$ as the set of functions $y$ from $\mC([0,\infty),\Li)$ such that
\begin{enumerate}
\item[(G1)] $y$ has a version in $D((0,1), \mC[0,\infty))$, the space of \cdl functions from $(0,1)$ to $\mC([0,\infty), \R)$; 
\item [(G2)] $y_0= g$;
\item [(G3)] for each $t>0$, $y_t \in \St$;
\item [(G4)] for each $u,v \in (0,1)$ and $s\geq 0$, $y_s(u)=y_s(v)$ implies $y_t(u)=y_t(v)$ for every $t\geq s$;
\item [(G5)]  $t \mapsto N(y_t)$, $t\geq 0$, is a \cdl non-increasing integer-valued function with jumps of height one and which is constant equal to $1$ for sufficiently large time.  
\end{enumerate}
\end{definition}

We can interpret $y$ as a deterministic particle system, where $y_t(u)$, $t \geq 0$,  describes the trajectory of a particle labeled by $u$. Condition (G3) means that there is only a finite number of particles at each positive time. By Condition (G4), two particles coalesce when they meet. Moreover, by Condition (G5), there can be at most one coalescence at each time, and the number of particles is equal to one for large time.

Note that, according to Lemma~\ref{lemma:measurability_step} in appendix, the set $\coal$ is measurable in $\mC([0,\infty),\Li)$. We will also consider $\coal$ as a metric subspace of $\mC([0,\infty),\Li)$.

Recall the following existence property of modified massive Arratia flow. 

\begin{proposition}
\label{prop:exis_mmaf}
Let $g \in \ltp$. 
There exists a MMAF starting at $g$. 
\end{proposition}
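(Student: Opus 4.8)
The plan is to obtain the flow by approximation, constructing first the case of step-function initial data and then passing to a limit. First I would approximate the initial condition: since $g \in \ltp$ is non-decreasing and has finite $(2+\eps)$-th moment, one can choose a sequence $g^{(n)} \in \St$ of non-decreasing step functions with $g^{(n)} \to g$ in $L_2[0,1]$ and $\sup_n \int_0^1 |g^{(n)}(u)|^{2+\eps}\,\mathrm du < \infty$, e.g.\ by averaging $g$ over a refining sequence of partitions of $[0,1)$.

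Second, for each step function $g^{(n)}=\sum_k x_k \I_{\pi_k}$ I would construct a MMAF $\Y^{(n)}$ starting at $g^{(n)}$ by hand. One starts $N(g^{(n)})$ particles at the values $x_k$ with masses $\Leb(\pi_k)$, lets each particle $\y_k$ evolve as an independent Brownian motion with diffusion rate equal to the reciprocal of its current mass, and merges two particles into one of summed mass at the instant they meet. Since continuous martingales do not produce triple collisions almost surely, $N$ decreases by one at each collision time and becomes $1$ in finite time; properties (E1)--(E4) for $\Y^{(n)}$ then follow directly from this explicit coalescing SDE, the ordering (E3) holding because one-dimensional coalescing diffusions cannot cross.

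Third, I would establish moment and continuity estimates uniform in $n$. The center of mass $\int_0^1 \Y^{(n)}_t(u)\,\mathrm du$ is a martingale, and from (E4) one reads off the identity $\int_0^1 \mathrm d u / m(u,t) = N(\Y^{(n)}_t)$, which gives $\E{\int_0^1 |\Y^{(n)}_t(u)|^{2}\,\mathrm du} = \|g^{(n)}\|_{L_2}^2 + \E{\int_0^t N(\Y^{(n)}_s)\,\mathrm ds}$; the crux is to bound $\E{\int_0^t N(\Y^{(n)}_s)\,\mathrm ds}$ uniformly in $n$, which is exactly where the strict $(2+\eps)$-integrability of $g$ (rather than plain $L_2$) is used, since it controls the rate at which spread-out particles coalesce and prevents mass from escaping to infinity. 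Together with a Kolmogorov-type control of the time-increments and Helly-type compactness for non-decreasing functions in the $u$-variable, this yields tightness of $\{\Y^{(n)}\}$ in $\mC([0,\infty),\Li)$, with limit points admitting a \cdl-in-$u$ version so that they live in $\DC$, hence in $\coal$.

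Finally, I would extract a weakly convergent subsequence $\Y^{(n)} \Rightarrow \Y$ and identify the limit as a MMAF starting at $g$. Conditions (E2) and (E3) pass to the limit directly, and the uniform integrability from the $(2+\eps)$-bound upgrades the martingale property (E1) to the limit. The hard part is (E4): one must show that the joint quadratic variation of $\Y$ has exactly the form $\int_0^t \I_{\{\tau_{u,v}\le s\}}/m(u,s)\,\mathrm ds$, which requires that the mass functions $m^{(n)}(u,\cdot)$ and the collision times converge to those of $\Y$. I expect this to be the main obstacle, and would treat it through the martingale characterisation: for suitable test functionals the predictable brackets of $\Y^{(n)}$ converge, the uniform $(2+\eps)$-control transfers this to the brackets of $\Y$, and the mass is pinned down by combining the identity $\int_0^1 \mathrm d u/m(u,t)=N(\Y_t)$ with the lower semicontinuity of $m$ under weak limits. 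I note that, this being a recollection of an established fact, the construction follows the earlier works cited above, to which the detailed verification can be referred.
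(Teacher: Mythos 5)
Your outline coincides with the paper's approach: the paper proves this proposition simply by citing \cite[Theorem 1.1]{Konarovskyi:EJP:2017}, whose proof is exactly the finite-particle approximation scheme you describe --- step-function initial data in $\St$, coalescing heavy Brownian particles, uniform moment bounds exploiting the $(2+\eps)$-integrability of $g$ (via control of $\E{N(\Y_t)}$), tightness, and identification of the limit through the martingale properties (E1)--(E4). Since you, like the paper, ultimately defer the delicate verification (in particular the passage of (E4) to the limit) to that established reference, your proposal is correct and essentially the same as the paper's one-line proof.
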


\begin{proof}
See~\cite[Theorem 1.1]{Konarovskyi:EJP:2017}.
\qed
\end{proof}

Equivalently, we may also define a MMAF as an $\Li$-valued process, in the following sense. For every $f \in \Li$, $\pr_f$ denotes the orthogonal projection operator in $L_2$ onto the subspace of $\sigma(f)$-measurable functions. 

\begin{lemma}
\label{lemma:equiv_def_mmaf}
Let $g \in \ltp$ and  $\{\Y(u,t),\ u \in (0,1),\ t \in [0,\infty)\}$ be a MMAF starting at $g$. Then the process $\Y_t$, $t\geq 0$, defined by $\Y_t:=\Y(\cdot,t)$, $t\geq 0$, satisfies
\begin{enumerate}[noitemsep]
  \item [(M1)] $\Y_t$, $t\geq 0$, is a continuous $\Li$-valued process with $\E{ \|\Y_t\|_{L_2}^2 }<\infty$, $t\geq 0$;

  \item [(M2)] for every $h \in L_2$ the $L_2$-inner product $(\Y_t,h)_{L_2}$, $t\geq 0$, is a continuous square integrable martingale with respect to the filtration generated by $\Y_t$, $t\geq 0$, that trivially coincides with $(\F_t^{\Y})_{t\geq 0}$;

  \item [(M3)] the joint quadratic variation of $(\Y_t,h_1)_{L_2}$, $t\geq 0$, and $(\Y_t,h_2)_{L_2}$, $t\geq 0$, equals
  $\left\langle (\Y_{\cdot },h_1)_{L_2},(\Y_{\cdot },h_2)_{L_2} \right\rangle_t=\int_{ 0 }^{ t } (\pr_{\Y_s}h_1,h_2)_{L_2}\mathrm ds$, $t\geq 0$. 
\end{enumerate}
Furthermore, if a process $\Y_t$, $t\geq 0$, starting at $g$ satisfies (M1)-(M3), then there exists a MMAF $\{\Y(u,t),\ u \in (0,1),\ t \in [0,\infty)\}$ such that $\Y_t=\Y(\cdot,t)$ in $L_2$ a.s. for all $t \geq 0$. 
\end{lemma}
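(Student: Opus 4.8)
The plan is to prove the two implications separately. The forward direction, deriving (M1)--(M3) from (E1)--(E4), is essentially a computation once one identifies the projection operator $\pr_{\Y_s}$ explicitly in terms of the cluster structure of the flow; the backward direction requires producing a regular version of the abstract $L_2$-valued process and is where the prior results on the MMAF must be invoked.

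First I would treat the forward direction. For (M1), continuity of $t\mapsto\Y_t$ in $L_2$ follows from the continuity of each coordinate martingale $\Y(u,\cdot)$ in (E1) together with the ordering (E3) and dominated convergence, while the bound $\E{\|\Y_t\|_{L_2}^2}<\infty$ I would obtain from
\[
\E{\|\Y_t\|_{L_2}^2}=\int_0^1\E{\Y(u,t)^2}\,\mathrm du=\|g\|_{L_2}^2+\E{\int_0^t N(\Y_s)\,\mathrm ds},
\]
using (E2), the martingale property (E1), and (E4) to compute $\E{\langle\Y(u,\cdot)\rangle_t}=\E{\int_0^t m(u,s)^{-1}\,\mathrm ds}$, then Tonelli together with the identity $\int_0^1 m(u,s)^{-1}\,\mathrm du=N(\Y_s)$ (valid since $\Y_s\in\St$ for $s>0$); finiteness of $\E{\int_0^t N(\Y_s)\,\mathrm ds}$ is the known particle-number estimate for the MMAF with $g\in\ltp$. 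Property (M2) then follows by writing $(\Y_t,h)_{L_2}=\int_0^1\Y(u,t)h(u)\,\mathrm du$ and interchanging the stochastic and spatial integrals, justified by the $L_2$ bound just obtained, so that the martingale property is inherited from (E1).

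The heart of the forward direction is (M3). I would first record that for $f\in\St$ the operator $\pr_f$ is cluster averaging: if $f=\sum_j f_j\I_{\pi_j}$ with distinct values, then $(\pr_f h)(u)=\Leb(\pi_j)^{-1}\int_{\pi_j}h\,\mathrm d\Leb$ for $u\in\pi_j$. Since $\Y_s\in\St$ for every $s>0$ and since, by coalescence, $\tau_{u,v}\leq s$ is equivalent to $u$ and $v$ lying in the same cluster of $\Y_s$, the inner integral $\int_0^1 h_2(v)\,\I_{\{\tau_{u,v}\leq s\}}\,m(u,s)^{-1}\,\mathrm dv$ equals exactly $(\pr_{\Y_s}h_2)(u)$. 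Multiplying by $h_1(u)$, integrating in $u$, and applying Fubini to the bilinear expansion of $\langle(\Y_\cdot,h_1)_{L_2},(\Y_\cdot,h_2)_{L_2}\rangle_t$ obtained from (E4) then yields $\int_0^t(\pr_{\Y_s}h_1,h_2)_{L_2}\,\mathrm ds$, which is (M3). The only care needed is joint measurability of the cluster map $(u,s)\mapsto\pi(u,s)$ and the integrability for Fubini, both of which follow from $\Y$ taking values in $\coal$ and the $L_2$ bound above.

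For the backward direction, given $\Y_t$ satisfying (M1)--(M3), each $\Y_t\in\Li$ admits a non-decreasing càdlàg representative, giving a candidate $\Y(\cdot,t)$; the task is to choose these representatives jointly so that the resulting object lies in $\DC$ and satisfies (E1)--(E4). I expect this to be the main obstacle, and I would handle it via the construction already carried out in the cited works~\cite{Konarovskyi:AP:2017,Konarovskyi:EJP:2017}: one extracts a countable dense family of labels, uses (M2)--(M3) with $h$ ranging over indicators of intervals to show that the integrated profile $(\Y_t,\I_{\pi})_{L_2}$ is a continuous martingale with the bracket forced by (E4), and then fills in all labels by monotonicity and right-continuity. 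Once a version in $\DC$ is available, (E2) is the initial condition in (M1), (E3) is monotonicity in $u$, and (E1) and (E4) are read off from (M2)--(M3) by specializing $h$ to indicators and differencing, while the structural properties (G3) and (G5) forcing $\Y_s\in\St$ come from the same prior analysis. The genuinely delicate point throughout is the passage between the pointwise-in-$u$ description (E1)--(E4) and the functional-analytic description (M1)--(M3)—that is, constructing and controlling the regular version—rather than the algebra of the projection identity.
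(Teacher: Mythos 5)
Your proposal is correct, but on the forward implication it takes a genuinely different route from the paper: the paper's own proof is essentially a pair of citations --- integrability in (M1) is delegated to Lemma~\ref{lem_finiteness_of_l2_norm_of_mmaf} in the appendix, and the martingale and bracket properties to \cite[Lemma~3.1]{Konarovskyi:CPAM:2019} --- whereas you reconstruct the implication by direct computation. Your key identities are exactly the content hidden inside those citations: $\int_0^1 m(u,s)^{-1}\,\mathrm du=N(\Y_s)$, the cluster-averaging form of $\pr_{\Y_s}$ when $\Y_s\in\St$, and the identification of the coalescence cluster $\{v:\ \tau_{u,v}\leq s\}$ with the step of $\Y_s$ containing $u$; all three are right. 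What the paper's citation buys is precisely the step you pass over most lightly: the interchange of the quadratic-variation bracket with the $\mathrm du$-integrals, i.e. that $\left\langle (\Y_{\cdot},h_1)_{L_2},(\Y_{\cdot},h_2)_{L_2} \right\rangle_t$ equals the double $\mathrm du\,\mathrm dv$-integral of the pointwise brackets from (E4). This is not literally Fubini (brackets are not pointwise integrals), but it does follow from a conditional-Fubini argument: show that $(\Y_t,h_1)_{L_2}(\Y_t,h_2)_{L_2}-\int_0^t(\pr_{\Y_s}h_1,h_2)_{L_2}\,\mathrm ds$ is a martingale, the required integrability coming from your bound $\E{\|\Y_t\|_{L_2}^2}<\infty$; so your plan goes through, and the same remark applies to (M2), where the tool is conditional Fubini rather than an interchange of ``stochastic and spatial integrals.'' Two caveats: (i) for continuity in (M1), the ordering (E3) does not supply an integrable dominating function on the open interval $(0,1)$ (there are no extreme labels); instead apply Doob's inequality coordinatewise and integrate, using your $N(\Y_s)$ identity. (ii) Your forward direction is not citation-free either: the facts $\Y_s\in\St$ for $s>0$, the coalescing property, and the estimate $\E{N(\Y_t)}\leq C_{\eps,T}\,t^{-1/2}\left(1+\|g\|_{L_{2+\eps}}\right)$ are all imported from \cite{Konarovskyi:EJP:2017} (as the paper records in lemmas~\ref{lem:mmaf_belongs_to_coal} and~\ref{lem:stopping_time_mmaf}), so in effect you trade one citation for a lighter set of them, gaining a self-contained and more transparent derivation of (M1)--(M3). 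On the converse implication you and the paper do the same thing: both delegate the construction of the regular version in $\DC$ and the verification of (E1)--(E4) to \cite[Theorem~6.4]{Konarovskyi:EJP:2017}.
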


\begin{proof}
The first part of the statement follows directly  from Lemma~\ref{lem_finiteness_of_l2_norm_of_mmaf} in appendix, for Property (M1), and from~\cite[Lemma~3.1]{Konarovskyi:CPAM:2019}, for properties (M1) and (M2). 
As regards the second part of the lemma, it is proved in~\cite[Theorem~6.4]{Konarovskyi:EJP:2017}.
\qed
\end{proof}

According to Lemma~\ref{lemma:equiv_def_mmaf}, we may identify the modified massive  Arratia flow $\{\Y(u,t),\ u \in (0,1),\ t \in [0,\infty)\}$ and the $\Li$-valued martingale $\Y_t$, $t\geq 0$, using both notations for the same object. 

\begin{lemma}
\label{lem:mmaf_belongs_to_coal}
The process $\Y_t$, $t\geq 0$, belongs almost surely to $\coal$.
\end{lemma}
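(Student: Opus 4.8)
The plan is to verify that a MMAF $\Y$, viewed as an element of $\mC([0,\infty),\Li)$, satisfies almost surely the five defining properties (G1)--(G5) of $\coal$. The strategy is to translate each property of $\coal$ into a statement about the MMAF and to invoke the known structural results from Definition~\ref{def:mmaf} together with the references in the excerpt. Properties (G1) and (G2) are essentially immediate: (G1) holds because, by hypothesis, $\Y$ is a random element of $\DC=D((0,1),\mC[0,\infty))$, so it has by construction a \cdl version in labels $u$; and (G2) is exactly property (E2), namely $\Y(u,0)=g(u)$ for all $u$. Property (G4), the coalescence property, follows directly from the monotonicity and the joint quadratic variation structure: once two particles $\Y(u,\cdot)$ and $\Y(v,\cdot)$ meet at a time $s$, the ordering property (E3) forces $\Y(u,t)=\Y(v,t)$ for all $t\geq s$, since any later separation would violate $\Y(u,t)\leq \Y(v,t)$ combined with the continuity of the martingales.

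The heart of the argument, and the main obstacle, lies in properties (G3) and (G5), which assert that at each positive time there are only finitely many distinct particle positions, that $t\mapsto N(\Y_t)$ is \cdl non-increasing with unit jumps, and that it eventually equals $1$. This is precisely the content of the ``clustering'' behaviour of the MMAF and cannot be read off from (E1)--(E4) directly. I would establish (G3) by using property (E4): the diffusion rate of a cluster containing a label $u$ is $1/m(u,t)$, and the total mass is conserved and bounded below, so particles aggregate. More concretely, I expect to invoke the known result (see~\cite[Theorem~6.4]{Konarovskyi:EJP:2017} and the references collected in Lemma~\ref{lemma:equiv_def_mmaf}) that a MMAF starting at $g \in \ltp$ coincides, after identification, with the particle system constructed in those works, for which finiteness of the number of clusters at each positive time and the unit-jump, eventually-trivial behaviour of $N$ are already proven. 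The non-increasing property of $N$ is a consequence of (G4), since clusters can only merge, never split.

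The cleanest way to organise the proof is therefore to cite the equivalence given by Lemma~\ref{lemma:equiv_def_mmaf}, which identifies $\Y$ with the $\Li$-valued martingale satisfying (M1)--(M3), and then to quote the structural results from~\cite{Konarovskyi:EJP:2017,Konarovskyi:AP:2017} establishing that this process is a finite coalescing system for every $t>0$. I would remark that the continuity in $\mC([0,\infty),\Li)$ required to even speak of $\Y$ as an element of that space follows from (M1), while the measurability of $\coal$ needed to make ``$\Y_t \in \coal$ almost surely'' a meaningful event is guaranteed by Lemma~\ref{lemma:measurability_step}. The main technical subtlety I anticipate is ensuring that the almost-sure finiteness of clusters holds simultaneously for all $t>0$ and not merely for each fixed $t$; this is handled by the \cdl monotonicity of $N$, since a non-increasing integer-valued function is automatically finite on $(0,\infty)$ as soon as it is finite at arbitrarily small positive times, and the latter follows from the mass lower bound. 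Once all five properties are verified on a common almost-sure event, the conclusion $\Y_t \in \coal$ almost surely is immediate.
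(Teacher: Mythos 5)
Your overall strategy --- verifying (G1)--(G5) one by one, with (G1)--(G2) immediate from the construction and the structural properties outsourced to~\cite{Konarovskyi:EJP:2017} --- is exactly the paper's route (the paper cites propositions~6.2 and~2.3 of~\cite{Konarovskyi:EJP:2017} for (G3) and (G4), and its appendix Lemma~\ref{lem_property_of_ny} for (G5)). However, your inline justification of (G4) is wrong as stated. Ordering plus continuity does not prevent two particles from separating after they meet: if $u<v$, the paths can touch at time $s$ and then move apart with $\Y(u,t)<\Y(v,t)$ for $t>s$, which violates nothing in (E3). The actual mechanism --- and what \cite[Proposition~2.3]{Konarovskyi:EJP:2017} establishes --- is that $\Y(v,\cdot)-\Y(u,\cdot)$ is a continuous martingale by (E1), is non-negative by (E3), and a non-negative continuous martingale is absorbed at zero once it hits zero. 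The martingale property is essential here, not merely the ordering.

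The second soft spot is (G5). You assert that the unit-jump property of $t\mapsto N(\Y_t)$ and the eventual collapse to a single cluster are ``already proven'' in the cited works, but you never identify the key ingredient. That jumps have height exactly one (equivalently, that the coalescence times $\tau^{\Y}_k$ are strictly decreasing) is \emph{not} a consequence of ``clusters can only merge''; it requires the fact that, almost surely, three or more independent Brownian particles never meet at the same point at the same time, combined with an induction over the coalescence events --- this is precisely how the paper handles it in Lemma~\ref{lem_property_of_ny}, stated and proved separately in the appendix rather than quoted from the literature. Likewise $\P{\tau^{\Y}_1<\infty}=1$ needs its own argument there. The rest of your plan (measurability of $\coal$ via Lemma~\ref{lemma:measurability_step}, monotonicity of $N$ upgrading finiteness at each fixed $t$ to finiteness simultaneously for all $t>0$) is sound and consistent with the paper, so the proof would be complete once these two points are repaired.
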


\begin{proof}
By construction, the process satisfies properties (G1) and (G2). Properties (G3) and (G4) were proved in~\cite{Konarovskyi:EJP:2017}, propositions~6.2 and~2.3 ibid, respectively. Property (G5) is stated in Lemma~\ref{lem_property_of_ny} in appendix. 
\qed
\end{proof}

\subsection{MMAF and cylindrical Wiener process}
\label{parag:mmaf_cylindrical}

The goal of this section is the precise construction of a cylindrical Wiener process $\W$ for which the equality~\eqref{intro:equ_couple} holds for a given MMAF $\Y$. This will complete step (S2) from the introduction.

For every $f \in \Li$, let $L_2(f)$ denote the subspace of $L_2$ consisting of $\sigma(f)$-measurable functions. In particular, if $f$ is of the form~\eqref{step_function}, then $L_2(f)$ consists of all step functions which are constant on each $\pi_j$. For any $f \in \Li$, let
$\pr_f$ (resp.  $\pr_{f}^{\bot}$)  denote the orthogonal projection in $L_2$ onto $L_2(f)$ (resp. onto $L_2(f)^{\bot}$). Moreover, 
for any progressively measurable process $\kappa_t$, $t\geq 0$, in $L_2$ and for any cylindrical Wiener process $B$ in $L_2$, we denote
\[
\int_0^t \kappa_s \cdot\mathrm dB_s:=\int_0^t K_s \mathrm dB_s.
\]
where $K_t=(\kappa_t,\cdot)_{L_2}$, $t\geq 0$.

\begin{proposition}
\label{prop_mmaf_and_cylindrical_brownian_motion}
Let $g \in \ltp$ and $\Y_t$, $t\geq 0$, be a MMAF starting at $g$. 
Let $B_t$, $t\geq 0$, be a cylindrical Wiener process  in $L_2$ starting at $0$ defined on the same probability space and independent of $\Y$. Then the process $\W_t$, $t \geq 0$, defined by 
  \begin{equation} 
  \label{equ_cylindrical_wiener_process_theta}
  \W_t :=\Y_t +\int_{ 0 }^{ t } \pr_{\Y_s}^{\bot} \mathrm dB_s, \quad t \geq 0,
  \end{equation}
  is a cylindrical Wiener process in $L_2$ starting at $g$, where equality~\eqref{equ_cylindrical_wiener_process_theta} should be understood\footnote{The process $\pr_{\Y_t}^\bot$, $t\geq 0$, does not take values in the space of Hilbert-Schmidt operators in $L_2$. Therefore, the integral $\int_{ 0 }^{ t } \pr_{\Y_s}^{\bot} \mathrm dB_s$ is not well-defined but $h\mapsto \int_{ 0 }^{ t } \pr_{\Y_s}^{\bot} h \cdot \mathrm dB_s$ is.}
   as follows:
  \begin{align*}
 \W_t (h):=(\Y_t,h)_{L_2} +\int_{ 0 }^{ t } \pr_{\Y_s}^{\bot} h \cdot \mathrm dB_s, \quad t \geq 0, \ h \in L_2. 
\end{align*}
Moreover, $(\Y,\W)$ satisfies equation~\eqref{intro:equ_couple}.
\end{proposition}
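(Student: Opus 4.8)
The plan is to verify the three defining properties of Definition~\ref{def:cylind_W_p} for the centred process $\eta_t(h) := \W_t(h) - (g,h)_{L_2}$, and then to check equation~\eqref{intro:equ_couple} by a direct computation of stochastic integrals. Throughout I work with the filtration $(\F_t)$ generated jointly by $\Y$ and $B$. Since $B$ is independent of $\Y$, it remains a cylindrical Wiener process with respect to $(\F_t)$, so the integral $N_t^h := \int_0^t \pr_{\Y_s}^{\bot} h \cdot \mathrm dB_s$ is a well-defined continuous $(\F_t)$-local martingale; by (M2) of Lemma~\ref{lemma:equiv_def_mmaf}, $M_t^h := (\Y_t,h)_{L_2}$ is also a continuous $(\F_t)$-martingale. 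Linearity of $h \mapsto \W_t(h)$ (property i) is immediate, since $h\mapsto (\Y_t,h)_{L_2}$, $h \mapsto \pr_{\Y_s}^{\bot} h$ and the stochastic integral are all linear. Thus $\eta_t(h) = (M_t^h - (g,h)_{L_2}) + N_t^h$ is a continuous $(\F_t)$-local martingale started at $0$, and it remains to compute its covariation structure.

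For properties ii) and iii) I would compute the joint quadratic variations of the two summands. By (M3) of Lemma~\ref{lemma:equiv_def_mmaf}, $\langle M^{h_1},M^{h_2}\rangle_t = \int_0^t (\pr_{\Y_s} h_1,h_2)_{L_2}\,\mathrm ds$. By the isometry of the integral against $B$ together with the self-adjointness and idempotence of $\pr_{\Y_s}^{\bot}$, $\langle N^{h_1},N^{h_2}\rangle_t = \int_0^t (\pr_{\Y_s}^{\bot} h_1,\pr_{\Y_s}^{\bot} h_2)_{L_2}\,\mathrm ds = \int_0^t (\pr_{\Y_s}^{\bot} h_1,h_2)_{L_2}\,\mathrm ds$. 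Assuming the cross term $\langle M^{h_1},N^{h_2}\rangle_t = 0$ (treated below) and using $\pr_{\Y_s} + \pr_{\Y_s}^{\bot} = \id$, one gets $\langle \eta(h_1),\eta(h_2)\rangle_t = t\,(h_1,h_2)_{L_2}$. In particular $\langle \eta(h)\rangle_t = t\|h\|_{L_2}^2$, so L\'evy's characterisation shows $\eta_{\cdot}(h)$ is a Brownian motion started at $0$ (property ii), while the martingale property of $\eta_t(h_1)\eta_t(h_2) - \langle\eta(h_1),\eta(h_2)\rangle_t$ gives $\E{\eta_t(h_1)\eta_t(h_2)} = t(h_1,h_2)_{L_2}$ (property iii). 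Hence $\eta$ is a cylindrical Wiener process starting at $0$ and $\W$ one starting at $g$.

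The \emph{main obstacle} is the vanishing of the cross-variation $\langle M^{h_1},N^{h_2}\rangle$, that is, the orthogonality of the $\Y$-driven martingale and the $B$-driven integral; this is the only step that genuinely uses the independence of $\Y$ and $B$. I would show that $M^{h_1}N^{h_2}$ is an $(\F_t)$-martingale, whence the continuous finite-variation martingale $\langle M^{h_1},N^{h_2}\rangle$ starts at $0$ and must vanish. To see the martingale property, condition on the whole trajectory $\sigma(\Y)$: given $\sigma(\Y)$ the integrand $\pr_{\Y_s}^{\bot}h_2$ is deterministic and $B$ is still a cylindrical Wiener process, so for $s<t$ the increment $N_t^{h_2}-N_s^{h_2}$ has conditional mean zero, while $M_t^{h_1}$ is $\sigma(\Y)$-measurable; factoring it out and applying the tower property yields $\E{M_t^{h_1}N_t^{h_2}\mid \F_s} = M_s^{h_1}N_s^{h_2}$.

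Finally, for equation~\eqref{intro:equ_couple}, tested against $h \in L_2$ it reads $(\Y_t,h)_{L_2} - (g,h)_{L_2} = \int_0^t \pr_{\Y_s} h \cdot \mathrm d\W_s$. Writing the martingale part of $\W$ as $\mathrm d\Y_s + \pr_{\Y_s}^{\bot}\,\mathrm dB_s$ and using self-adjointness of the projections, I would split $\int_0^t \pr_{\Y_s} h \cdot \mathrm d\W_s = \int_0^t \pr_{\Y_s} h \cdot \mathrm d\Y_s + \int_0^t \pr_{\Y_s}^{\bot}\pr_{\Y_s} h \cdot \mathrm dB_s$. The second integral vanishes because $\pr_{\Y_s}^{\bot}\pr_{\Y_s} = 0$. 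For the first, the difference $\int_0^t (\pr_{\Y_s} h - h)\cdot \mathrm d\Y_s = -\int_0^t \pr_{\Y_s}^{\bot} h \cdot \mathrm d\Y_s$ is a local martingale whose quadratic variation, by (M3) extended to progressive integrands, equals $\int_0^t (\pr_{\Y_s}\pr_{\Y_s}^{\bot} h,\pr_{\Y_s}^{\bot} h)_{L_2}\,\mathrm ds = 0$; hence it is identically zero and $\int_0^t \pr_{\Y_s} h \cdot \mathrm d\Y_s = \int_0^t h\cdot \mathrm d\Y_s = (\Y_t,h)_{L_2} - (g,h)_{L_2}$, which is exactly equation~\eqref{intro:equ_couple} in weak form.
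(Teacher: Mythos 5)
Your proof is correct, but it takes a genuinely different route from the paper at the key step. The paper invokes the martingale representation theorem in infinite dimensions (\cite[Corollary~2.2]{Gawarecki:2011} applied to (M3)) to write $\Y_t = g + \int_0^t \pr_{\Y_s}\,\mathrm d\tilde B_s$ for a \emph{new} cylindrical Wiener process $\tilde B$, possibly on an extended probability space, which may be taken independent of $B$; then $\W_t(h)$ is a sum of stochastic integrals against two independent cylindrical Wiener processes, so the quadratic variations simply add up to $t\|h\|_{L_2}^2$ and L\'evy's characterization applies, the orthogonality of the two parts being automatic. You instead avoid the representation theorem entirely: you keep $(\Y_t,h)_{L_2}$ as a martingale in its own right via (M2)--(M3), and you prove the vanishing of the cross-variation $\langle M^{h_1},N^{h_2}\rangle$ by hand, conditioning on $\sigma(\Y)$ and using the independence of $B$ and $\Y$ --- which is in fact the same conditioning technique the paper uses elsewhere (e.g.\ in Lemma~\ref{lem:shifted_W_is_BM}). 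What each approach buys: the paper's argument is shorter once the cited machinery is granted, and its verification of equation~\eqref{intro:equ_couple} is immediate since every term is an integral against $\tilde B$ or $B$; your argument is more self-contained, stays on the original probability space, and replaces the representation theorem by the elementary observation that a $\sigma(\Y)$-measurable martingale and a $B$-integral with $\sigma(\Y)$-measurable integrand are orthogonal. Your treatment of~\eqref{intro:equ_couple} is also sound: killing $\int_0^t \pr^{\bot}_{\Y_s}h\cdot\mathrm d\Y_s$ by showing its quadratic variation $\int_0^t(\pr_{\Y_s}\pr^{\bot}_{\Y_s}h,\pr^{\bot}_{\Y_s}h)_{L_2}\,\mathrm ds$ vanishes is a clean substitute for the paper's use of the $\tilde B$-representation together with $\pr_{\Y_s}^2=\pr_{\Y_s}$. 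One small point you should make explicit: (M2) only gives that $(\Y_t,h)_{L_2}$ is a martingale with respect to $(\F_t^{\Y})$; upgrading this to the joint filtration $(\F_t)$ generated by $\Y$ and $B$ requires the same one-line independence argument you already use to keep $B$ a cylindrical Wiener process for $(\F_t)$ --- as written, you cite (M2) as if it directly delivered the stronger statement.
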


\begin{proof} 
It follows from Property (M3) and from~\cite[Corollary~2.2]{Gawarecki:2011} that   
 there exists a cylindrical Wiener process $\tilde{B}$ in $L_2$ starting at $0$ (possibly on an extended probability space also denoted  by $(\Omega,\F,\p)$) such that 
  \[
    \Y_t=g+\int_{ 0 }^{ t } \pr_{\Y_s} \mathrm d \tilde{B}_s, \quad t\geq 0. 
  \]
Moreover, we may assume that $\tilde{B}$ is independent of $B$. It is trivial that the map $\W_t:L_2 \to L_2(\Omega,\F,\p)$ defined by~\eqref{equ_cylindrical_wiener_process_theta} is linear.  Let $(\F_t)_{t\geq 0}$ be the natural filtration generated by $\tilde{B}$ and $B$. 
  Let us check that $\W_t(h)$, $t\geq 0$, is an $(\F_t)$-Brownian motion starting at $(g,h)_{L_2}$ with diffusion rate $\|h\|_{L_2}^2$ for any $h \in L_2$. Using the independence of $\tilde{B}$ and $B$, we have that $ \W_t (h)$, $t\geq 0$, is a continuous $(\F_t)$-martingale with quadratic variation 
  \begin{align*}
    \left\langle \W(h ) \right\rangle_t&= \int_{ 0 }^{ t } \|\pr_{\Y_s}h\|_{L_2}^2\mathrm ds+ \int_{ 0 }^{ t } \|\pr_{\Y_s}^{\bot}h\|_{L_2}^2\mathrm ds=\int_{ 0 }^{ t } \|h\|_{L_2}^2\mathrm ds=t\|h\|_{L_2}^2.  
  \end{align*}
This implies that $\W$ is a cylindrical Wiener process. 
  
Moreover, for every $h \in L_2$ and $t \geq 0$, 
\begin{align*}
\int_0^t \pr_{\Y_s} h \cdot \mathrm d \W_s
&= \int_0^t (\Y_s, \pr_{\Y_s} h)_{L_2} \mathrm ds
+ \int_0^t \pr_{\Y_s}^\bot \circ \pr_{\Y_s} h \cdot \mathrm d B_s \\
&= \int_0^t (  \Y_s, h)_{L_2} \mathrm ds = (  \Y_t, h)_{L_2} - (  g, h)_{L_2}.
\end{align*}
Therefore $\Y_t=g + \int_0^t \pr_{\Y_s} \mathrm d \W_s$, which is equality~\eqref{intro:equ_couple}.
\qed
\end{proof}

Note that it is not obvious whether each cylindrical Wiener process $\W$ in $L_2$ starting at $g$ and satisfying~\eqref{intro:equ_couple} is necessary of the form~\eqref{equ_cylindrical_wiener_process_theta}. Actually, this is the result of Theorem~\ref{theo:coupling} and will be proved in Section~\ref{sec:coupling}.

\subsection{Construction of non-coalescing remainder map}
\label{parag:construction_map_T}

\textit{Up to now and until the end of Section~\ref{sec:proof_main_theorem}, we fix  a strictly increasing function~$g$ in $\ltp$ and $\X:=(\Y, \W)$, where $\Y_t$, $t \geq 0$, is  a modified massive Arratia flow  starting at $g$ and $\W_t$, $t \geq 0$, is defined by~\eqref{equ_cylindrical_wiener_process_theta}.} In particular, the assumption on~$g$ implies that  $L_2(g)=L_2$. 
In this section, we consider step (S3) from the introduction.

Let us introduce for every $y \in \coal$ the corresponding coalescence times:
\begin{equation}
\label{def:tau}
\tau^{y}_k:= \inf \{ t\geq 0 :\ N(y_t) \leq k  \}, \quad k \geq 0.
\end{equation}
Since $g$ is a strictly increasing function, one has that $N(g)=+\infty$, and therefore, the family $\{\tau^y_k,\ k \geq 0\}$ is strictly decreasing for all $y \in \coal$, i.e.
\[
0 < \dots < \tau_2^y < \tau_1^y < \tau_0^y=+\infty,
\]
by Condition (G5). 

Now we are going to define an orthonormal basis $\{ e_{k}^y,\ k\geq 0 \}$ in $L_2$ which depends on $y \in \coal$.  Since $y_t$, $t\geq 0$, is an $L_2$-valued continuous function and $L_2(g)=L_2$ due to the strong increase of $g$, it is easily seen that the closure of $\bigcup_{ k=1 }^{ \infty } L_2(y_{\tau_{k}^y})$ coincides with $L_2$. Let $H_k^y$ be the orthogonal complement of $L_2(y_{\tau_{k}^y})$ in $L_2$, $k\geq 1$. 

\begin{lemma}
\label{lem:onb}
For each $y \in \coal$ there exists a unique orthonormal basis $\{e_l^y,\ l \geq 0\}$ of $L_2$ such that
\begin{enumerate}
  \item[1)] the family $\{ e_l^y,\ 0\leq l< k \}$ is a basis of $L_2(y_{\tau_{k}^y})$ for each $k\geq 1$;

\item[2)] $(e_l^y, \I_{[0,u]})_{L_2} \geq 0$ for every $u \in (0,1)$.
\end{enumerate}
Moreover, the family $\left\{ e_{l}^y,\ l\geq k \right\}$ is a basis of $H_k^y$ for each $k\geq 1$.
\end{lemma}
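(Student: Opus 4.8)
The plan is to exploit the coalescing structure of $y\in\coal$: the subspaces $L_2(y_{\tau_k^y})$ form a strictly increasing, one-dimension-at-a-time sequence whose union is dense in $L_2$, so the basis can be built by a Gram--Schmidt procedure, the only subtle point being the choice of sign forced by condition~2).

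First I would record the structural observation. For each $k\geq 1$ the function $y_{\tau_k^y}$ lies in $\St$ and, by the definition~\eqref{def:tau} of $\tau_k^y$ together with (G5), takes exactly $k$ distinct values; hence $L_2(y_{\tau_k^y})$ is $k$-dimensional and consists of the step functions constant on each of the $k$ intervals of the associated ordered partition. Condition (G4) (particles that meet stay together) forces the partition at time $\tau_k^y$ to be coarser than the one at $\tau_{k+1}^y$, and since by (G5) exactly one coalescence occurs in $(\tau_{k+1}^y,\tau_k^y]$, the former partition is obtained from the latter by merging two \emph{adjacent} intervals. Therefore
\[
L_2(y_{\tau_1^y})\subset L_2(y_{\tau_2^y})\subset\cdots,\qquad \dim L_2(y_{\tau_k^y})=k,
\]
and, as noted just before the lemma, $\overline{\bigcup_{k\geq 1}L_2(y_{\tau_k^y})}=L_2$.

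Next I would construct the basis. Let $e_0^y$ be the unit vector spanning the one-dimensional space $L_2(y_{\tau_1^y})$ of constants, so $e_0^y=\I_{[0,1)}$. For $k\geq 1$ the orthogonal complement of $L_2(y_{\tau_k^y})$ inside $L_2(y_{\tau_{k+1}^y})$ is a line, spanned by a unit vector $e_k^y$ that is unique up to sign. By construction $\{e_l^y:0\leq l<k\}$ is an orthonormal basis of $L_2(y_{\tau_k^y})$, which is property~1), and the density of the union makes $\{e_l^y:l\geq0\}$ a complete orthonormal basis of $L_2$. The ``moreover'' assertion is then immediate: since the whole family is an orthonormal basis of $L_2$ and $\{e_l^y:0\leq l<k\}$ spans $L_2(y_{\tau_k^y})$, the remaining vectors $\{e_l^y:l\geq k\}$ form an orthonormal basis of its complement $H_k^y$.

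The remaining and most delicate point is the sign condition~2), which will also yield uniqueness. For $k\geq 1$ I would describe the line $L_2(y_{\tau_{k+1}^y})\ominus L_2(y_{\tau_k^y})$ explicitly: writing the two merged adjacent intervals as $I=[a,b)$ and $I'=[b,c)$, each of its elements vanishes off $I\cup I'$ and has the form $\alpha\I_I+\beta\I_{I'}$ with $\alpha\,\Leb(I)+\beta\,\Leb(I')=0$. Computing $u\mapsto(e_k^y,\I_{[0,u]})_{L_2}=\int_0^u e_k^y$ then shows that this function vanishes for $u\leq a$ and for $u\geq c$, rises linearly to $\alpha\,\Leb(I)$ as $u$ runs over $I$, and returns linearly to $0$ over $I'$; hence it never changes sign, keeps the sign of $\alpha$ on $(a,c)$, and is strictly positive there when $\alpha>0$. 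Consequently exactly one of the two admissible signs makes $(e_k^y,\I_{[0,u]})_{L_2}\geq0$ for all $u\in(0,1)$ (for $k=0$ the vector $e_0^y=\I_{[0,1)}$ already satisfies it), which simultaneously pins down each $e_k^y$ uniquely and proves~2). I expect the main obstacle to be precisely this verification that a single sign works simultaneously for every $u$; everything else reduces to elementary linear algebra once the nested, one-step increasing structure of the subspaces is in place.
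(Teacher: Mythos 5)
Your proposal is correct and takes essentially the same approach as the paper: both build the basis inductively, using that at each coalescence time exactly two adjacent intervals $[a,b)$ and $[b,c)$ merge, so that $L_2(y_{\tau_{k+1}^y})\ominus L_2(y_{\tau_k^y})$ is the line of mean-zero vectors $\alpha\I_{[a,b)}+\beta\I_{[b,c)}$, with the sign fixed by condition 2) and completeness coming from the density of $\bigcup_{k\geq 1}L_2(y_{\tau_k^y})$. The only cosmetic difference is that the paper writes down the explicit normalized vector~\eqref{eky} and asserts it is the unique admissible choice, whereas you derive the same conclusion by analyzing the tent-shaped function $u\mapsto\int_0^u e_k^y$; the underlying argument is identical.
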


In other words, the map $t \mapsto \pr_{y_t}$ is a projection map onto a subspace which decreases from exactly one dimension whenever a coalescence of $y$ occurs, and the basis $\{e_l^y,\ l \geq 0\}$ is adapted to that decreasing sequence of subspaces.

\begin{proof}
Let us construct the family $\{ e_k^y,\ k\geq 0\}$ explicitly. Since $y_{\tau_1^y}$ is constant on $[0,1]$, the only choice is $e_0^y=\I_{[0,1]}$. 

We say that an interval $I$ is a step of a map $f$ if $f$ is constant on $I$ but not constant on any interval strictly larger than $I$. At time $\tau_{k}^y$ a coalescence occurs. 
So there exist $a<b<c$ such that  $[a,b)$ and $[b,c)$ are steps of $y_{\tau_{k+1}^y}$, and $[a,c)$ is a step of $y_{\tau_{k}^y}$. We call $b$ \textit{the coalescence point} of $y_{\tau_{k}^y}$.
The only possible choice for $e_k^y$ so that it has norm $1$, it belongs to $L_2(y_{\tau_{k+1}^y})$, it is orthogonal to every element of $L_2(y_{\tau_{k}^y})$ and it satisfies Condition 2) is:
\begin{align}
\label{eky}
e_k^y= \frac{1}{\sqrt{c-a}}  \left(\sqrt{\frac{c-b}{b-a}} \I_{[a,b)}
-\sqrt{\frac{b-a}{c-b}} \I_{[b,c)}
  \right).
\end{align}
Since $\overline{\bigcup_{ k=1 }^{ \infty } L_2(y_{\tau_k^y})}=L_2$, we get that $\{ e_k^y,\ k\geq 0\}$ form a basis of $L_2$.

The last part of the statement follows from the fact that  for each $k \geq 1$, $H_k^y = L_2(y_{\tau_{k}^y})^{\bot}$.
\qed
\end{proof}

\begin{remark} 
  \label{rem_measurability_of_ey_inf_case}
  The construction of the basis $\left\{ e_k^y,\ k\geq 0 \right\}$ in the above proof  easily implies that the map $\coal\ni y\mapsto e_k^y \in L_2$ is measurable for any $k\geq 0$, where $\coal$ is endowed with the  induced topology of $\mC([0,\infty),\Li)$. Moreover, by~\eqref{eky},
  for every $k\geq 1$, $e_k^y$ is uniquely determined by $y_{\cdot \wedge \tau_k^y}$.
\end{remark}

According to step (S3), given $\X=(\Y,\W)$, we will define now the non-coalescing part $\T(\X)$ of $\W$.
 Note that  $\tau_{k}^{\Y}$ are $(\F_t^{\Y})$-stopping times for all $k\geq 0$, where $(\F^{\Y}_t)_{t\geq 0}$ is the complete right-continuous filtration generated by the MMAF $\Y$. Furthermore, Remark~\ref{rem_measurability_of_ey_inf_case} yields that $e_{k}^{\Y}$ is an $\F^{\Y}_{\tau_k^{\Y}}$-measurable random element in $L_2$. \textit{To simplify the notation, we will write $e_k$ and $\tau_k$ instead of $e_{k}^{\Y}$ and $\tau_{k}^{\Y}$, respectively.}

Recall that $\W$ is defined by equality~\eqref{equ_cylindrical_wiener_process_theta}. In particular, the real-valued process $\W_t(e_k)$, $t\geq 0$, satisfies
\begin{align*}
\W_{t}(e_k) &=  (\Y_{t},e_k)_{L_2} + \int_{ 0 }^{ t } \I_{\left\{ s\geq \tau_k \right\}}e_k \cdot \mathrm dB_s,
\end{align*}
because $\pr_{\Y_s}^\bot e_k =  \I_{\left\{ s  \geq \tau_k \right\}} e_k$. By construction of $e_k$ in Lemma~\ref{lem:onb}, $(\Y_{t},e_k)_{L_2}$ vanishes for all $t \geq \tau_k$. Thus, we note that for $t \in [0, \tau_k]$, $\W_{t}(e_k)=(\Y_{t},e_k)_{L_2}$ and that $\W_{\tau_k}(e_k)=0$, whereas for $t \geq \tau_k$, $\W_{t}(e_k)= B_t(e_k) - B_{\tau_k}(e_k)$. 
Since $B$ is independent of $\Y$ and thus of $e_k$,  $B_t(e_k)$ is well-defined by $  B_t(e_k)=\int_{ 0 }^{ t }e_k \cdot  \mathrm dB_s$, $t\geq 0$. 
To recap, in space direction $e_k$, the projection of $\W$ is equal to the projection of its coalescing part $\Y$ before stopping time $\tau_k$, and is equal to the projection of a noise $B$ which is independent of $\Y$ after $\tau_k$. Therefore, we define formally  $\xi= \T(\X)=\T(\Y,\W)$  as follows
\begin{align*} 
  \xi_t = \sum_{k=1}^\infty  e_k  \W_{t+\tau_k}(e_k) , \quad t\geq 0.
\end{align*}
More rigorously\footnote{Similarly as for the cylindrical Wiener process $\W$, $\xi$ can not be defined as a random process taking values in $L_2$.}, 
we define $\xi_t$ as a map from the Hilbert space $\ltz:=L_2\ominus \spann\{\I_{[0,1]}\}$ to $L_2(\Omega)$. We set
\begin{align}
  \label{equ_xi}
  \xi_t(h) :=  \sum_{k=1}^\infty  (e_k,h)_{L_2} \W_{t+\tau_k}(e_k), \quad t\geq 0, \ \ h \in \ltz.
\end{align}

\begin{proposition}
\label{prop:xi_is_cylindricalWp}
For every $h \in \ltz$ the sum~\eqref{equ_xi} converges almost surely in $\mC[0,\infty)$. Moreover, $\xi_t$, $t \geq 0$, is a cylindrical Wiener process in $\ltz$ starting at $0$ that is independent of the MMAF $\Y$. 
\end{proposition}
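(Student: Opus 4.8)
The plan is to reduce the whole statement to two clean facts about auxiliary one-dimensional processes, and then to transport the cylindrical-Wiener structure through the \emph{random} orthonormal basis $\{e_k\}_{k\geq1}$ by conditioning on $\Y$. First I would set $\beta^k_t:=\W_{t+\tau_k}(e_k)$ for $k\geq1$. By the computation preceding the statement (using $\W_{\tau_k}(e_k)=0$ and $\W_t(e_k)=B_t(e_k)-B_{\tau_k}(e_k)$ for $t\geq\tau_k$), one has $\beta^k_t=B_{t+\tau_k}(e_k)-B_{\tau_k}(e_k)$, so that~\eqref{equ_xi} becomes $\xi_t(h)=\sum_{k\geq1}(e_k,h)_{L_2}\beta^k_t$. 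The proposition then follows from: (i) $(\beta^k)_{k\geq1}$ is a sequence of independent standard Brownian motions which is independent of $\Y$; and (ii) $\{e_k\}_{k\geq1}$ is an orthonormal basis of $\ltz$, which by Lemma~\ref{lem:onb} and the definition $\ltz=L_2\ominus\spann\{\I_{[0,1]}\}$ gives $\sum_{k\geq1}(e_k,h)_{L_2}^2=\|h\|_{L_2}^2$ for every $h\in\ltz$.

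For (i) I would condition on $\sigma(\Y)$. Since $B$ is independent of $\Y$ while $\tau_k$ and $e_k$ are $\Y$-measurable (the $\tau_k$ being $(\F^\Y_t)$-stopping times and $e_k$ being $\F^\Y_{\tau_k}$-measurable by Remark~\ref{rem_measurability_of_ey_inf_case}), fixing a realization $\Y=y$ leaves $B$ with its unconditional law. Given $\Y=y$, the processes $t\mapsto B_t(e_k^y)$, $k\geq1$, form a centered Gaussian family with $\E{B_s(e_j^y)B_t(e_k^y)}=(s\wedge t)(e_j^y,e_k^y)_{L_2}=(s\wedge t)\delta_{jk}$, hence are independent standard Brownian motions. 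Each $\beta^k$ is a functional of $B_\cdot(e_k^y)$ alone, namely the shift of its increments after the deterministic time $\tau_k^y$, so by stationarity of Brownian increments and independence across $k$ the family $(\beta^k)_{k\geq1}$ consists of independent standard Brownian motions. Crucially this conditional law does not depend on $y$, so $(\beta^k)_{k\geq1}$ is independent of $\Y$ and is, unconditionally, a sequence of independent standard Brownian motions.

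Granting (i) and (ii), I would first prove a.s. convergence in $\mC[0,\infty)$ by working conditionally on $\Y$: the coefficients $c_k:=(e_k,h)_{L_2}$ are then constants with $\sum_k c_k^2=\|h\|_{L_2}^2$, and the partial sums $M^n_\cdot:=\sum_{k=1}^n c_k\beta^k_\cdot$ are continuous martingales, so Doob's maximal inequality yields
\[
\E{\sup_{t\leq T}|M^n_t-M^m_t|^2}\leq 4T\,\E{\sum_{k=m+1}^n (e_k,h)_{L_2}^2},
\]
and the right-hand side tends to $0$ as $m,n\to\infty$ since the series has deterministic total $\|h\|_{L_2}^2$. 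Thus $(M^n)$ is Cauchy in $L^2(\Omega;\mC[0,T])$; a.s. uniform convergence on $[0,T]$ follows from the It\^o--Nisio theorem applied conditionally to the independent symmetric summands $c_k\beta^k$, and a diagonal argument over $T\in\N$ gives a.s. convergence in $\mC[0,\infty)$. For the cylindrical-Wiener claim, linearity of $h\mapsto\xi_t(h)$ is immediate from the series, while conditionally on $\Y$ the process is centered Gaussian with
\[
\E{\xi_s(h_1)\xi_t(h_2)\mid\Y}=(s\wedge t)\sum_{k\geq1}(e_k,h_1)_{L_2}(e_k,h_2)_{L_2}=(s\wedge t)(h_1,h_2)_{L_2},
\]
the last equality being Parseval's identity in $\ltz$. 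As this conditional covariance is deterministic, the conditional law of $\xi$ given $\Y$ is that of a cylindrical Wiener process in $\ltz$, which simultaneously gives that $\xi$ is a cylindrical Wiener process and that it is independent of $\Y$; property ii) of Definition~\ref{def:cylind_W_p} follows from L\'evy's characterization applied to the continuous martingale $\xi_\cdot(h)$ with quadratic variation $t\|h\|_{L_2}^2$, and iii) is the above identity at $s=t$.

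I expect the main obstacle to be exactly this entanglement between $\xi$ and $\Y$: the coefficients $(e_k,h)_{L_2}$ are $\Y$-measurable, so the naive reasoning ``$\xi$ is a function of $(\beta^k)$, which is independent of $\Y$'' is \emph{false}. Independence is recovered only because the random change of basis $\{e_k^\Y\}$ is orthogonal, so that Parseval makes the conditional covariance basis-free and independent of the realization of $\Y$; getting this factorization clean is the heart of the argument. A secondary point requiring care is that $B$ is evaluated at the $\Y$-measurable random times $\tau_k$, which is legitimate precisely because $B$ is independent of $\Y$.
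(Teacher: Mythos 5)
Your proposal is correct and follows essentially the same route as the paper: the same reduction to the shifted processes $\W_{\cdot+\tau_k}(e_k)$ (the paper's Lemma~\ref{lem:shifted_W_is_BM}), the same conditioning-on-$\Y$ argument for their joint independence from the flow, and the same Doob plus It\^o--Nisio scheme for almost sure convergence of the series for each fixed $y \in \coal$. The only divergence is at the very end: where the paper proves independence of $\xi$ from $\Y$ separately via test functions and then verifies the Wiener property through a martingale argument with respect to the enlarged filtration $\F^{\eta}_t\vee\sigma(\Y)$ together with L\'evy's characterization, you obtain both conclusions simultaneously by observing that the conditional law of $\xi$ given $\Y$ is a deterministic Gaussian law (Parseval making the conditional covariance basis-free) --- an equally valid, and arguably tidier, finish, which also renders your appeal to L\'evy's characterization for property ii) superfluous, since the joint Gaussian structure already yields independent increments with respect to the natural filtration.
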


In order to prove the above statement, we start with the following lemma. 

\begin{lemma}
\label{lem:shifted_W_is_BM}
The processes $\W_{\cdot+\tau_k}(e_k)$, $k\geq 1$, are independent standard Brownian motions that do not depend on the MMAF $\Y$. 
\end{lemma}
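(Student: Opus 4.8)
The plan is to condition on the entire flow $\Y$ and thereby reduce the statement to elementary properties of Brownian motion. First I would record the explicit form of the shifted processes. Writing $B_t(e_k):=\int_0^t e_k\cdot\mathrm dB_s$, the computation already carried out just before the lemma gives $\W_t(e_k)=(\Y_t,e_k)_{L_2}+\int_0^t\I_{\{s\geq\tau_k\}}e_k\cdot\mathrm dB_s$, and for $t\geq\tau_k$ one has $(\Y_t,e_k)_{L_2}=0$ since $\Y_t\in L_2(\Y_{\tau_k})$ while $e_k\perp L_2(\Y_{\tau_k})$. Because $\tau_k<\infty$ a.s. by (G5), evaluating at $t=u+\tau_k\geq\tau_k$ yields the clean expression
\[
\W_{u+\tau_k}(e_k)=B_{u+\tau_k}(e_k)-B_{\tau_k}(e_k),\qquad u\geq 0.
\]
I will denote this process by $M_k(u)$.

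The structural facts to exploit are that each $e_k$ is $\F_{\tau_k}^{\Y}$-measurable, that $\{e_k,\ k\geq 1\}$ is orthonormal, and, crucially, that $B$ is independent of $\Y$. I would therefore argue conditionally on $\Y$. Since $B\perp\Y$, the conditional law of $B$ given $\Y$ is again that of a cylindrical Wiener process, whereas the vectors $e_k$ and the times $\tau_k$ become deterministic. Using property iii) of Definition~\ref{def:cylind_W_p} together with the orthonormality $(e_k,e_l)_{L_2}=\delta_{kl}$, the processes $(B_\cdot(e_k))_{k\geq 1}$ are, conditionally on $\Y$, jointly Gaussian with $\E{B_t(e_k)B_t(e_l)}=t\,\delta_{kl}$; being centred, jointly Gaussian and uncorrelated, they are independent standard Brownian motions given $\Y$ (their well-definedness as stochastic integrals with the $B$-independent integrand $e_k$ was already noted above).

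With the $\tau_k$ now deterministic, $u\mapsto B_{u+\tau_k}(e_k)-B_{\tau_k}(e_k)$ is a standard Brownian motion by the shift invariance of Brownian motion, and since the $(B_\cdot(e_k))_{k}$ are independent and each $M_k$ is a function of $B_\cdot(e_k)$ alone, the family $\{M_k\}_{k\geq 1}=\{\W_{\cdot+\tau_k}(e_k)\}_{k\geq 1}$ is, conditionally on $\Y$, a family of independent standard Brownian motions. The decisive observation is that this conditional law does \emph{not} depend on the realisation of $\Y$: it is always the product of Wiener measures. Consequently the family is unconditionally independent of $\Y$, its members are mutually independent, and each is a standard Brownian motion, which is exactly the claim.

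The main obstacle is one of rigour rather than of depth, namely a clean justification of the conditioning step. Formally I would write $M_k=\Phi_k(\Y,B)$ for explicit measurable maps $\Phi_k$ and invoke the standard disintegration for independent arguments: for $\Y$ and $B$ independent, the conditional law of $(\Phi_k(\Y,B))_k$ given $\Y=\omega$ equals the law of $(\Phi_k(\omega,B))_k$. The argument above then identifies this law as the fixed product of Wiener measures for every $\omega$, from which independence of $\Y$ and mutual independence of the $M_k$ follow simultaneously. The subsidiary point needing care is precisely that conditional joint Gaussianity of $(B_\cdot(e_k))_k$ given $\Y$, which is where the independence of $B$ and $\Y$ enters and which is what legitimises treating each $e_k$ as a fixed direction and each $\tau_k$ as a fixed time.
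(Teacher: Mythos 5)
Your proposal is correct and follows essentially the same route as the paper: reduce to the identity $\W_{u+\tau_k}(e_k)=B_{u+\tau_k}(e_k)-B_{\tau_k}(e_k)$, then condition on $\Y$ (legitimate because $B$ is independent of $\Y$), observe that for a fixed realisation $y\in\coal$ the vectors $e_k^y$ are deterministic orthonormal directions and the $\tau_k^y$ deterministic times, so the shifted increments are independent standard Brownian motions, and conclude from the fact that this conditional law is the same product of Wiener measures for every $y$. The paper formalises the conditioning step by factorising expectations of products of bounded measurable test functions $F_0(\Y)\prod_k F_k(\eta_k)$, whereas you invoke the disintegration for functions of independent arguments; these are two write-ups of the same argument.
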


\begin{proof}
Let us denote
\begin{align}
\label{def_eta}
\eta_k(t):=\W_{t+\tau_k}(e_k)= B_{t+\tau_k}(e_k) - B_{\tau_k}(e_k), \quad t \geq 0, \ \ k \geq 1.
\end{align}
  We fix $n\geq 1$ and show that the processes $\Y$, $\eta_k$, $k \in [n]$, are independent and that $\eta_k$, $k \in [n]$, are standard Brownian motions. Let 
  \[
    F_0: \mC([0,\infty),\Li) \to \R, \quad F_k: \mC[0,\infty) \to \R, \quad k \in [n],
  \]
  be bounded measurable functions. By strong Markov property of $B$ and the independence of $B$ and $\Y$, $B_{\cdot +\tau_k}-B_{\tau_k}$ is also independent of $\Y$. Moreover for every $y \in \coal$,
  \[
    \eta_k^y(t):=B_{t+\tau_k^y}(e_k^y) - B_{\tau_k^y}(e_k^y), \quad t\geq 0, \ \ k \in [n],
  \]
  are independent standard Brownian motions. Therefore, we can compute
\begin{align*}
  \E{ F_0\left( \Y \right)\prod_{ k=1 }^n F_k\left( \eta_k \right)}
  &=  \E{ \E{ F_0\left( \Y \right)\prod_{ k=1 }^{ n } F_k\left( \eta_k \right)\bigg| \Y } } \\
  &=\E{F_0\left( \Y \right)  \E{\prod_{ k=1 }^{ n } F_k\left( \eta_k^y \right)}\bigg|_{y=\Y} }\\
  &=\E{ F_0\left( \Y \right)\E{\prod_{ k=1 }^{ n } F_k\left( w_k \right)}\bigg|_{y=\Y} }\\
  &= \E{ F_0\left( \Y \right) }\prod_{ k=1 }^{ n } \E{ F_k(w_k) },
\end{align*}
where $w_k$, $k \in [n]$, are independent standard Brownian motions that do not depend on $\Y$. This completes the proof of the lemma.
\qed
\end{proof}

\begin{proof}[Proof of Proposition~\ref{prop:xi_is_cylindricalWp}]
Let $h \in \ltz$  and $y \in \coal$ be fixed. For every $n \in \N$ we define
\begin{align*}
  M_t^{y, n} (h) :=  \sum_{k=1}^n  (e_k^y, h)_{L_2}\eta_k(t), \quad t\geq 0,
\end{align*}
where $\eta_k$, $k \geq 1$, are defined by~\eqref{def_eta}. 
By Lemma~\ref{lem:shifted_W_is_BM}, $\eta_k$, $k\geq 1$, are independent standard Brownian motions,  hence $M_t^{y, n} (h)$ , $t\geq 0$, is a continuous square-integrable martingale with respect to the filtration $(\F^{\eta}_t)_{t\geq 0}$ generated by $\eta_k$, $k\geq 1$, with quadratic variation $
  \langle M^{y, n} (h) \rangle_t = \sum_{k=1}^n  (e_k^y, h)_{L_2}^2t$, $t\geq 0$.
Moreover,  for each $T>0$ the sequence of processes $\{M^{y,n}(h)\}_{n \geq 1}$  restricted to the interval $[0,T]$  converges in $L_2(\Omega, \mC [0,T])$. Indeed, for each $m < n$, by Doob's inequality
\begin{align*}
  \E{\sup_{t \in [0,T]} \left|M^{y,n}_t(h)-M^{y,m}_t(h) \right|^2 }
  &= \E{\sup_{t \in [0,T]} \left|\sum_{k=m+1}^n  (e_k^y, h)_{L_2} \eta_k(t) \right|^2  } \\
&\leq  4\sum_{k=m+1}^n (e_k^y, h)_{L_2}^2 T,
\end{align*}
The sum $\sum_{k=1}^n (e_k^y, h)_{L_2}^2$ converges  to $\|h\|_{L_2}^2$ because $\{e_k^y,\ k \geq 1\}$ is  an orthonormal basis of $\ltz$.  Thus, $\{M^{y,n}(h)\}_{n \geq 1}$ is a Cauchy sequence in the space $L_2(\Omega, \mC [0,T])$, and hence, it converges to a limit denoted by $M^y(h)= \sum_{k =1}^\infty (e_k^y, h)_{L_2} \eta_k$.  Trivially, $M^y_t(h)$ can be well-defined for all $t\geq 0$, and, by \cite[Lemma~B.11]{Cherny:2005}, $M^y_t(h)$, $t \geq 0$, is a continuous square-integrable $(\F^{\eta}_t)$-martingale with quadratic variation $\langle M^y(h) \rangle_t= \lim_{n \to \infty} \langle M^{y, n} (h) \rangle_t= \|h\|^2_{L_2} t$, $t\geq 0$. 

Remark that $\sum_{k=1}^{\infty} (e_k^y, h)_{L_2} \eta_k$ is a sum of independent random elements in $\mC [0, T]$. Thus, by It\^o-Nisio's Theorem~\cite[Theorem~3.1]{Ito:1968},  $\{M^{y, n} (h)\}_{n\geq 1}$ converges almost surely to $M^y(h)$ in $\mC [0, T]$ for every $T>0$, and therefore, in $\mC[0,\infty)$.  Recall that by Lemma~\ref{lem:shifted_W_is_BM}, the sequence $\{\eta_k\}_{k\geq 1}$ is independent of $\Y$, and by Lemma~\ref{lem:mmaf_belongs_to_coal}, $\Y$ belongs to $\coal$ almost surely. Then $\sum_{k=1}^{\infty}  (e_k, h)_{L_2}\eta_k$ also converges almost surely in $\mC[0, \infty)$ to a limit that we  have called $\xi(h)$. 

Moreover, similarly as the proof of Lemma~\ref{lem:shifted_W_is_BM}, we show that the processes $\Y$ and $\{\xi(h_i),\ i \in [n]\}$ for every $h_i \in \ltz$, $i \in [n]$, $n\geq 1$, are independent. We conclude that $\xi$ is independent of $\Y$. 

Let us show that $\xi$ is a cylindrical Wiener process. Obviously, $h \mapsto \xi(h)$ is a linear map. We denote $\tilde{\F}^{\eta,\Y}_t=\F^{\eta}_t\vee\sigma(\Y)$, $t\geq 0$. We need to check that for every $h \in \ltz$, $\xi(h)$ is an $(\tilde{\F}^{\eta,\Y}_t)$-Brownian motion. According to L\'evy's characterization of Brownian motion \cite[Theorem~II.6.1]{Ikeda:1989}, it is enough to show that $\xi(h)$ is a continuous square-integrable $(\tilde{\F}^{\eta,\Y}_t)$-martingale with quadratic variation $\|h\|_{L_2}^2t$. So, we take $n\geq 1$ and a bounded measurable function
\[
  F: \mC[0,\infty)^{n}\times \mC([0,\infty),L_2) \to \R.
\]
Then using Lemma~\ref{lem:shifted_W_is_BM} and the fact that $M^y(h)$ is an $(\F^{\eta}_t)$-martingale, we have for every $s<t$
\begin{align*}
  \e[ \xi_t(h)F\left( \left(\eta_k(\cdot \wedge s)\right)_{k=1}^n,\Y \right) ]&= \E{ \E{ \xi_t(h)F\left( \left(\eta_k(\cdot \wedge s)\right)_{k=1}^n,\Y \right)\big|\Y } }\\
  &= \E{ \E{ M^y_t(h)F\left( \left(\eta_k(\cdot \wedge s)\right)_{k=1}^n,y \right)}\Big|_{y=\Y}  }\\
  &= \E{ \E{ M^y_s(h)F\left(  \left(\eta_k(\cdot \wedge s)\right)_{k=1}^n,y \right)}\Big|_{y=\Y}  }  \\
&= \E{ \xi_s(h)F\left( \left(\eta_k(\cdot \wedge s)\right)_{k=1}^n,\Y \right) }.
\end{align*}
Hence, $\xi(h)$ is an $(\tilde{\F}^{\eta,\Y}_t)$-martingale. Similarly, one can prove that $\xi_t(h)^2-\|h\|_{L_2}^2t$, $t\geq 0$, is also an $(\tilde{\F}^{\eta,\Y}_t)$-martingale. This proves that $\xi(h)$ is a continuous square-integrable $(\tilde{\F}^{\eta,\Y}_t)$-martingale with quadratic variation $\|h\|_{L_2}^2t$, $t\geq 0$. The equality $\E{ \xi_t(h_1)\xi_t(h_2) }=t ( h_1,h_2 )_{L_2}$, $t\geq 0$, trivially follows from the polarization equality and the fact that $\xi(h_1)$ and $\xi(h_2)$ are martingales with respect to the same filtration $(\tilde{\F}^{\eta,\Y}_t)_{t\geq 0}$. Thus, $\xi$ is an $(\tilde{\F}^{\eta,\Y}_t)$-cylindrical Wiener process in $\ltz$ starting at $0$. This finishes the proof of the proposition.
\qed
\end{proof}

We conclude this section by defining properly the space $\bE$ on which the random element $\X$ take values and the non-coalescing remainder map $\T: \bE \to \bF$ needed to achieve step (S3) from the introduction.
However, as  we already noted, the cylindrical Wiener process $\W$ is not a random element in $\mC([0,\infty),L_2)$. 
So we define $\bE:=\mC([0,\infty),\Li)\times \mC[0,\infty)^{\N_0}$ and $\bF:=\mC_0[0,\infty)^{\N}$. 
Here, $\mC[0,\infty)$ is the space of continuous functions from $[0, \infty)$ to $\R$ equipped with its usual Fréchet distance, $\mC_0[0,\infty)$ denotes the subspace of all functions vanishing at $0$ and $\N_0:= \N \cup \{0\}$. 
Equipped with the metric induced by the product topology, $\bE$ is a Polish space. 

Now, we fix an orthonormal basis $\{h_j,\ j\geq 0\}$ of $L_2$ such that $h_0=\I_{[0,1]}$.
In particular, $\{h_j,\ j\geq 1\}$  is an orthonormal basis of $\ltz$.
We identify the cylindrical Wiener process $\W$ with the following random element in $\mC[0,\infty)^{\N_0}$:
\[
  \widehat{\W}_t=\left(\widehat{\W}_j(t)\right)_{j\geq 0}:=\left(\W_t(h_j)\right)_{j\geq 0}, \quad  t\geq 0.
\]
Indeed $\W$ and $\widehat{\W}$ are related by $\W_t(h)=\sum_{ j=0 }^{ \infty } \widehat{\W}_j(t) ( h , h_j )_{L_2}$, for all $t\geq 0$ and $h \in L_2$, where the series converges in $\mC[0,\infty)$ almost surely for every $h \in L_2$. 

Similarly, we identify $\xi$ with $\widehat{\xi}_t=\left(\widehat{\xi}_j(t)\right)_{j\geq 1}:=\left(\xi_t(h_j)\right)_{j\geq 1}$, $t\geq 0$, and $\Y$ with $\widehat{\Y}_t=\left( \widehat{\Y}_j(t) \right)_{j\geq 0}:=\left( ( \Y_t,h_j )_{L_2} \right)_{j\geq 0}$, $t\geq 0$. 
By equality~\eqref{equ_xi}, $\widehat{\xi}$ and $\widehat{\W}$ are related by
\begin{equation} 
  \label{equ_map_for_widehat_t}
  \widehat{\xi}_j(t)=\sum_{ k=1 }^{ \infty }\sum_{ i=1 }^{ \infty } ( e_k,h_j )_{L_2}( e_k,h_i )_{L_2}\widehat{\W}_i(t+\tau_{k}), \quad t\geq 0,\ \ j\geq 1.
\end{equation}

We define $\widehat{\X}=\left( \Y,\widehat{\W} \right)$, which is a random element on in $\bE$. By~\eqref{equ_map_for_widehat_t}, there exists a measurable map $\widehat{\T}: \bE \to \bF$ such that 
\begin{align}
\label{hat_equality}
\widehat{\xi}=\widehat{\T}(\widehat{\X})
\end{align}
almost surely.

\subsection{Statement of the main result}
\label{parag:statement_main_result}

Let us clarify step (S4) from the introduction. According to Definition~\ref{def:value_along_xi_n}, we need to define a random sequence $\{\xi^n\}_{n \geq 1}$ in $\bF=\mC_0[0,\infty)^{\N}$ converging to $0$ in distribution and such that $\p^{\xi^n}$ is absolutely continuous with respect to the law of $\widehat{\T}(\widehat{\X})$. By~\eqref{hat_equality} and Proposition~\ref{prop:xi_is_cylindricalWp},  $\p^{\widehat{\T}(\widehat{\X})}$ is the law of a sequence of independent Brownian motions.

Let for each $n \geq 1$, $\xi^n:=(\xi^n_j)_{j \geq 1}$ be the sequence of  Ornstein-Uhlenbeck processes, independent of $\Y$,  that are strong solutions to the equations
\begin{equation}
\label{def_OU_process}
\begin{cases}
  \mathrm d \xi^n_j(t)= - \alpha_j^n \I_{\left\{ t\leq n \right\}} \xi^n_j(t) \mathrm dt + \mathrm d \widehat{\xi}_j(t), \\
\xi^n_j(0)=0,
\end{cases}
\end{equation}
where $\{\alpha_j^n,\ n,j \geq 1\}$ is a family of non-negative real numbers such that
\begin{enumerate}
  \item[(O1)] for every $n\geq 1$ the series $\sum_{ j=1 }^{ \infty } (\alpha_j^n)^2< +\infty$;

  \item[(O2)] for every $j\geq 1$, $\alpha_j^n \to +\infty$ as $n\to\infty$.   
\end{enumerate}

\begin{remark} 
  \label{rem_conditions_a_and_b}
  \begin{enumerate} 
    \item [(i)] Using Kakutani's theorem~\cite[p. 218]{Kakutani:1948} and Jensen's inequality, it is easily seen that Condition (O1) guaranties the absolute continuity of $\p^{\xi^n}$ with respect to $\p^{\widehat{\xi}}$ on $\mCo$. The indicator function in the drift is important, otherwise the law is singular. 
    Hence, Assumption (B1) of Definition~\ref{def:value_along_xi_n} is satisfied by the sequence $\{ \xi^n \}_{n\geq 1}$.

    \item [(ii)] Condition (O2) yields the convergence in distribution of $\{\xi^n\}_{n\geq 1}$ to $0$ in $\mCo$ (see Lemma~\ref{lem_convergence_of_ou_processes} below). Thus Assumption (B2) is also satisfied.
  \end{enumerate}
\end{remark}

The following theorem is the main result of the paper.

\begin{theorem}
\label{theo:infinite_dim}
The value of the conditional distribution of $\widehat{\X}=(\Y,\widehat{\W})$ to the event $\{ \widehat{\T}(\widehat{\X})=0\}$ along $\{\xi^n\}$ is the law of $(\Y,\widehat{\Y})$.
\end{theorem}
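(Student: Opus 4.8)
The plan is to apply the construction method of Proposition~\ref{prop:existence_regular}. The natural candidate for the quadruple $(\bG,\Psi,Y,Z)$ is $\bG=\mC([0,\infty),\Li)$, $Y=\Y$, and $Z=\widehat{\xi}=\widehat{\T}(\widehat{\X})$, the latter being independent of $\Y$ and a sequence of independent Brownian motions by Proposition~\ref{prop:xi_is_cylindricalWp}. For $\Psi$ I would take the reconstruction of $\widehat{\W}$ from the flow and the noise: given $y\in\coal$ and $z=(z_j)_{j\geq1}\in\bF$, set in each coalescence direction
\[
\W^{y,z}_s(e_k^y):=(y_s,e_k^y)_{L_2}\,\I_{\{s\leq\tau_k^y\}}+z(e_k^y)(s-\tau_k^y)\,\I_{\{s\geq\tau_k^y\}},
\]
with $z(e_k^y)=\sum_j(e_k^y,h_j)_{L_2}z_j$, and then $\Psi(y,z):=(y,(\sum_k(e_k^y,h_j)_{L_2}\W^{y,z}_s(e_k^y))_{j\geq0})$. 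The identities recorded just before~\eqref{equ_xi} (namely $\W_s(e_k)=(\Y_s,e_k)_{L_2}$ for $s\leq\tau_k$ and $\W_s(e_k)=\xi_{s-\tau_k}(e_k)$ for $s\geq\tau_k$) show that in fact $\Psi(\Y,\widehat{\xi})=\widehat{\X}$ almost surely, so (P4) is trivial and (P1)-(P3) are immediate from the measurability of $y\mapsto(e_k^y,\tau_k^y)$ (Remark~\ref{rem_measurability_of_ey_inf_case}) and from~\eqref{equ_map_for_widehat_t}.

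Granting the quadruple, Proposition~\ref{prop:existence_regular}, whose hypotheses (B1)-(B2) hold by Remark~\ref{rem_conditions_a_and_b} and whose independence assumption holds since $\xi^n$ is a functional of $\widehat{\xi}$ and hence independent of $\Y$, reduces the theorem to proving that $\Psi(\Y,\xi^n)$ converges in distribution to $(\Y,\widehat{\Y})$ in $\bE$. Because the first coordinate of $\Psi(\Y,\xi^n)$ is always $\Y$ and $\xi^n$ is independent of $\Y$, I would condition on $\Y=y$: writing $g_n(y):=\E{f(y,\widehat{\W}^{y,\xi^n})}$ for $f\in\mC_b(\bE)$, one has $\E{f(\Psi(\Y,\xi^n))}=\E{g_n(\Y)}$, and since $|g_n|\leq\|f\|_\infty$ uniformly, dominated convergence shows it suffices to prove, for almost every fixed $y\in\coal$, that $\widehat{\W}^{y,\xi^n}$ converges in distribution to $\widehat{y}$ in $\mC[0,\infty)^{\N_0}$, where $\widehat{y}_j(s)=(y_s,h_j)_{L_2}$.

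Fix such a $y$. Using $(y_s,e_k^y)_{L_2}=0$ for $s\geq\tau_k^y$ (Lemma~\ref{lem:onb}), the candidate limit satisfies $\widehat{y}_j(s)=\sum_k(e_k^y,h_j)_{L_2}(y_s,e_k^y)_{L_2}\I_{\{s\leq\tau_k^y\}}$, so the coordinatewise remainder is
\[
R^n_j(s):=\widehat{\W}^{y,\xi^n}_j(s)-\widehat{y}_j(s)=\sum_{k:\,\tau_k^y\leq s}(e_k^y,h_j)_{L_2}\,\xi^n_{s-\tau_k^y}(e_k^y),
\]
with $\xi^n(e_k^y)=\sum_i(e_k^y,h_i)_{L_2}\xi^n_i$. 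As the limit is deterministic, it is enough to show $\sup_{s\leq T}|R^n_j(s)|\to0$ in probability for every $j,T$, which I would get from convergence of the finite-dimensional distributions of $R^n_j$ to $0$ together with tightness of $\{R^n_j\}_n$ in $\mC[0,T]$. For the former, the Itô isometry applied to $\xi^n_i(t)=\int_0^t e^{-\alpha^n_i((t\wedge n)-(\sigma\wedge n))}\,\mathrm d\widehat{\xi}_i(\sigma)$ yields $\E{R^n_j(s)^2}=\sum_i\int_0^\infty\Phi^n_i(\sigma)^2\,\mathrm d\sigma$ for an explicit kernel $\Phi^n_i$ tending to $0$ pointwise as $\alpha^n_i\to\infty$ by (O2); its diagonal part is dominated by $\sum_k(e_k^y,h_j)_{L_2}^2(s-\tau_k^y)^+<\infty$ via the elementary bound $\mathrm{Var}(\xi^n_t(e_k^y))\leq t$ and tends to $0$ by dominated convergence. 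Tightness would follow from a uniform-in-$n$ Kolmogorov estimate on the increments, again using that the damped kernels are bounded by the Brownian one.

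The main obstacle is the off-diagonal part of $\E{R^n_j(s)^2}$, i.e.\ the correlations $\E{\xi^n_{s-\tau_k^y}(e_k^y)\,\xi^n_{s-\tau_{k'}^y}(e_{k'}^y)}$ for $k\neq k'$. For the true Brownian noise these vanish by orthonormality of $\{e_k^y\}$, but the Ornstein--Uhlenbeck damping destroys this exact cancellation, and the naive bound $|\E{\xi^n_{s-\tau_k}(e_k)\xi^n_{s-\tau_{k'}}(e_{k'})}|\leq\min(s-\tau_k,s-\tau_{k'})$ is useless, because the coalescence times accumulate at $0$ (as $g$ is strictly increasing, $N(g)=+\infty$ and $\tau_k^y\downarrow0$), so $\sum_k(s-\tau_k^y)^+=+\infty$. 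Resolving this requires the joint use of (O1)-(O2): one must quantify the decay in $i$ of the defect $\min(t,t')-\E{\xi^n_i(t)\xi^n_i(t')}$ and its cancellation against the oscillating coefficients $(e_k^y,h_i)_{L_2}$, for instance through a summation-by-parts in $k$ exploiting the monotonicity of $\tau_k^y$. I expect this uniform control of the accumulated correlations to be the technically hardest and most delicate point of the argument.
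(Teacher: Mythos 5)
Your reduction framework is correct and close to the paper's own: the quadruple method of Proposition~\ref{prop:existence_regular}, the reconstruction map $\Psi$, and the reduction (by independence of $\xi^n$ and $\Y$, conditioning on $\Y=y$ and dominated convergence) to showing that $\Psi(y,\xi^n)$ converges in distribution to $(y,\widehat{y})$ for $\p^{\Y}$-a.e.\ $y$. Your choice $Z=\widehat{\xi}$ is even a legitimate simplification: the paper instead takes $Z=\widehat{\mZ}$ for a \emph{fresh} cylindrical Wiener process $\mZ$ independent of $\Y$, and must then prove (P4) as a genuine equality in law (Lemma~\ref{lem:phi_is_cylindricalWp}, via Proposition~\ref{prop_mmaf_and_cylindrical_brownian_motion}) and (P3) by computation (Proposition~\ref{prop:T_of_psi_is_identity}), whereas with $Z=\widehat{\xi}$ one gets $\Psi(\Y,\widehat{\xi})=\widehat{\X}$ a.s.\ and both properties are immediate; the resulting kernel $p(A,z)=\P{\Psi(\Y,z)\in A}$ is the same in both cases.

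However, there is a genuine gap, and it sits exactly where you flag it: the proof that the remainder vanishes despite the accumulation of the coalescence times $\tau_k^y$ at $0$. You correctly observe that expanding $\E{R^n_j(s)^2}$ directly produces off-diagonal correlations that cannot be summed naively, but the directions you propose (quantifying the defect $\min(t,t')-\E{\xi^n_i(t)\xi^n_i(t')}$, summation by parts in $k$) are neither developed nor the way the difficulty is resolved. The paper's proof rests on two moves absent from your proposal. First, add and subtract the \emph{unshifted} processes: $R^n_j=R^{n,1}_j+R^{n,2}_j$, where $R^{n,1}_j(t)=\sum_{k,l}(e_k^y,h_j)_{L_2}(h_l,e_k^y)_{L_2}\,\xi^n_l(t)$ collapses a.s.\ to the single coordinate $\xi^n_j(t)$ (the identity holds a.s.\ under Wiener measure and transfers to $\xi^n$ by absolute continuity), hence tends to $0$ by Lemma~\ref{lem_convergence_of_ou_processes}. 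Second, the increment part is written as $R^{n,2}_j(t)=(V^n_t,h_j)_{L_2}$ with $V^n_t=\sum_{k,l}e_k^y(e_k^y,h_l)_{L_2}\bigl(\I_{\{t\geq\tau_k^y\}}\xi^n_l(t-\tau_k^y)-\xi^n_l(t)\bigr)$, and one bounds $\E{(R^{n,2}_j(t))^2}\leq\E{\|V^n_t\|_{L_2}^2}$: Parseval over the orthonormal family $\{e_k^y\}$ removes all cross-$k$ terms (the correlations you worry about never enter), independence of $\{\xi^n_l\}_{l\geq1}$ removes cross-$l$ terms, and the Ornstein--Uhlenbeck increment bound~\eqref{ornst_uhl_2} yields $\E{\|V^n_t\|_{L_2}^2}\leq\sum_k(t\wedge\tau_k^y)\leq t^{\eps}\sum_k(\tau_k^y)^{1-\eps}$. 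Only increments over windows of length $\tau_k^y$ appear --- not values at times $s-\tau_k^y$ --- so the accumulation of the $\tau_k^y$ at $0$ is precisely what makes this series converge. Finally, you never invoke the a.s.\ summability $\sum_k(\tau_k^{\Y})^{1-\eps}<\infty$ for $\eps\in(0,\tfrac12)$, which is a nontrivial property of the MMAF (Lemma~\ref{lem:stopping_time_mmaf}, resting on the estimate $\E{N(\Y_t)}\leq C_{\eps,T}(1+\|g\|_{L_{2+\eps}})t^{-1/2}$); it is indispensable, since the same $V^n_t$ estimate powers both the $L^2$ convergence (Lemma~\ref{lem_L2}) and the Kolmogorov--Chentsov tightness (Lemma~\ref{lem_tight1}), so without it your tightness step fails for the same reason as your finite-dimensional step.
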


The event $\{ \widehat{\T}(\widehat{\X})=0\}$, which equals to $\{ \widehat{\xi}=0\}$, is by construction the event where the non-coalescing part of $\widehat{\W}$ vanishes.

\begin{remark} 
  \label{rem_non_strictly_increasing_function}
  For simplicity, we assumed in sections~\ref{parag:construction_map_T} and~\ref{parag:statement_main_result} that the initial condition $g$ is strictly increasing. Actually, everything remains true if $g$ is an arbitrary element of $\ltp$, up to replacing the space $L_2$ by the space $L_2(g)$. In particular, if $g$ is a step function, then $L_2(g)$ has finite dimension, equal to $N(g)$, and the orthonormal basis constructed in Lemma~\ref{lem:onb} and the sum in the definition of $\widehat{\xi}$ consists of finitely many summands. 
  \end{remark}

\section{Proof of the main theorem}
\label{sec:proof_main_theorem}

In order to prove Theorem~\ref{theo:infinite_dim}, we follow the strategy introduced in 
Section~\ref{sec:conditional_distribution}. We start by the construction of a quadruple $(\bG,\Psi,Y,Z)$ satisfying (P1)-(P4). The idea behind the construction of $\Psi$ is inspired by the result of Proposition~\ref{prop_mmaf_and_cylindrical_brownian_motion}, stating that $\W$ can be build from the MMAF $\Y$ and  some independent process.

\subsection{Construction of quadruple}
\label{parag:quadruple}

Define $\bG:=\coal$, $Y:=\Y$ and $Z:=\widehat{\mZ}$, where $\mZ$ is a cylindrical Wiener process in $\ltz$ starting at $0$ that is independent of $\Y$. By the same identification as previously, for the same basis $\{h_j,\ j\geq 0\}$,  $\widehat{\mZ}_t=\left(\widehat{\mZ}_j(t)\right)_{j\geq 1}:=\left(\mZ_t(h_j)\right)_{j\geq 1}$, $t \geq 0$,  is a sequence of independent standard Brownian motions and is a random element in $\bF$. Therefore, properties (P1) and (P2) are satisfied. 

We define
\[
  \psi(\Y,\mZ):= \left(\Y, \varphi(\Y,\mZ)\right),
\]
where  $\varphi_t(\Y,\mZ)$ is a map from $L_2$ to $L_2(\Omega)$ defined by 
\begin{align}
\label{formula_phi}
\varphi_t(\Y,\mZ) (h)= (\Y_t,h)_{L_2} + \sum_{k=1}^\infty (e_k,h)_{L_2} \I_{\left\{ t \geq \tau_k \right\}} \mZ_{t-\tau_k}(e_k) 
\end{align}
for all $t\geq 0$ and $h \in L_2$. 
As in the proof of Lemma~\ref{lem:shifted_W_is_BM}, one can show that $\mZ(e_k)$, $k\geq 1$, are independent standard Brownian motions that do not depend on $\Y$.

\begin{lemma}
\label{lem:phi_is_cylindricalWp}
For each $h \in L_2$, the sum in~\eqref{formula_phi} converges almost surely in $\mC[0, \infty)$. Furthermore, 
$\varphi(\Y,\mZ)$ is a cylindrical Wiener process in $L_2$ starting at $g$ and the law of $\psi(\Y,\mZ)$ is equal to the law of $\X=(\Y,\W)$. 
\end{lemma}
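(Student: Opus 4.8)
\textbf{Proof plan for Lemma~\ref{lem:phi_is_cylindricalWp}.}
The plan is to reduce the statement to Proposition~\ref{prop_mmaf_and_cylindrical_brownian_motion} by producing a cylindrical Wiener process $B$ in $L_2$ starting at $0$, independent of $\Y$, so that $\varphi(\Y,\mZ)$ is exactly the process $\W$ built from $\Y$ and $B$ via~\eqref{equ_cylindrical_wiener_process_theta}. First I would address the almost-sure convergence of the series in~\eqref{formula_phi}. For fixed $h \in L_2$ and $y \in \coal$, the summands $(e_k^y,h)_{L_2} \I_{\{t \geq \tau_k^y\}} \mZ_{t-\tau_k^y}(e_k^y)$ are independent continuous processes (since $\mZ(e_k^y)$, $k \geq 1$, are independent standard Brownian motions by the argument preceding the lemma), and on each interval $[0,T]$ the $L_2(\Omega,\mC[0,T])$-norm of the $k$-th term is controlled by $(e_k^y,h)_{L_2}^2 T$ via Doob's inequality. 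Since $\{e_k^y\}$ is an orthonormal basis, $\sum_k (e_k^y,h)_{L_2}^2 = \|\pr^{\bot}_{y_0}h\|_{L_2}^2 \leq \|h\|_{L_2}^2 < \infty$, so the partial sums form a Cauchy sequence and converge in $L_2(\Omega,\mC[0,T])$; It\^o--Nisio then upgrades this to almost-sure convergence in $\mC[0,\infty)$, exactly as in the proof of Proposition~\ref{prop:xi_is_cylindricalWp}. Conditioning on $\Y$ and using that $\{e_k\}$ is independent of $\mZ$ transfers this to almost-sure convergence of the series with random basis $e_k = e_k^{\Y}$.

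The key algebraic step is to rewrite~\eqref{formula_phi} in the form~\eqref{equ_cylindrical_wiener_process_theta}. I would define a process $B$ by setting, for each $k \geq 1$, its increments in direction $e_k$ after $\tau_k$ equal to $\mZ_{\cdot-\tau_k}(e_k)$, and filling in an independent noise before $\tau_k$ and in the directions orthogonal to all $e_k$. Concretely, since $\pr_{\Y_s}^\bot e_k = \I_{\{s \geq \tau_k\}} e_k$, the second term of~\eqref{formula_phi} should match $\int_0^t \pr_{\Y_s}^\bot h \cdot \mathrm dB_s = \sum_k (e_k,h)_{L_2}\int_0^t \I_{\{s\geq\tau_k\}}\,\mathrm dB_s(e_k)$, which forces $B_t(e_k) - B_{\tau_k}(e_k) = \mZ_{t-\tau_k}(e_k)$ for $t \geq \tau_k$. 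I would construct such a $B$ by taking the $\mZ(e_k)$ as the post-$\tau_k$ pieces and splicing in a fresh independent cylindrical Wiener process (on an enlarged probability space) for the remaining degrees of freedom, then verify using L\'evy's characterization and the independence/strong-Markov arguments of Lemma~\ref{lem:shifted_W_is_BM} that the resulting $B$ is a cylindrical Wiener process starting at $0$ and is independent of $\Y$. Once $B$ is in place, Proposition~\ref{prop_mmaf_and_cylindrical_brownian_motion} immediately gives that $\varphi(\Y,\mZ) = \W$ is a cylindrical Wiener process in $L_2$ starting at $g$, and that $\psi(\Y,\mZ) = (\Y,\varphi(\Y,\mZ))$ has the same law as $\X = (\Y,\W)$, since both are obtained by the same measurable construction from $\Y$ and an independent cylindrical Wiener process.

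The main obstacle I anticipate is the careful construction and measurability of the process $B$: the stopping times $\tau_k$ are $\Y$-dependent and only the post-$\tau_k$ parts of $B$ in the directions $e_k$ are dictated by $\mZ$, so I must glue together time-shifted pieces of $\mZ(e_k)$ with an independent noise in a way that yields a genuine $(\F_t)$-cylindrical Wiener process independent of $\Y$, rather than merely a process with the right finite-dimensional covariances. The cleanest route is probably to avoid constructing $B$ explicitly and instead argue directly that $\varphi(\Y,\mZ)$ satisfies i)--iii) of Definition~\ref{def:cylind_W_p} together with equation~\eqref{intro:equ_couple}: the martingale property and the quadratic variation computation $\langle \varphi_\cdot(\Y,\mZ)(h)\rangle_t = t\|h\|_{L_2}^2$ follow from the orthogonality of the $e_k$ and the fact that $(\Y_\cdot,e_k)_{L_2}$ is stopped at $\tau_k$ while $\mZ_{\cdot-\tau_k}(e_k)$ starts at $\tau_k$, so the two contributions add up without cross terms. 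The equality of laws of $\psi(\Y,\mZ)$ and $\X$ then reduces to checking that both pairs solve~\eqref{intro:equ_couple} driven by their respective cylindrical Wiener processes, which share the same conditional law given $\Y$ by the independence of $\mZ$ (resp. $B$) from $\Y$.
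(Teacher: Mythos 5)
Your series-convergence argument is exactly the paper's (Doob's inequality plus It\^o--Nisio, then conditioning on $\Y$), and your splicing construction of $B$ is feasible -- it mirrors the gluing done in Section~\ref{sec:coupling}, and is in fact easier here because the independence of $\mZ$ and $\Y$ is given rather than something to be proved. The genuine gap is in your final step, the equality $\law(\psi(\Y,\mZ))=\law(\Y,\W)$, for which you offer two justifications that both fail as stated. The first, that ``both pairs solve~\eqref{intro:equ_couple} driven by their respective cylindrical Wiener processes,'' is not sufficient: equation~\eqref{intro:equ_couple} constrains only the projection of the driving process onto the coalescing directions and says nothing about its orthogonal part, and it is crucially the \emph{independence} of that orthogonal part from $\Y$ that pins down the joint law. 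The assertion that~\eqref{intro:equ_couple} alone determines $\law(\Y,\W)$ is precisely Theorem~\ref{theo:coupling}, which is proved only in Section~\ref{sec:coupling} and whose proof itself requires first establishing the independent-complement representation (Proposition~\ref{pro_uniqueness_of_coupling}); it cannot be invoked here.

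Your second justification, that both pairs are ``obtained by the same measurable construction from $\Y$ and an independent cylindrical Wiener process,'' hides the real issue: a stochastic integral such as $\int_0^\cdot \pr_{\Y_s}^{\bot}\,\mathrm dB_s$ is defined through $L_2$-limits, not as a pathwise functional of $(\Y,B)$, so equality in law of $(\Y,B)$ and $(\Y,B')$ does not automatically transfer to the integrals. What repairs this -- and what is the core of the paper's own proof -- is a pathwise series representation: almost surely $\int_0^t \pr_{\Y_s}^{\bot}\,\mathrm d\theta_s=\sum_{k} e_k\, \I_{\{t\geq \tau_k\}}\bigl(\theta_t(e_k)-\theta_{\tau_k}(e_k)\bigr)$, where $\theta$ is the independent driver furnished by Proposition~\ref{prop_mmaf_and_cylindrical_brownian_motion}, restricted to $\ltz$. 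Once $\W-\Y$ and $\varphi(\Y,\mZ)-\Y$ are both written as such series, one conditions on $\Y=y$: for fixed $y\in\coal$, the families $\bigl\{\I_{\{t\geq\tau_k^y\}}\bigl(\theta_t(e_k^y)-\theta_{\tau_k^y}(e_k^y)\bigr)\bigr\}_{k\geq 1}$ and $\bigl\{\I_{\{t\geq\tau_k^y\}}\mZ_{t-\tau_k^y}(e_k^y)\bigr\}_{k\geq 1}$ are both families of independent Brownian motions started at the deterministic times $\tau_k^y$, hence equal in law, and the independence of $\theta$ (resp.\ $\mZ$) from $\Y$ then yields equality of the joint laws. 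Supplying this conditional-law comparison (or an equivalent argument expressing the integral as a limit in probability of measurable functionals of $\Y$ and countably many coordinates $B(h_j)$) is the missing ingredient; with it your plan goes through. A minor additional slip: $\sum_{k\geq 1}(e_k^y,h)_{L_2}^2$ equals $\|h\|_{L_2}^2-(h,\I_{[0,1]})_{L_2}^2$, not $\|\pr_{y_0}^{\bot}h\|_{L_2}^2$, which is zero here since $L_2(g)=L_2$; the bound by $\|h\|_{L_2}^2$ you use is of course still correct.
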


\begin{remark}
\label{explanation_phi}
Before giving the proof of the lemma, remark that the map $\varphi$  constructs a cylindrical Wiener process from $\Y$, by adding to $\Y$ some non-coalescing term. Actually,  for each $y \in \coal$, $\varphi(y,z)$ belongs to $\coal$ if and only if $z=0$. This statement is proved in Lemma~\ref{lem:set_coal_as_preimage}.
\end{remark}

\begin{proof}[Proof of Lemma~\ref{lem:phi_is_cylindricalWp}]
  Let us first show that the sum in~\eqref{formula_phi} converges almost surely in $\mC[0,\infty)$. Fixing $y \in \coal$ and $h \in L_2$, we define for every $n \geq 1$
\begin{align*}
R^{y,n}_t (h):= \sum_{k=1}^n  (e_k^y,h)_{L_2} \I_{\left\{ t \geq \tau_k^y \right\}}\mZ_{t- \tau_k^y}\left(e_k\right), \quad t\geq 0. 
\end{align*}
Since $\mZ(e_k)$, $k\geq 1$, are independent standard Brownian motions, one can easily check that $R^{y,n}_t (h)$, $t\geq 0$, is  a continuous square-integrable martingale with respect to the filtration generated by $\mZ_{t-\tau_k^y}(e_k)$, $k\geq 1$. As in the proof of Proposition~\ref{prop:xi_is_cylindricalWp}, one can show that the sequence of partial sums $\{R^{y,n}(h)\}_{n\geq 1}$ converges in $\mC[0,\infty)$ almost surely for each $y \in \coal$. By the independence of $\mZ(e_k)$, $k\geq 1$, and $\Y$, one can see that the series 
\[
  R^{\Y}_t(h):=\sum_{k=1}^\infty (e_k,h)_{L_2} \I_{\left\{ t \geq \tau_k \right\}} \mZ_{t-\tau_k}(e_k), \quad t\geq 0,
\]
also converges almost surely in $\mC[0,\infty)$.

Next, we claim that there exists a cylindrical Wiener process $\theta_t$, $t \geq 0$, in $\ltz$ starting at $0$ independent of $\Y$ such that
\begin{equation} 
  \label{equ_equation_with_cwp_in_l0}
  \W_t=\Y_t+\int_0^t\pr_{\Y_s}^\bot  \mathrm d \theta_s,\quad t\geq 0.
\end{equation}
Indeed, by Proposition~\ref{prop_mmaf_and_cylindrical_brownian_motion}, there is a cylindrical Wiener process $B_t$, $t \geq 0$, in $L_2$ starting at $0$ independent of $\Y$ and satisfying equation~\eqref{equ_cylindrical_wiener_process_theta}. Taking $\theta$ equal to the restriction of $B$ to the sub-Hilbert space $\ltz$, we easily check that $\int_0^t\pr_{\Y_s}^\bot  \mathrm d \theta_s=\int_0^t\pr_{\Y_s}^\bot  \mathrm d B_s$, $t\geq 0$, since for all $s \geq 0$, $\pr_{\Y_s}^\bot=\pr_{\ltz}\circ \pr_{\Y_s}^\bot$ almost surely. Furthermore, almost surely
\begin{align*}
  \int_0^t\pr_{\Y_s}^\bot  \mathrm d \theta_s = \sum_{k=1}^\infty  e_k \I_{\left\{ t \geq \tau_k \right\}} (\theta_{t \wedge \tau_k}(e_k) - \theta_{\tau_k} ( e_k )), \quad t\geq 0.
\end{align*}
For each fixed $y \in \coal$, the family
  \[ 
    \left\{\I_{\left\{ t\geq \tau_{k}^y \right\}}(\theta_{t \wedge \tau_k^y}(e_k^y) - \theta_{\tau_k^y} ( e_k^y )),\ t \geq 0,\ k \geq 1 \right\},
  \]
  has the same distribution as 
  \[
    \left\{ \I_{\left\{ t \geq \tau_{k}^y \right\}}\mZ_{t- \tau_k^y} ( e_k^y),\ t \geq 0,\  k \geq 1  \right\}.
  \]
Therefore, using the independence of $\Y$ and $\theta$ on the one hand and the independence of $\Y$ and $\mZ$ on the other hand, we get the equality
\begin{multline*}
  \law \left\{ \left(\Y_t, \int_0^t\pr_{\Y_s}^\bot  \mathrm d \theta_s \right) , t\geq 0 \right\} \\= \law \left\{ \left(\Y_t, \sum_{k=1}^\infty  e_k \I_{\left\{ t \geq \tau_k \right\}} \mZ_{t- \tau_k} ( e_k) \right) , t\geq 0 \right\}.
\end{multline*}
This relation and equalities~\eqref{formula_phi} and~\eqref{equ_equation_with_cwp_in_l0} yield that the law of $\X=(\Y, \W)$ is equal to the law of $\psi(\Y, \mZ)=(\Y, \varphi(\Y, \mZ))$. In particular, $\varphi(\Y, \mZ)$ is a cylindrical Wiener process in $L_2$ starting at $g$. 
\qed
\end{proof}

Moreover, there exists a measurable map $\widehat{\varphi}: \bE \to \mC[0,\infty)^{\N_0}$ such that 
\[
  \widehat{\varphi}(\Y,\widehat{\mZ})=\reallywidehat{\varphi(\Y,\mZ)}.
\]
almost surely. Let us define  $\Psi:\bG \times \bF \to \bE$ by
\begin{equation}
\label{def:psi}
\Psi(y,z):=\left(y,\widehat{\varphi}(y,z)\right).
\end{equation}
It follows from the last two equalities and from Lemma~\ref{lem:phi_is_cylindricalWp} that
\begin{corollary}
The laws of $\Psi(\Y,\widehat{\mZ})$ and of $\widehat{\X}=(\Y,\widehat{\W})$ are the same. 
\end{corollary}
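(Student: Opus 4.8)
The plan is to obtain the corollary by pushing the equality of laws established in Lemma~\ref{lem:phi_is_cylindricalWp} forward through the common ``coordinate'' rule $\reallywidehat{\,\cdot\,}$ that sends a cylindrical process to its sequence of components against the fixed basis $\{h_j,\ j\geq 0\}$. First I would record, from the definition~\eqref{def:psi} of $\Psi$ together with the defining property $\widehat{\varphi}(\Y,\widehat{\mZ})=\reallywidehat{\varphi(\Y,\mZ)}$, that
\[
\Psi(\Y,\widehat{\mZ})=\left(\Y,\widehat{\varphi}(\Y,\widehat{\mZ})\right)=\left(\Y,\reallywidehat{\varphi(\Y,\mZ)}\right)\quad\text{a.s.}
\]
On the other hand, $\widehat{\X}=(\Y,\widehat{\W})$ with $\widehat{\W}_j(t)=\W_t(h_j)$. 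Thus both $\widehat{\X}$ and $\Psi(\Y,\widehat{\mZ})$ are produced by applying the \emph{same} deterministic rule --- pair $\Y$ with the sequence of evaluations of a cylindrical process against $h_0,h_1,\dots$ --- to the two pairs $(\Y,\W)$ and $(\Y,\varphi(\Y,\mZ))$ respectively.

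Next I would invoke Lemma~\ref{lem:phi_is_cylindricalWp}, which yields $\law(\Y,\W)=\law(\Y,\varphi(\Y,\mZ))$. Since a cylindrical process is specified only through finite collections of linear functionals, this equality is to be read as the coincidence, for every finite family $h^{(1)},\dots,h^{(m)}\in L_2$, of the joint laws of the two $\mC([0,\infty),\Li)\times\mC[0,\infty)^m$-valued random elements
\begin{gather*}
\left(\Y,\ \W_\cdot(h^{(1)}),\dots,\W_\cdot(h^{(m)})\right) \\
\text{and}\quad \left(\Y,\ \varphi_\cdot(\Y,\mZ)(h^{(1)}),\dots,\varphi_\cdot(\Y,\mZ)(h^{(m)})\right).
\end{gather*}
Taking $h^{(i)}=h_{i}$ and letting $m$ range over $\N$, these are exactly the finite-dimensional projections of $\widehat{\X}$ and of $\Psi(\Y,\widehat{\mZ})$ onto the first coordinate together with finitely many components of the second coordinate.

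Finally, I would conclude by the standard fact that the law of a random element of the Polish space $\bE=\mC([0,\infty),\Li)\times\mC[0,\infty)^{\N_0}$ is determined by the joint laws of its first coordinate together with any finite number of components of its second coordinate, since the product topology on $\mC[0,\infty)^{\N_0}$ is generated by the coordinate projections. As these coincide for $\widehat{\X}$ and $\Psi(\Y,\widehat{\mZ})$ by the previous step, their laws on $\bE$ agree, which is the assertion. The only genuinely delicate point is the one isolated above: the equality of laws in Lemma~\ref{lem:phi_is_cylindricalWp} concerns cylindrical processes, which are not themselves random elements of a standard Borel space, so it must first be transferred to the honest $\bE$-valued random elements through finite-dimensional distributions before the product-space characterization applies; everything else is the routine preservation of equality in law under a common measurable map.
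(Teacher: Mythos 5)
Your proposal is correct and takes essentially the same approach as the paper, which derives the corollary directly from the identity $\Psi(\Y,\widehat{\mZ})=\bigl(\Y,\reallywidehat{\varphi(\Y,\mZ)}\bigr)$ and the equality $\law(\Y,\varphi(\Y,\mZ))=\law(\Y,\W)$ of Lemma~\ref{lem:phi_is_cylindricalWp}. Your extra step --- reading the cylindrical equality of laws through finite-dimensional distributions against $h_0,h_1,\dots$ and then using that cylinder sets determine measures on $\bE$ --- is precisely the implicit content of the paper's one-line argument, spelled out.
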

Hence Property (P4) is satisfied. It remains to check (P3). 
By equalities~\eqref{equ_map_for_widehat_t} and~\eqref{hat_equality}, we  compute $\widehat{\T}(\Psi(\Y,\widehat{\mZ}))$:
\[
  \widehat{\T}(\Psi(\Y,\widehat{\mZ}))_j(t)=\sum_{ k=1 }^{ \infty }\sum_{ i=1 }^{ \infty } ( e_k,h_j )_{L_2}( e_k,h_i )_{L_2}\reallywidehat{\varphi(\Y,\mZ)}_i(t+\tau_{k}), \quad t\geq 0,\ \ j\geq 1.
\]

\begin{proposition}
\label{prop:T_of_psi_is_identity}
Almost surely $\widehat{\T}(\Psi(\Y,\widehat{\mZ}))=\widehat{\mZ}$. 
\end{proposition}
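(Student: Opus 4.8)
The plan is to verify the identity componentwise and pathwise. I would fix a deterministic $y \in \coal$, compute $\widehat{\T}(\Psi(y,\widehat{\mZ}))_j(t)$ for each $j \geq 1$ and $t \geq 0$ using the double-sum formula displayed just before the proposition, check that it equals $\widehat{\mZ}_j(t)=\mZ_t(h_j)$, and finally substitute $y=\Y$. The substitution is legitimate almost surely, since $\Y \in \coal$ a.s. by Lemma~\ref{lem:mmaf_belongs_to_coal} and since $e_k^{\Y}$ and $\tau_k^{\Y}$ are measurable functions of $\Y$. This pathwise device is precisely what lets us treat the random basis vectors $e_k$ as fixed elements of $L_2$ while applying the linear evaluations of the cylindrical Wiener processes.

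The heart of the computation is evaluating $\varphi_{t+\tau_k^y}(y,\mZ)(e_k^y)$. First I would apply formula~\eqref{formula_phi} with $h=e_k^y$: the infinite sum over the basis collapses by orthonormality, since $(e_l^y,e_k^y)_{L_2}=\delta_{lk}$, leaving $\varphi_{s}(y,\mZ)(e_k^y)=(y_s,e_k^y)_{L_2}+\I_{\{s\geq \tau_k^y\}}\mZ_{s-\tau_k^y}(e_k^y)$. Setting $s=t+\tau_k^y$ with $t\geq 0$ makes the indicator equal to $1$ and the time shift gives $\mZ_t(e_k^y)$. It then remains to kill the term $(y_{t+\tau_k^y},e_k^y)_{L_2}$, which vanishes because $e_k^y\in H_k^y=L_2(y_{\tau_k^y})^{\bot}$ by Lemma~\ref{lem:onb}, whereas $y_s\in L_2(y_{\tau_k^y})$ for every $s\geq \tau_k^y$ by the coalescence property (G4). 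This is exactly the vanishing already recorded in the text before the definition of $\xi$. Hence $\varphi_{t+\tau_k^y}(y,\mZ)(e_k^y)=\mZ_t(e_k^y)$.

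Plugging this into the double-sum formula, the inner sum $\sum_i (e_k^y,h_i)_{L_2}\reallywidehat{\varphi(y,\mZ)}_i(t+\tau_k^y)$ reassembles $\varphi_{t+\tau_k^y}(y,\mZ)(e_k^y)=\mZ_t(e_k^y)$ by linearity of the cylindrical Wiener process $\varphi(y,\mZ)$ together with the expansion $e_k^y=\sum_i (e_k^y,h_i)_{L_2} h_i$. This leaves $\widehat{\T}(\Psi(y,\widehat{\mZ}))_j(t)=\sum_k (e_k^y,h_j)_{L_2}\mZ_t(e_k^y)$. Since $\{e_k^y,\ k\geq 1\}$ is an orthonormal basis of $\ltz$ and $h_j\in \ltz$ for $j\geq 1$, the right-hand side is $\mZ_t$ evaluated at $\sum_k (e_k^y,h_j)_{L_2}e_k^y=h_j$, that is $\mZ_t(h_j)=\widehat{\mZ}_j(t)$, once more by linearity of $\mZ$. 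Substituting $y=\Y$ concludes.

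I expect the only genuine obstacle to be the rigorous justification of exchanging the (doubly) infinite summations with the linear evaluations of the cylindrical Wiener processes $\varphi(y,\mZ)$ and $\mZ$, together with the almost-sure convergence of the relevant series in $\mC[0,\infty)$. This is the same type of argument already used to relate $\W_t(h)$ to $\widehat{\W}$ and established in the proofs of Proposition~\ref{prop:xi_is_cylindricalWp} and Lemma~\ref{lem:phi_is_cylindricalWp}, so I would invoke those convergence results rather than redo them; once convergence is secured, the algebraic collapse by orthonormality is immediate.
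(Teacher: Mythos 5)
Your proposal is correct and follows essentially the same route as the paper's proof: collapse the inner sum over $i$ using the orthonormal basis $\{h_i\}$ of $\ltz$, reduce $\varphi_{t+\tau_k}(\Y,\mZ)(e_k)$ to $\mZ_t(e_k)$ via formula~\eqref{formula_phi}, the orthonormality $(e_l,e_k)_{L_2}=\delta_{lk}$ and the vanishing of $(\Y_{t+\tau_k},e_k)_{L_2}$ guaranteed by Lemma~\ref{lem:onb} and property (G4), and then reassemble $\sum_k (e_k,h_j)_{L_2}\mZ_t(e_k)=\mZ_t(h_j)$ using that $\{e_k\}$ is an orthonormal basis of $\ltz$. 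The only cosmetic differences are the order of the algebraic steps and your explicit pathwise device of fixing $y\in\coal$ before substituting $y=\Y$, which the paper handles implicitly (together with a one-line reduction, via continuity in $t$, to fixed $t$ and $j$).
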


\begin{proof}
  By  continuity in $t$ of $\widehat{\T}(\Psi(\Y,\widehat{\mZ}))_j(t)$ and $\widehat{\mZ}_j(t)$,  it is enough to show that for each $t \geq 0$ and $j \geq 1$ almost surely $\widehat{\T}(\Psi(\Y,\widehat{\mZ}))_j(t)=\widehat{\mZ}_j(t)$. 
Since $\{h_i,\ i\geq 1\}$ is an orthonormal basis of $\ltz$, we have
\begin{align*}
\widehat{\T}(\Psi(\Y,\widehat{\mZ}))_j(t)
&=\sum_{ k=1 }^{ \infty }\sum_{ i=1 }^{ \infty } ( e_k,h_j )_{L_2}( e_k,h_i )_{L_2}\varphi_{t+\tau_{k}}(\Y,\mZ) (h_i) \\
&=\sum_{ k=1 }^{ \infty } ( e_k,h_j )_{L_2}\varphi_{t+\tau_{k}}(\Y,\mZ) (e_k).
\end{align*} 
By~\eqref{formula_phi}  and Lemma~\ref{lem:onb}, we have 
\begin{align*}
\varphi_{t+\tau_k}(\Y,\mZ)(e_k) &=  (\Y_{t+\tau_k},e_k)_{L_2} + \sum_{l=1}^\infty (e_l,e_k)_{L_2} \I_{\left\{ t+\tau_k \geq \tau_l\right\}} \mZ_{t+\tau_k- \tau_l} ( e_l)\\
&= \I_{\left\{ t+\tau_k \geq \tau_k\right\}} \mZ_{t+\tau_k- \tau_k} ( e_k)=\mZ_t(e_k).
\end{align*}
Hence, almost surely
\begin{align*}
\widehat{\T}(\Psi(\Y,\widehat{\mZ}))_j(t)=\sum_{k=1}^\infty  (e_k, h_j)_{L_2}\mZ_t \left( e_k \right)= \mZ_t(h_j)=\widehat{\mZ}_j(t),
\end{align*}
because $\{e_k,\ k \geq 1\}$ is an orthonormal basis of $\ltz$.
\qed
\qed
\end{proof}

Thus, Property (P3) holds.  Hence, by Proposition~\ref{prop:existence_regular}, the probability kernel $p$ defined by 
\begin{equation}
\label{defin:p}
  p(A,z):=  \P{\Psi(\Y,z) \in A}=\P{ \left( \Y,\widehat{\varphi}\left( \Y,z \right)\right)\in A }
\end{equation}
for all $A \in \mathcal B(\bE)$ and $z \in \bF$, is a regular conditional probability of $\widehat{\X}$ given $\widehat{\T}(\widehat{\X})$.

\subsection{Value of $p$ along a sequence of Ornstein-Uhlenbeck processes}
\label{parag:value_of_regular_conditional_probability_along_a_sequence}

According to Proposition~\ref{prop:existence_regular}, it remains to show the following to complete the proof of Theorem~\ref{theo:infinite_dim}. Let $\left\{\xi^n\right\}_{n\geq 1}$ be the sequence defined by~\eqref{def_OU_process} and 
 independent of $\Y$. Let $\Psi$ be defined by~\eqref{def:psi}. Then $\Psi(\Y, \xi^n)$ converges in distribution to $(\Y,\widehat{\Y})$.

For $y \in \coal$ we consider 
\[
  \Psi(y,\xi^n)=(y,\widehat{\varphi}(y,\xi^n)),
\]
where the map $\widehat{\varphi}:\bE \to \mC[0,\infty)^{\N_0}$ was defined in Section~\ref{parag:quadruple}. Since for every $n\geq 1$ the law of $\xi^n$ is absolutely continuous with respect to  $\p^{\widehat{\xi}}$ (which is equal to $\p^{\widehat{\mZ}}$), we have that for almost all $y \in \coal$ with respect to  $\p^\Y$
\begin{equation} 
  \label{equ_equality_for_phi_of_xi}
  \widehat{\varphi}_j\left(y,\xi^n\right)=(y_{\cdot },h_j)+\sum_{ k=1 }^{ \infty } \sum_{ l=1 }^{ \infty } ( e_k^y,h_j )_{L_2}( h_l,e_k^y )_{L_2}\I_{\left\{ \cdot \geq \tau_k^y \right\}}\xi_l^n(\cdot -\tau_k^y)
\end{equation}
for each $j\geq 0$, where the series converges in $\mC[0,\infty)$ almost surely. Without loss of generality, we may assume that equality~\eqref{equ_equality_for_phi_of_xi} holds for all $y \in \coal$. Otherwise, we can work with a measurable subset of $\coal$ of $\p^\Y$-measure one  for which equality~\eqref{equ_equality_for_phi_of_xi} holds.

\begin{proposition}
\label{prop:convergence_along_xi_n}
Let $\eps \in (0,1)$ and $y \in \coal$ be such that $\sum_{k=1}^\infty (\tau_k^y)^{1-\eps}< \infty$.  Then the  sequence of processes $\Psi(y,\xi^n)$, $n \geq 1$, converges in distribution to $(y,\widehat{y})$ in $\bE=\mC ([0, \infty), \Li) \times \mCz$, where $\widehat{y}=(( y_{\cdot },h_j )_{L_2})_{j\geq 0}$. 
\end{proposition}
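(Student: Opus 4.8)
The plan is to exploit that the first coordinate of $\Psi(y,\xi^n)=(y,\widehat{\varphi}(y,\xi^n))$ is the \emph{fixed deterministic} element $y\in\coal$, so that convergence in distribution to the deterministic limit $(y,\widehat{y})$ reduces to showing that $\widehat{\varphi}(y,\xi^n)\to\widehat{y}$ in probability in $\mCz$. Since $\mCz$ carries the product topology, it is enough to prove, for every coordinate $j\ge 0$ and every $T>0$, that $R_j^n:=\widehat{\varphi}_j(y,\xi^n)-\widehat{y}_j$ satisfies $\sup_{t\in[0,T]}|R_j^n(t)|\to 0$ in probability. For $j=0$ this is immediate: each $e_k^y$ is orthogonal to $h_0=\I_{[0,1]}$ by~\eqref{eky}, so $(e_k^y,h_0)_{L_2}=0$ and hence $R_0^n\equiv 0$. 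For $j\ge1$, formula~\eqref{equ_equality_for_phi_of_xi} gives the explicit representation
\[
R_j^n(t)=\sum_{k=1}^\infty (e_k^y,h_j)_{L_2}\,\I_{\{t\ge\tau_k^y\}}\,\zeta_k^n(t-\tau_k^y),\qquad
\zeta_k^n(s):=\sum_{l=1}^\infty (e_k^y,h_l)_{L_2}\,\xi_l^n(s),
\]
where $\zeta_k^n$ is the projection of the Ornstein--Uhlenbeck sequence $\xi^n$ onto the direction $e_k^y$. All processes involved are Gaussian, so I would aim at the stronger statement $\E{\sup_{t\in[0,T]}|R_j^n(t)|^2}\to 0$.

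The guiding intuition is the comparison with the Brownian case $\alpha_l^n\equiv 0$. If the $\xi_l^n$ were replaced by the driving standard Brownian motions $W_l$, the orthonormality of $\{e_k^y\}_{k\ge1}$ would make the projected processes $\beta_k(s):=\sum_l(e_k^y,h_l)_{L_2}W_l(s)$ \emph{independent} standard Brownian motions (their cross-covariances equal $(e_k^y,e_{k'}^y)_{L_2}(s\wedge s')=0$), whence $t\mapsto\sum_k(e_k^y,h_j)_{L_2}\I_{\{t\ge\tau_k^y\}}\beta_k(t-\tau_k^y)$ would be a continuous square-integrable martingale with quadratic variation $\sum_k(e_k^y,h_j)_{L_2}^2(t-\tau_k^y)^+$. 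Doob's inequality together with $\sum_k(e_k^y,h_j)_{L_2}^2=\|h_j\|_{L_2}^2=1$ (since $\{e_k^y\}_{k\ge1}$ is an orthonormal basis of $\ltz$ and $h_j\in\ltz$) would bound the tail $\sum_{k\ge K}$ uniformly by $4T\sum_{k\ge K}(e_k^y,h_j)_{L_2}^2\to0$, and the convergence would follow with no extra assumption. The whole difficulty is therefore concentrated in the discrepancy between the true Ornstein--Uhlenbeck processes and their martingale parts.

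Concretely, I would split the sum over $k$ at a level $K$. For the finite head $k<K$ I would show that each term tends to $0$ in $L_2(\Omega,\mC[0,T])$ as $n\to\infty$: by (O2) and dominated convergence over $l$ — the summand $(e_k^y,h_l)_{L_2}^2\,\Var{\xi_l^n(s)}$ being dominated by $(e_k^y,h_l)_{L_2}^2\,s$ and tending to $0$ for each $l$ — one obtains $\Var{\zeta_k^n(s)}\le\sum_l(e_k^y,h_l)_{L_2}^2\tfrac{1}{2\alpha_l^n}\to0$ for each fixed $k$, which I would upgrade to uniform convergence on $[0,T]$ by a Kolmogorov continuity estimate. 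The delicate point is the tail $k\ge K$ uniformly in $n$: the Ornstein--Uhlenbeck drift destroys both the martingale property and the independence across $k$, since the cross-covariances $\sum_l(e_k^y,h_l)_{L_2}(e_{k'}^y,h_l)_{L_2}\,\rho_l^n$ (with $\rho_l^n$ the OU covariance) no longer vanish once $\rho_l^n$ depends on $l$. I expect this tail to be controlled by a Kolmogorov-type modulus-of-continuity estimate in which the half-order Hölder behaviour of each freshly started process $\zeta_k^n$ near $\tau_k^y$, combined with the hypothesis $\sum_{k}(\tau_k^y)^{1-\eps}<\infty$, produces a bound on the tail of $\E{\sup_{t\le T}|R_j^n(t)|^2}$ that is summable in $k$ and uniform in $n$, hence arbitrarily small for $K$ large.

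The main obstacle is exactly this uniform-in-$n$ control of the tail. In the Brownian comparison everything collapses to a single application of Doob's inequality, but the Ornstein--Uhlenbeck corrections reintroduce non-martingale terms and correlations between the summands, and it is only here that the summability of $(\tau_k^y)^{1-\eps}$ — compensating for the accumulation of the coalescence times $\tau_k^y$ at $0$ — is really needed. Once the head converges to $0$ for each fixed $K$ and the tail is shown to be uniformly small in $n$, a standard truncation argument yields $\E{\sup_{t\le T}|R_j^n(t)|^2}\to0$, hence $\sup_{t\le T}|R_j^n(t)|\to0$ in probability, and therefore the claimed convergence in distribution of $\Psi(y,\xi^n)$ to $(y,\widehat{y})$.
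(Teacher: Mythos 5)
Your reduction is the same as the paper's (the first coordinate is the deterministic $y$, the product topology on $\mCz$ lets you argue coordinatewise, and $R^n_0\equiv 0$ because each $e_k^y$ in~\eqref{eky} integrates to zero), and you have correctly located the difficulty in the sum over $k$. But the mechanism you propose for the tail does not work, and that is exactly where the whole proof lives. A head/tail truncation in $k$ cannot be closed as you describe: the raw tail term $\I_{\{t\geq\tau_k^y\}}\zeta_k^n(t-\tau_k^y)$ has variance of order $t-\tau_k^y$ (up to $T$), \emph{not} of order $\tau_k^y$, so the ``half-order H\"older behaviour near $\tau_k^y$'' of a freshly started process gives no smallness for the term itself; smallness of the tail must therefore come from cancellation across $k$, which in the Brownian comparison is exactly what independence plus Doob provides. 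For the Ornstein--Uhlenbeck processes those cross-covariances do not vanish, and since only $\sum_k(e_k^y,h_j)_{L_2}^2$ converges (the series $\sum_k|(e_k^y,h_j)_{L_2}|$ need not), neither Cauchy--Schwarz nor any termwise estimate yields a bound on $\sum_{k\geq K}$ that is uniform in $n$ and small in $K$. The hypothesis $\sum_k(\tau_k^y)^{1-\eps}<\infty$ simply does not see the raw tail, so the step you yourself call ``the main obstacle'' is left genuinely open.

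The two ideas that close this gap in the paper are different from a truncation. First, instead of splitting in $k$, subtract the \emph{unshifted} processes: $R^n_j=R^{n,1}_j+R^{n,2}_j$, where $R^{n,1}_j$ carries $\xi^n_l(t)$ and $R^{n,2}_j$ carries the differences $\I_{\{t\geq\tau_k^y\}}\xi^n_l(t-\tau_k^y)-\xi^n_l(t)$. By completeness of $\{e_k^y\}$ in $\ltz$ (transferred to $\xi^n$ by the absolute continuity (B1)), $R^{n,1}_j$ collapses to the single coordinate $\xi^n_j$, which tends to $0$ by Lemma~\ref{lem_convergence_of_ou_processes}; no tail estimate is needed for it at all. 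Second, the cross-$k$ correlations in $R^{n,2}_j$ are not estimated but \emph{decoupled} by a Parseval argument: writing $R^{n,2}_j(t)=(V^n_t,h_j)_{L_2}$ with $V^n_t=\sum_k e_k^y\sum_l(e_k^y,h_l)_{L_2}\bigl(\I_{\{t\geq\tau_k^y\}}\xi^n_l(t-\tau_k^y)-\xi^n_l(t)\bigr)$, orthonormality of $\{e_k^y\}$ and independence in $l$ give $\E{(R^{n,2}_j(t))^2}\leq\E{\|V^n_t\|_{L_2}^2}=\sum_k\sum_l(e_k^y,h_l)_{L_2}^2E^n_{k,l}(t)$, and now each summand \emph{is} a genuine increment over a window of length at most $\tau_k^y$, so $E^n_{k,l}(t)\leq t\wedge\tau_k^y$ by~\eqref{ornst_uhl_2} and the hypothesis yields the uniform-in-$n$ H\"older-$\eps$ bound $4(t-s)^\eps\sum_k(\tau_k^y)^{1-\eps}$ for increments. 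Finally, note that the paper never proves your stronger claim $\E{\sup_{t\leq T}|R^n_j(t)|^2}\to 0$: Gaussianity turns the variance bound into a Kolmogorov--Chentsov tightness estimate (Lemma~\ref{lem_tight1}), pointwise convergence $\E{(R^n_j(t))^2}\to 0$ follows by dominated convergence (Lemma~\ref{lem_L2}), and tightness plus finite-dimensional convergence to the deterministic limit already gives $R^n\stackrel{d}{\to}0$ in $\mCz$. Without the subtraction-plus-Parseval step your argument cannot be completed, so the proposal has a genuine gap at its central point.
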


Let us fix $y \in \coal$ satisfying the assumption of Proposition~\ref{prop:convergence_along_xi_n}. Before starting the proof, we define for all $j\geq 0$
\begin{align*}
  R^n_j(t):= \sum_{ k=1 }^{ \infty } \sum_{ l=1 }^{ \infty } ( e_k^y,h_j )_{L_2}( h_l,e_k^y )_{L_2}\I_{\left\{ t\geq \tau_k^y \right\}}\xi_l^n(t -\tau_k^y), \quad t\geq 0,
\end{align*}
and $R_t^n:=(R^n_j(t))_{j\geq 0}$, $t\geq 0$.  Remark that $R^n_0=0$. 
Note that it is sufficient to prove that 
\begin{align}
\label{conv_Rn}
R^n \stackrel{d}{\to } 0 \quad  \mbox{in} \ \ \mCz, \ \ n \to \infty. 
\end{align}
Indeed, this will imply that 
\begin{align*}
  \Psi(y,\xi^n)=\left(y,\widehat{\varphi}(y,\xi^n)\right)=\left( y,\widehat{y}+R^n \right) \stackrel{d}{\to } (y,\widehat{y}) \quad \mbox{in}\ \ \bE.
\end{align*}

Let us first prove some auxiliary lemmas.

\begin{lemma} 
  \label{lem_convergence_of_ou_processes}
  The sequence of random elements $\{ \xi^n \}_{n\geq 1}$ converges in distribution to $0$ in $\mCo$.   
\end{lemma}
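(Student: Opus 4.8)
The plan is to reduce the statement to a one–dimensional estimate on each Ornstein--Uhlenbeck coordinate and then to exploit that the driving noise $\widehat\xi_j$ does not depend on $n$. Since the limit is the constant $0$, convergence in distribution in $\mCo$ is equivalent to convergence in probability, i.e. to $d(\xi^n,0)\to 0$ in probability, where $d$ is the product Fréchet metric on $\mCo=\mC[0,\infty)^{\N}$. As each summand of $d(\cdot,0)$ is bounded by $2^{-j}$, a tail truncation together with dominated convergence shows that it suffices to prove, for every fixed $j\geq 1$, that $\xi^n_j\to 0$ in $\mC[0,\infty)$, that is, uniformly on every compact interval $[0,T]$. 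I would in fact establish this almost surely, which is stronger than needed, the point being that the same Brownian motion $\widehat\xi_j$ drives $\xi^n_j$ for all $n$.

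Next, fix $j\geq 1$ and $T>0$. For $n\geq T$ the indicator $\I_{\{t\leq n\}}$ equals $1$ on $[0,T]$, so on that interval $\xi^n_j$ solves the homogeneous Ornstein--Uhlenbeck equation with rate $\alpha:=\alpha^n_j$ driven by the fixed standard Brownian motion $W:=\widehat\xi_j$, whence the explicit representation $\xi^n_j(t)=\int_0^t e^{-\alpha(t-s)}\,\mathrm dW_s$. Integrating by parts against the deterministic kernel $s\mapsto e^{-\alpha(t-s)}$ (a pathwise identity since the integrand is deterministic and $W_0=0$), I would rewrite this as
\[
\xi^n_j(t)=\alpha\int_0^t e^{-\alpha(t-s)}\bigl(W_t-W_s\bigr)\,\mathrm ds+e^{-\alpha t}W_t,
\]
which displays $\xi^n_j(t)$ as an average of the increments $W_t-W_s$ against the (almost probability) kernel $\alpha e^{-\alpha(t-s)}$, concentrating at scale $1/\alpha$ near $s=t$, plus a negligible boundary term $e^{-\alpha t}W_t$.

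From here a pathwise bound finishes the argument. Writing $\omega_T(\delta)$ for the modulus of continuity of $W$ on $[0,T]$ and $M_T:=\sup_{t\leq T}|W_t|$, and splitting the convolution integral at $s=t-\delta$ (using $|W_t-W_s|\leq\omega_T(\delta)$ on the near window and $\alpha\int_0^{t-\delta}e^{-\alpha(t-s)}\,\mathrm ds\leq e^{-\alpha\delta}$ on the far part), one gets, for every $\delta\in(0,T]$,
\[
\sup_{t\leq T}\bigl|\xi^n_j(t)\bigr|\leq 2\,\omega_T(\delta)+3\,M_T\,e^{-\alpha\delta}.
\]
Choosing $\delta=(\alpha^n_j)^{-1/2}$ and invoking assumption (O2), namely $\alpha^n_j\to\infty$, both terms tend to $0$ almost surely: the first by uniform continuity of $W$ on $[0,T]$ since $\delta\to 0$, and the second because $\alpha^n_j\delta=\sqrt{\alpha^n_j}\to\infty$ while $M_T<\infty$ a.s. This yields $\xi^n_j\to 0$ a.s. in $\mC[0,\infty)$ for every $j$, hence $\xi^n\to 0$ a.s. in $\mCo$ and, a fortiori, in distribution.

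The hard part will be producing a supremum bound that actually vanishes. The naive route through Itô's formula for $(\xi^n_j)^2$ and the Burkholder--Davis--Gundy inequality loses the crucial cancellation between the $-2\alpha\int\xi^2$ drift and the $\mathrm dt$ term, and only bounds $\E{\sup_{t\leq T}(\xi^n_j)^2}$ by a constant times $T$, which does not go to $0$. The integration-by-parts representation is what saves the argument, since it converts the supremum estimate into a modulus-of-continuity estimate, capitalizing on the concentration of the exponential kernel at scale $1/\alpha$. I would also emphasize that only (O2) enters this proof; condition (O1) plays no role in the convergence and is needed solely for the absolute continuity required by (B1).
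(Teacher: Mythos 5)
Your proof is correct, but it takes a genuinely different route from the paper's. The paper runs the standard weak-convergence scheme: it first proves tightness of $\{\xi^n\}_{n\geq 1}$ in $\mCo$ coordinate by coordinate, using the covariance bound $\E{(\xi^n_j(t)-\xi^n_j(s))^2}\leq \frac{1}{\alpha^n_j}\wedge(t-s)$, Gaussianity to pass to fourth moments, and the Kolmogorov--Chentsov criterion; it then identifies every subsequential limit as $0$ from $\E{(\xi^n_j(t))^2}\leq t/\alpha^n_j\to 0$ and concludes via Prohorov's theorem. That argument lives entirely at the level of laws and would apply verbatim even if the $\xi^n$ were defined on unrelated probability spaces. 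You instead exploit the coupling built into~\eqref{def_OU_process} --- all the $\xi^n_j$ are driven by the same Brownian motion $\widehat{\xi}_j$ --- to get a pathwise statement: your integration-by-parts identity $\xi^n_j(t)=\alpha\int_0^t e^{-\alpha(t-s)}(W_t-W_s)\,\mathrm ds+e^{-\alpha t}W_t$ is valid, the splitting of the exponential kernel at scale $\delta$ does give $\sup_{t\leq T}|\xi^n_j(t)|\leq 2\,\omega_T(\delta)+3M_T e^{-\alpha\delta}$, and the choice $\delta=(\alpha^n_j)^{-1/2}$ (legitimate for large $n$, since (O2) forces $\alpha^n_j>0$ eventually) makes both terms vanish almost surely. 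Almost sure coordinatewise convergence gives a.s. convergence in the product topology, hence convergence in distribution, with no tightness or compactness machinery at all. What your route buys is a strictly stronger conclusion (a.s. uniform convergence on compacts) by elementary pathwise estimates; what the paper's route buys is robustness, since it uses only the laws of the $\xi^n_j$ and not the common driving noise --- though for the way the lemma is used (verifying condition (B2)) either conclusion suffices. Your closing observation that only (O2) enters is also consistent with the paper: its proof likewise uses only (O2), while (O1) is reserved for the absolute-continuity claim of Remark~\ref{rem_conditions_a_and_b}.
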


\begin{proof} 
  In order to prove the lemma, we first show that the sequence $\{ \xi^n \}_{n\geq 1}$ is tight in $\mCo$. This will imply that the sequence $\{ \xi^n \}_{n\geq 1}$ is relatively compact, by Prohorov's theorem. Then we will show that every (weakly) convergent subsequence of $\{ \xi^n \}_{n\geq 1}$ converges to $0$. This will immediately yield that $\xi^n \stackrel{d}{\to }0$ in $\mCo$.

  According to \cite[Proposition~3.2.4]{Ethier:1986}, the tightness of $\{ \xi^n \}_{n\geq 1}$ will follow from the tightness of $\{ \xi^n_j \}_{n\geq 1}$ in $\mC[0,\infty)$ for every $j\geq 1$. So, let $j\geq 1$ and $T>0$ be fixed. Since the covariance of Ornstein-Uhlenbeck processes is well-known, one can easily check that for every $n \geq 1$ and every $0\leq s\leq t\leq n$, 
\begin{equation}
\label{ornst_uhl_2}
\E{\left(\xi_j^n(t) -\xi_j^n (s)  \right)^2} \leq \frac{1}{\alpha_j^n}\wedge (t-s),
\end{equation}
where $ \frac{1}{ 0 }:=+\infty$.
  Since $\xi^n_j$ is a Gaussian process, it follows that for every $0\leq s\leq t\leq T$ and every $n \geq T$, 
    \[
    \E{ \left( \xi^n_j(t)-\xi^n_j(s) \right)^4 }\leq 3\E{ \left( \xi^n_j(t)-\xi^n_j(s) \right)^2 }^2\leq 3(t-s)^2.
  \]
  Moreover, $\xi^n_j(0)=0$. Hence, by  Kolmogorov-Chentsov tightness criterion (see e.g.~\cite[Corollary 16.9]{Kallenberg:2002}), the sequence of processes $\{ \xi^n_j \}_{n\geq 1}$ restricted to $[0,T$] is tight in $\mC[0,T]$. Since $T>0$ was arbitrary, we get that $\{ \xi^n_j \}_{n\geq 1}$ is tight in $\mC[0,\infty)$. Hence, $\{ \xi^n \}_{n\geq 1}$ is tight in $\mCo$.
  
  Next, let $\{ \xi^n \}_{n\geq 1}$ converges in distribution to $\xi^\infty$ in $\mCo$ along a subsequence $N \subseteq \N$. Then for every $t\geq 0$ and $j\geq 1$ $\{ \xi^n_j(t) \}_{n\geq 1}$ converges in distribution to $\xi^\infty_j(t)$ in $\R$ along $N$. But on the other hand, for each $n \geq t$, 
\[
  \E{ (\xi^n_j(t))^2}\leq \frac{t}{ \alpha^n_j } \to 0,\quad  n\to\infty,
\]
by~\eqref{ornst_uhl_2} and Assumption (O2) in Section~\ref{parag:statement_main_result}. Hence, $\xi^\infty_j(t)=0$ almost surely for all $t\geq 0$ and $j\geq 1$. Thus, we have obtained that $\xi^\infty=0$, and therefore, $\xi^n \stackrel{d}{\to } 0$ in $\mCo$ as $n\to\infty$.
\qed
\end{proof}

To prove that $\{ R^n \}_{n\geq 1}$ converges to $0$, we will use the same argument as in the proof of Lemma~\ref{lem_convergence_of_ou_processes}. So, we start from the tightness of $\{ R^n \}$.

\begin{lemma}
\label{lem_tight1}
Under the assumption of Proposition~\ref{prop:convergence_along_xi_n}, the sequence $\{R^n\}_{n\geq 0}$ is tight in $\mCz$. 
\end{lemma}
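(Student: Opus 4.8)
The plan is to mirror the proof of Lemma~\ref{lem_convergence_of_ou_processes}. By \cite[Proposition~3.2.4]{Ethier:1986}, it suffices to prove that for every fixed $j\geq 0$ the sequence of real-valued processes $\{R^n_j\}_{n\geq 1}$ is tight in $\mC[0,\infty)$. For $j=0$ this is immediate since $R^n_0\equiv 0$, so I would fix $j\geq 1$ and $T>0$ and work on $[0,T]$. Each $R^n_j$ is a continuous, centered Gaussian process: Gaussianity holds because for every $t$ the value $R^n_j(t)$ is an $\E{\,\cdot\,}$-limit of finite linear combinations of the jointly Gaussian family $\{\xi^n_l\}_{l\geq 1}$, while continuity was already obtained when proving that the defining series converges almost surely in $\mC[0,\infty)$.

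The core of the argument is a uniform-in-$n$ bound on the second moment of the increments, of the form $\E{(R^n_j(t)-R^n_j(s))^2}\leq C_{j,T}\,|t-s|^{1-\eps}$ for $0\leq s\leq t\leq T$ and $n\geq T$. To obtain it, I would insert the definition of $R^n_j$, write $R^n_j(t)=\sum_k (e_k^y,h_j)_{L_2}\,\I_{\{t\geq\tau_k^y\}}\zeta^n_k(t-\tau_k^y)$ with $\zeta^n_k(\cdot)=\sum_l (h_l,e_k^y)_{L_2}\,\xi^n_l(\cdot)$, and split the increment according to the position of each coalescence time relative to $[s,t]$, the $k$-th contribution being
\[
\I_{\{t\geq \tau_k^y\}}\zeta^n_k(t-\tau_k^y)-\I_{\{s\geq \tau_k^y\}}\zeta^n_k(s-\tau_k^y).
\]
Here one separates the terms with $\tau_k^y\leq s$ (genuine increments of $\zeta^n_k$ over an interval of length $t-s$) from those with $s<\tau_k^y\leq t$ (newly activated, with argument $t-\tau_k^y\in[0,t-s)$). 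For a single term I would use the Ornstein--Uhlenbeck estimate \eqref{ornst_uhl_2} in the $n$-free form $\E{(\xi^n_l(b)-\xi^n_l(a))^2}\leq |b-a|$, together with the orthonormality relations $\sum_l (h_l,e_k^y)_{L_2}^2=\|e_k^y\|_{L_2}^2=1$ and $\sum_k (e_k^y,h_j)_{L_2}^2\leq\|h_j\|_{L_2}^2=1$, valid since $\{e_k^y\}$ and $\{h_l\}$ are orthonormal bases.

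I expect the main obstacle to be the infinitely many coalescence times accumulating at $0$ and, relatedly, the cross-covariances between terms attached to distinct $\tau_k^y$: these do \emph{not} vanish, because the Ornstein--Uhlenbeck covariances depend on the index $l$ through $\alpha^n_l$ (had the $\alpha^n_l$ been independent of $l$, the $\zeta^n_k$ would be uncorrelated and the diagonal terms would already give a clean $|t-s|$ bound). This is precisely where the assumption of Proposition~\ref{prop:convergence_along_xi_n} enters: summing the single-term and cross-term contributions and controlling the accumulation near time $0$ costs a factor that is summable exactly because $\sum_{k}(\tau_k^y)^{1-\eps}<\infty$, and this is what produces the uniform estimate $C_{j,T}\,|t-s|^{1-\eps}$. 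Once this is in hand, Gaussianity upgrades it to arbitrary even moments, $\E{(R^n_j(t)-R^n_j(s))^{2m}}\leq c_m\,C_{j,T}^{\,m}\,|t-s|^{m(1-\eps)}$; choosing $m$ large enough that $m(1-\eps)>1$ (possible since $\eps<1$) and noting $R^n_j(0)=0$, the Kolmogorov--Chentsov criterion (\cite[Corollary~16.9]{Kallenberg}), applied exactly as in Lemma~\ref{lem_convergence_of_ou_processes}, gives tightness of $\{R^n_j\}_{n\geq 1}$ in $\mC[0,T]$. Letting $T\to\infty$ yields tightness in $\mC[0,\infty)$, and \cite[Proposition~3.2.4]{Ethier:1986} assembles the coordinates into tightness of $\{R^n\}_{n\geq 1}$ in $\mCz$.
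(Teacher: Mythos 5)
Your skeleton (coordinatewise reduction via \cite[Proposition~3.2.4]{Ethier:1986}, a uniform second-moment bound on increments, the Gaussian upgrade to higher moments, Kolmogorov--Chentsov) matches the paper's, but the core estimate is not actually established, and the route you sketch cannot deliver it. In your decomposition of the increment according to the position of $\tau_k^y$ relative to $[s,t]$, every index $k$ with $\tau_k^y\leq s$ --- and these are all but finitely many $k$, since $\tau_k^y\downarrow 0$ --- contributes a genuine increment $\zeta^n_k(t-\tau_k^y)-\zeta^n_k(s-\tau_k^y)$ whose second moment is of order $t-s$ \emph{regardless of how small $\tau_k^y$ is}; the hypothesis $\sum_k(\tau_k^y)^{1-\eps}<\infty$ is therefore of no help for these infinitely many terms. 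Nor can the cross-covariances be absorbed by Cauchy--Schwarz: that route produces a bound of the form $(t-s)\bigl(\sum_k|(e_k^y,h_j)_{L_2}|\bigr)^2$, and square-summability of the coefficients $(e_k^y,h_j)_{L_2}$ does not make $\sum_k|(e_k^y,h_j)_{L_2}|$ finite. So the claimed bound $C_{j,T}|t-s|^{1-\eps}$ is asserted, not proved. (A minor symptom of the same confusion: with the assumption $\sum_k(\tau_k^y)^{1-\eps}<\infty$, the interpolation $(t-s)\wedge\tau_k^y\leq(t-s)^{\eps}(\tau_k^y)^{1-\eps}$ yields the exponent $\eps$, not $1-\eps$.)

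The missing idea is the paper's decomposition $R^n_j=R^{n,1}_j+R^{n,2}_j$, which resolves both problems at once. First, $R^{n,1}_j(t):=\sum_{k,l}(e_k^y,h_j)_{L_2}(h_l,e_k^y)_{L_2}\,\xi^n_l(t)$ collapses, by expanding in both orthonormal bases, to the single process $\xi^n_j(t)$; this term carries exactly the ``large'' unshifted parts that you could not sum, and its tightness is precisely Lemma~\ref{lem_convergence_of_ou_processes}. Second, the remainder $R^{n,2}_j$ has summands $\I_{\left\{ t\geq\tau_k^y \right\}}\xi^n_l(t-\tau_k^y)-\xi^n_l(t)$ which are individually small, with second moment at most $t\wedge\tau_k^y$ by~\eqref{ornst_uhl_2}; and the cross terms over $k$ are killed not by any independence or covariance computation but by realizing $R^{n,2}_j(t)=(V^n_t,h_j)_{L_2}$ as a coordinate of the $L_2$-valued vector $V^n_t$ and bounding $\E{(R^{n,2}_j(t)-R^{n,2}_j(s))^2}\leq\E{\|V^n_t-V^n_s\|_{L_2}^2}$: Parseval with respect to the orthonormal family $\{e_k^y,\ k\geq 1\}$ diagonalizes this norm over $k$, and only at this point does the assumption enter, giving $\E{\|V^n_t-V^n_s\|_{L_2}^2}\leq 4(t-s)^{\eps}\sum_k(\tau_k^y)^{1-\eps}$. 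Without some such mechanism --- resumming the unshifted parts into $\xi^n_j$ and using the Hilbert-space structure to eliminate the cross terms --- your term-by-term estimate does not close.
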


\begin{proof}
  Again, according to \cite[Proposition~3.2.4]{Ethier:1986}, it is enough to check that the sequence $\{R^n_j\}_{n\geq 1}$ is tight in $\mC[0,\infty)$ for every $j\geq 0$. For $j=0$, $R^n_0=0$ so the result is obvious. 
  So, let $j\geq 1$ be fixed. We set
\[
R^{n,1}_j(t):= \sum_{ k=1 }^{ \infty } \sum_{ l=1 }^{ \infty } ( e_k^y,h_j )_{L_2}( h_l,e_k^y )_{L_2}\xi_l^n(t), \quad t\geq 0,
\]
and
\[
R^{n,2}_j(t):= \sum_{ k=1 }^{ \infty } \sum_{ l=1 }^{ \infty } ( e_k^y,h_j )_{L_2}( h_l,e_k^y )_{L_2}\left(\I_{\left\{ t\geq \tau_k^y \right\}}\xi_l^n(t -\tau_k^y)-\xi_l^n(t)\right), \quad t\geq 0.
\]
Then $R^n_j=R^{n,1}_j+R^{n,2}_j$. We will prove the tightness separately for $\{ R^{n,1}_j \}_{n\geq 1}$ and $\{ R^{n,2}_j \}_{n\geq 1}$.

\textbf{Tightness of $\{R^{n,1}_j\}_{n\geq 1}$.}
Using the fact that $\{ e_k^y,\ k\geq 1\}$ and $\{ h_l ,\ l\geq 1\}$ are bases of $\ltz$, a simple computation shows that almost surely
\[
  \Gamma_j(\widehat{\xi}):= \sum_{ k=1 }^{ \infty } \sum_{ l=1 }^{ \infty } ( e_k^y,h_j )_{L_2}( h_l,e_k^y )_{L_2}\widehat{\xi}_l=\widehat{\xi}_j.
\]
Due to the absolute continuity of the law of $\xi^n$ with respect to the law of $\widehat{\xi}$ and the equality $\Gamma_j(\xi^n)=R^{n,1}_j$, we get that $R^{n,1}_j=\xi_j^n$. Hence it follows from Lemma~\ref{lem_convergence_of_ou_processes} that $R^{n,1}_j$ converges in distribution to $0$ in $\mC[0,\infty)$. In particular, $\{ R^{n,1}_j \}_{n\geq 1}$ is tight in $\mC[0,\infty)$, according to Prohorov's theorem.

\textbf{Tightness of $\{R^{n,2}_j\}_{n\geq 1}$.} 

{\it Step I.}
For any $t\in [0,n]$ the vector 
\begin{align*}
V^n_t:=\sum_{k=1}^\infty    \sum_{l=1}^\infty e_k^y(e_k^y, h_l)_{L_2}  \left( \I_{\left\{ t \geq \tau_k^y \right\}}\xi^n_l(t - \tau_k^y) - \xi^n_l(t) \right)
\end{align*}
belongs almost surely to $\ltz$ and $\E{\| V^n_t \|_{L_2}^2} \leq \sum_{k=1}^\infty (t \wedge \tau_k^y) < \infty$. 

Indeed, by Parseval's equality (with respect to the  orthonormal family $\{e_k^y,\ k\geq 1\}$) and by the independence of $\{\xi^n_l\}_{l\geq 1}$, 
\begin{equation}
\label{parseval1}
\E{ \left\| V^n_t \right\|_{L_2}^2 }= \sum_{k=1}^\infty  \sum_{l=1}^\infty  (e_k^y, h_l)_{L_2}^2 E^n_{k,l}(t),
\end{equation}
where $E^n_{k,l}(t):=\E{\left( \I_{\left\{ t \geq \tau_k^y \right\}}\xi^n_l(t - \tau_k^y) - \xi^n_l(t) \right)^2}$.
Since $\xi^n_l(0)=0$, we have
\begin{equation}
\label{decomposition_En}
  E^n_{k,l}(t)=  \I_{\left\{ t \geq \tau_k^y \right\}} \E{\left( \xi^n_l(t - \tau_k^y) - \xi^n_l(t) \right)^2} +\I_{\left\{ t < \tau_k^y \right\}}\E{\left( \xi^n_l(0)-  \xi^n_l(t) \right)^2}.
\end{equation}
By inequality~\eqref{ornst_uhl_2}, we can deduce that 
\begin{equation}
\label{En_first_ineq}
E^n_{k,l}(t) \leq \I_{\left\{ t \geq \tau_k^y \right\}} \tau_k^y
+\I_{\left\{ t < \tau_k^y \right\}} t = t \wedge \tau_k^y.
\end{equation}
Therefore, 
\begin{equation} 
  \label{equ_estimate_for_v}
    \E{ \left\| V^n_t \right\|_{L_2} ^2}
    \leq  \sum_{k=1}^\infty  \sum_{l=1}^\infty  (e_k^y, h_l)_{L_2}^2 (t \wedge \tau_k^y)
    =  \sum_{k=1}^\infty (t \wedge \tau_k^y),
\end{equation}
by Parseval's identity (with respect to the  orthonormal family $\{h_l,\ l\geq 1\}$).
Moreover, $\sum_{k=1}^\infty (t \wedge \tau_k^y)
\leq t^\eps \sum_{k=1}^\infty ( \tau_k^y)^{1-\eps} < \infty$. 
 Therefore,  for any $t \in [0,n]$, $V^n_t$ belongs to $\ltz$ almost surely. 
 In particular, for every $t\in [0,n]$  the inner product $(V^n_t,h_j)_{L_2}$ is well-defined, and almost surely $R^{n,2}_j(t)=( V^n_t,h_j )_{L_2}$. 

 {\it Step II.}
 Let $T>0$. There exists $C_{y, \eps}$ depending on $y$ and $\eps$ such that for all $0\leq s \leq t\leq T$ and  $n \geq T$,  
\begin{align*}
  \E{\left(R^{n,2}_j(t) -R^{n,2}_j(s)\right)^2 } \leq C_{y, \eps} (t-s)^{\eps}.
\end{align*}

Indeed, proceeding as in Step~I, we get
\begin{multline*}
  \E{\left(R^{n,2}_j(t) -R^{n,2}_j(s)\right)^2 } \leq \E{ \left\| V^n_t -V^n_s \right\|_{L_2} ^2}  \\ 
  \begin{aligned}
&\leq \sum_{k=1}^\infty  \sum_{l=1}^\infty  (e_k^y, h_l)_{L_2}^2 \mathbb E \Big[\Big( \I_{\left\{ t \geq \tau_k^y \right\}}\xi^n_l(t - \tau_k^y) - \xi^n_l(t) \\
&\quad \quad \quad\quad\quad\quad\quad\quad- \I_{\left\{ s \geq \tau_k^y \right\}}\xi^n_l(s - \tau_k^y) + \xi^n_l(s) \Big)^2\Big] \\
&\leq \sum_{k=1}^\infty  \sum_{l=1}^\infty  (e_k^y, h_l)_{L_2}^2 4 \left( (t-s) \wedge \tau_k^y\right) 
= 4 \sum_{k=1}^\infty \left( (t-s) \wedge \tau_k^y\right) \\
&\leq 4 (t-s)^{\eps} \sum_{k=1}^\infty ( \tau_k^y )^{1-\eps},
\end{aligned}
\end{multline*}
where we use as previously  inequality~\eqref{ornst_uhl_2}. 
The series $\sum_{k=1}^\infty ( \tau_k^y )^{1-\eps}$ converges by assumption on $y$, so the proof of Step~II is achieved. 

{\it Step III.}
There exists $\alpha>0$, $\beta >0$ and $C_{y, \eps}$ depending on $y$ and $\eps$ such that for all $0\leq s \leq t\leq T$ and  $n \geq T$,  
\begin{align*}
  \E{\left|R^{n,2}_j(t) -R^{n,2}_j(s)\right|^\alpha } \leq C_{y,\eps} (t-s)^{1+\beta}.
\end{align*}

Indeed, for  any $s \leq t$ from $[0,T]$, $R^{n,2}_j(t)-R^{n,2}_j(s)$ is a random variable with normal distribution $\mathcal N (0, \sigma^2)$. By Step~II, $\sigma ^2 \leq C_{y, \eps} (t-s)^{\eps}$. Therefore, for any $p \geq 1$, 
\begin{align*}
\E{\left|R^{n,2}_j(t) -R^{n,2}_j(s)\right|^{2p} } \leq (2p-1) !! \  (\sigma^2)^p \leq C_{p,y, \eps} (t-s)^{\eps p}.
\end{align*}
The statement of Step~III follows by choosing $p$ larger than $\frac{1}{\eps}$. 

{\it Step IV.}
By Kolmogorov-Chentsov tightness criterion (see e.g.~\cite[Corollary 16.9]{Kallenberg:2002}), it follows from Step~III and the equality $R^{n,2}_j(0)=0$, $n \geq 1$, that the sequence of processes $\{R^{n,2}_j\}_{n\geq 1}$ restricted to $[0,T]$ is tight in $\mC[0,T]$ for every $T>0$. Hence, $\{ R^{n,2}_j \}_{n\geq 1}$ is tight in $\mC[0,\infty)$. 

\textbf{Conclusion of the proof.}
As the sum of two tight sequences, the sequence $\{R^n_j\}_{n\geq 1}$ is tight in $\mC[0, \infty)$ for any $j\geq 1$. Since $\mCo$ is equipped with the product topology, it follows from \cite[Proposition~3.2.4]{Ethier:1986} that the sequence $\{R^n\}_{n\geq 1}$ is tight in $\mCo$.
\qed
\end{proof}

\begin{lemma}
\label{lem_L2}
For every $j \geq 1$ and $t\geq 0$, $\E{(R^n_j(t))^2} \to 0$ as $n \to \infty$. 
\end{lemma}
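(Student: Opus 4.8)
The plan is to reuse the decomposition $R^n_j=R^{n,1}_j+R^{n,2}_j$ introduced in the proof of Lemma~\ref{lem_tight1}, and to control the two pieces separately via $\E{(R^n_j(t))^2}\le 2\E{(R^{n,1}_j(t))^2}+2\E{(R^{n,2}_j(t))^2}$. The first piece is immediate: since $R^{n,1}_j=\xi^n_j$, inequality~\eqref{ornst_uhl_2} with $s=0$ gives $\E{(R^{n,1}_j(t))^2}=\E{(\xi^n_j(t))^2}\le t\wedge\frac{1}{\alpha^n_j}$, which tends to $0$ by Assumption (O2).

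For the second piece, I would not work with $R^{n,2}_j$ directly but rather use the representation $R^{n,2}_j(t)=(V^n_t,h_j)_{L_2}$ established in Step~I of the proof of Lemma~\ref{lem_tight1}. By Cauchy--Schwarz this yields $\E{(R^{n,2}_j(t))^2}\le\E{\|V^n_t\|_{L_2}^2}$, so it suffices to prove that $\E{\|V^n_t\|_{L_2}^2}\to 0$. The Parseval computation~\eqref{parseval1} expresses this quantity as the series $\sum_{k,l}(e_k^y,h_l)_{L_2}^2\,E^n_{k,l}(t)$, and I would pass to the limit term-by-term. First, using the splitting~\eqref{decomposition_En} together with~\eqref{ornst_uhl_2}, one checks that for every fixed $k,l$ and for $n\ge t$ (so that all time arguments lie in $[0,n]$) one has $E^n_{k,l}(t)\le\frac{1}{\alpha^n_l}\to 0$ as $n\to\infty$ by (O2); hence each summand tends to $0$. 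Second, the bound~\eqref{En_first_ineq} provides the $n$-independent domination $(e_k^y,h_l)_{L_2}^2\,E^n_{k,l}(t)\le(e_k^y,h_l)_{L_2}^2\,(t\wedge\tau_k^y)$, whose total sum over $(k,l)$ equals $\sum_k(t\wedge\tau_k^y)$ and is finite under the assumption on $y$, exactly as in~\eqref{equ_estimate_for_v}. Dominated convergence for series then gives $\E{\|V^n_t\|_{L_2}^2}\to 0$, completing the argument.

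The only genuine obstacle is summability, and it is precisely what dictates the choice of working with $V^n_t$. Any naive estimate that discards the cancellation between $\I_{\{t\ge\tau_k^y\}}\xi^n_l(t-\tau_k^y)$ and $\xi^n_l(t)$ replaces the relevant variance $t\wedge\tau_k^y$ by $t-\tau_k^y$, and since $\tau_k^y\downarrow 0$ the series $\sum_k(t-\tau_k^y)$ diverges, so no dominating function is available. Exploiting the increment structure of $V^n_t$ restores the summable bound $t\wedge\tau_k^y$, which combined with the hypothesis $\sum_k(\tau_k^y)^{1-\eps}<\infty$ legitimizes interchanging limit and sum; once that is in place, the pointwise $L_2$-decay of the Ornstein--Uhlenbeck processes coming from (O2) does the rest.
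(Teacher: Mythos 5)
Your proposal is correct and follows essentially the same route as the paper's own proof: the same decomposition $R^n_j=R^{n,1}_j+R^{n,2}_j$, the identification $R^{n,1}_j=\xi^n_j$ handled via~\eqref{ornst_uhl_2}, the representation $R^{n,2}_j(t)=(V^n_t,h_j)_{L_2}$ bounded by $\E{\|V^n_t\|_{L_2}^2}$, the pointwise bound $E^n_{k,l}(t)\leq \frac{1}{\alpha_l^n}\to 0$ from (O2), and dominated convergence justified by~\eqref{En_first_ineq} and~\eqref{equ_estimate_for_v}. Your closing remark correctly identifies why the cancellation encoded in $V^n_t$ (yielding the summable bound $t\wedge\tau_k^y$) is the essential point.
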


\begin{proof}
Let $j \geq 1$ and $t\geq 0$ be fixed. We recall that $R^n_j=R^{n,1}_j + R^{n,2}_j$. Remark that $R^{n,1}_j= \xi^n_j$ almost surely. Thus, $\E{ \left(R^{n,1}_j(t)\right)^2 } \to 0$ follows immediately from inequality~\eqref{ornst_uhl_2}.

Due to the equality $R^{n,2}_j(t) = (V^n_t,h_j)_{L_2}$, we can estimate for $n\geq t$
\[
  \E{\left(R^{n,2}_j(t)\right)^2} \leq \E{\left\| V^n_t \right\|_{L_2}^2}=\sum_{k=1}^\infty  \sum_{l=1}^\infty  (e_k^y, h_l)_{L_2}^2 E^n_{k,l}(t).
\] 
By~\eqref{decomposition_En} and~\eqref{ornst_uhl_2}, we have for every $k,l\geq 1$
\[
  0\leq E^n_{k,l}(t) \leq \frac{1}{\alpha_l^n} \to 0, \quad n\to\infty.
\]
Therefore, inequalities~\eqref{En_first_ineq} and~\eqref{equ_estimate_for_v} and the dominated convergence theorem imply that $\E{\left\| V^n_t \right\|_{L_2}^2} \to 0$. This concludes the proof. 
\qed
\end{proof}

\begin{proof}[Proof of Proposition~\ref{prop:convergence_along_xi_n}]
  Lemma~\ref{lem_tight1} and Prohorov's theorem yield that the sequence $\{R^n\}_{n\geq 1}$  is relatively compact in $\mCz$. 
Moreover, by Lemma~\ref{lem_L2}, we deduce that each weakly convergent subsequence of $\{R^n\}_{n\geq 1}$ converges in distribution to $0$. It implies convergence~\eqref{conv_Rn}, which achives the proof of the proposition. 
 \qed
\end{proof}

\begin{proof}[Proof of Theorem~\ref{theo:infinite_dim}]
By lemmas~\ref{lem:mmaf_belongs_to_coal} and~\ref{lem:stopping_time_mmaf}, 
$\Y$ belongs almost surely to $\coal$ and the series $\sum_{k=1}^\infty (\tau_k^\Y)^{1-\eps}$ converges almost surely for each $\eps \in (0,\frac{1}{2})$.
Therefore, Proposition~\ref{prop:convergence_along_xi_n} and the independence of $\Y$ and  $\{\xi^n\}_{n \geq 1}$ imply that 
$\Psi(\Y,\xi^n)$, $n \geq 1$, converges in distribution to $(\Y,\widehat{\Y})$ in $\bE$. By Proposition~\ref{prop:existence_regular}, the same sequence converges in distribution to the conditional law $\law_{\{\xi^n\}}(\X | \T(\X)=0)$. Thus $\law_{\{\xi^n\}}(\X | \T(\X)=0)=\law(\Y,\widehat{\Y})$.
\qed
\end{proof}

\section{Coupling of MMAF and cylindrical Wiener process}
\label{sec:coupling}

We have already seen, in Proposition~\ref{prop_mmaf_and_cylindrical_brownian_motion} and its proof, that for every MMAF $\Y$ starting at $g$ there exists a cylindrical Wiener process $\W$  in $L_2$ starting at $g$ such that equation~\eqref{intro:equ_couple} holds. 
However, it is unknown whether equation~\eqref{intro:equ_couple} has a strong solution. 

In Proposition~\ref{prop_mmaf_and_cylindrical_brownian_motion}, we considered a process $\W$ defined by~\eqref{equ_cylindrical_wiener_process_theta} and we proved that the pair $(\Y, \W)$ satisfies~\eqref{intro:equ_couple}. The reverse statement holds true, in the following sense.

\begin{proposition}
\label{pro_uniqueness_of_coupling}
Let $\Y_t$, $t\geq 0$, be a MMAF  and $\W_t$, $t\geq 0$ be a cylindrical Wiener process in $L_2$ both starting at $g$ and such that $(\Y,\W)$ satisfies~\eqref{intro:equ_couple}. Then there exists a cylindrical Wiener process $B_t$, $t\geq 0$, in $L_2$ starting at $0$ independent of $(\Y,\W)$ such that for every $h \in L_2$ almost surely
\begin{align}
\label{equ_uniqueness_coupling}
\W_t(h)=(\Y_t,h)_{L_2}+\int_0^t\pr_{\Y_s}^\bot h \cdot \mathrm d B_s,\quad t\geq 0.
\end{align}
\end{proposition}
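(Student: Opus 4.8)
The plan is to reverse the construction of Proposition~\ref{prop_mmaf_and_cylindrical_brownian_motion}: I extract from the given $\W$ the ``orthogonal noise'' it contains and complete it, with the help of an auxiliary independent process, into a full cylindrical Wiener process. First I would rewrite~\eqref{intro:equ_couple} in weak form as $(\Y_t,h)_{L_2}=(g,h)_{L_2}+\int_0^t\pr_{\Y_s}h\cdot\mathrm d\W_s$ and combine it with the identities $\int_0^t h\cdot\mathrm d\W_s=\W_t(h)-(g,h)_{L_2}$ and $\id=\pr_{\Y_s}+\pr_{\Y_s}^{\bot}$ to obtain the decomposition
\[
\W_t(h)=(\Y_t,h)_{L_2}+M_t(h),\qquad M_t(h):=\int_0^t\pr_{\Y_s}^{\bot}h\cdot\mathrm d\W_s .
\]
The process $M(h)$ is a continuous martingale with $\langle M(h_1),M(h_2)\rangle_t=\int_0^t(\pr_{\Y_s}^{\bot}h_1,\pr_{\Y_s}^{\bot}h_2)_{L_2}\,\mathrm ds$, which is degenerate on $L_2(\Y_s)$, so $M$ is not itself a cylindrical Wiener process. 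On a (possibly enlarged) probability space I would introduce a cylindrical Wiener process $B'$ in $L_2$ starting at $0$, independent of $(\Y,\W)$, and set
\[
B_t(h):=\int_0^t\pr_{\Y_s}^{\bot}h\cdot\mathrm d\W_s+\int_0^t\pr_{\Y_s}h\cdot\mathrm dB'_s,\qquad t\geq 0,\ h\in L_2 .
\]

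Second, I would check that $B$ is a cylindrical Wiener process starting at $0$. Linearity in $h$ is immediate, and by L\'evy's characterization it suffices to compute the joint quadratic variation of the $(\F_t^{\W}\vee\F_t^{B'})$-martingales $B(h_1),B(h_2)$. Since $\W$ and $B'$ are independent the mixed bracket vanishes, and using that $\pr_{\Y_s}$ and $\pr_{\Y_s}^{\bot}$ are orthogonal self-adjoint projections with $\pr_{\Y_s}+\pr_{\Y_s}^{\bot}=\id$ one gets $\langle B(h_1),B(h_2)\rangle_t=\int_0^t[(\pr_{\Y_s}^{\bot}h_1,\pr_{\Y_s}^{\bot}h_2)_{L_2}+(\pr_{\Y_s}h_1,\pr_{\Y_s}h_2)_{L_2}]\,\mathrm ds=t\,(h_1,h_2)_{L_2}$. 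To recover~\eqref{equ_uniqueness_coupling} I would invoke the rule for integrating a progressive process $\kappa$ against the stochastic integral $B$ (valid because the projections are self-adjoint), namely $\int_0^t\kappa_s\cdot\mathrm dB_s=\int_0^t\pr_{\Y_s}^{\bot}\kappa_s\cdot\mathrm d\W_s+\int_0^t\pr_{\Y_s}\kappa_s\cdot\mathrm dB'_s$. Applying this to $\kappa_s=\pr_{\Y_s}^{\bot}h$, for which $\pr_{\Y_s}^{\bot}\kappa_s=\kappa_s$ and $\pr_{\Y_s}\kappa_s=0$, yields $\int_0^t\pr_{\Y_s}^{\bot}h\cdot\mathrm dB_s=M_t(h)$; combined with the decomposition above this is exactly~\eqref{equ_uniqueness_coupling}.

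Third, and this is the heart of the matter, I would establish the independence of $B$ and $\Y$. (Note that since $\int_0^t\pr_{\Y_s}^{\bot}h\cdot\mathrm dB_s=M_t(h)$ is a functional of $(\Y,\W)$, the process $B$ cannot be independent of the full pair $(\Y,\W)$; independence from $\Y$ is the content, and it is what the uniqueness argument behind Theorem~\ref{theo:coupling} actually requires.) The obstacle is that $\Y$ is itself a functional of $\W$, so independence cannot be obtained by naively enlarging the filtration by $\sigma(\Y)$, which would reveal the future of $\W$. Instead I would condition on $\Y$ and mirror the computation of Lemma~\ref{lem:shifted_W_is_BM}, decomposing $B$ along the basis $\{e_k^{\Y}\}$ of Lemma~\ref{lem:onb}. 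For $s\geq\tau_k$ one has $\pr_{\Y_s}^{\bot}e_k=e_k$, so the $e_k$-component of $B$ after $\tau_k$ coincides with the increment $\W_{t+\tau_k}(e_k)-\W_{\tau_k}(e_k)$, while the components of $B$ inside $L_2(\Y_s)$ are supplied by the fresh noise $B'$. Since $\tau_k$ is an $(\F_t^{\W})$-stopping time and $e_k$ is $\F_{\tau_k}^{\W}$-measurable, the strong Markov property of $\W$ at $\tau_k$ makes these post-$\tau_k$ increments, conditionally on $\Y$, independent standard Brownian motions with a law not depending on $\Y$; together with the independence of $B'$ this identifies the conditional law of $B$ given $\Y$ as that of a cylindrical Wiener process. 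Concretely I would factorize $\E{F_0(\Y)\prod_{k}F_k(B(e_k^{\Y}))}$ exactly as in the proofs of Lemma~\ref{lem:shifted_W_is_BM} and Proposition~\ref{prop:xi_is_cylindricalWp}.

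The main difficulty lies entirely in this last step: re-deriving, for an arbitrary $\W$ satisfying~\eqref{intro:equ_couple} rather than for the specific process built in Proposition~\ref{prop_mmaf_and_cylindrical_brownian_motion}, that the post-coalescence increments of $\W$ in the directions $e_k$ are Brownian and independent of $\Y$. This rests on the strong Markov property at the coalescence times $\tau_k$ and on assembling the independence over the countably many intervals $(\tau_{k+1},\tau_k]$, using that the whole trajectory of $\Y$ is recovered from these pieces together with $\F^{\Y}_{\tau_k}$-measurability of $e_k$ and $\tau_k$.
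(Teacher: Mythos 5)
Your construction of $B$ is, up to repackaging, exactly the paper's: the paper defines it coordinatewise as $B_k(t):=\beta_k(t\wedge\tau_k)+\int_0^t\I_{\{s\geq\tau_k\}}e_k\cdot\mathrm d\W_s$ with fresh Brownian motions $\beta_k$ independent of $(\Y,\W)$, and $B_t(h):=\sum_{k\geq 0}(h,e_k)_{L_2}B_k(t)$, which is your $\int_0^t\pr_{\Y_s}^{\bot}h\cdot\mathrm d\W_s+\int_0^t\pr_{\Y_s}h\cdot\mathrm dB'_s$ expanded in the basis of Lemma~\ref{lem:onb}; your first two steps are sound. Your side remark is also correct and matches the paper's actual proof: independence of $B$ from the full pair $(\Y,\W)$ is impossible (the orthogonal part of $B$ is a functional of $(\Y,\W)$ by~\eqref{equ_uniqueness_coupling}), and the paper, despite the wording of the statement, only proves independence from $\Y$ (Lemma~\ref{lem_independence_of_bk_and_gk} and the concluding argument), which is all that Theorem~\ref{theo:coupling} uses.

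The genuine gap is in your third step. First, a small point: since strong uniqueness for~\eqref{intro:equ_couple} is unknown, $\Y$ is not known to be $\F^{\W}$-adapted, so $\tau_k$ is not known to be an $(\F^{\W}_t)$-stopping time; the Markov property must be applied with respect to $\F^{\X}_t$, $\X=(\Y,\W)$. More seriously, that strong Markov property only yields that $\W^k:=\W_{\cdot+\tau_k}-\W_{\tau_k}$ is independent of the \emph{past} $\F^{\X}_{\tau_k}$, whereas ``conditionally on $\Y$'' involves the \emph{future} $\Y_{\tau_k+\cdot}$, which is built from the future increments of $\W$ through~\eqref{intro:equ_couple}; nothing in the Markov property excludes correlation between $\sigma(\Y_{\tau_k+\cdot})$ and $\W^k(e_k)$. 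This is exactly why you cannot ``mirror'' Lemma~\ref{lem:shifted_W_is_BM}: there, $\W$ was given by~\eqref{equ_cylindrical_wiener_process_theta} in terms of $\Y$ and an \emph{exogenous} noise $B$ independent of $\Y$, so conditioning on $\Y$ froze $\tau_k,e_k$ and left that noise untouched; here no exogenous noise is available --- producing it is the point of the proposition. The missing idea is the paper's Lemma~\ref{lem_measurability_of_y}: since $\Y_{\tau_k}\in\St$ a.s., after $\tau_k$ the flow evolves in the finite-dimensional space $L_2(\Y_{\tau_k})$, where the equation $Z_t=\Y_{\tau_k}+\int_0^t\pr_{Z_s}\mathrm d\zeta^k_s$, with $\zeta^k=\int_0^{\cdot}\pr_{\Y_{\tau_k}}\mathrm d\W^k_s$, has a pathwise unique solution (Corollary~\ref{cor_existence_of_solution_to_the_equation}, via the deterministic solution map of Lemma~\ref{lem_solution_to_deterministic_equation}); hence $\Y$ is measurable with respect to $\G_k=\sigma\bigl(\X_{\cdot\wedge\tau_k},\zeta^k\bigr)$. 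Only this converts the Markov property into what you need: Lemmas~\ref{lem_independence_for_wk} and~\ref{lem_properties_of_wk} then show that $\W^k(e_l)$, $l\geq k$, are Brownian motions independent of $\G_k\supseteq\sigma(\Y)$, and the joint independence of $\Y$ and \emph{all} the $\W^k(e_k)$ (your ``assembly over the intervals'', which you leave unspecified) is obtained via the gluing identity~\eqref{equ_gl_for_wk} together with the $\G_k$-measurability of $\W^l_{\cdot\wedge\tau_{k,l}}(e_l)$, $l>k$ (Lemma~\ref{lem_independence_of_y_and_wk}). Without this strong-uniqueness/measurability step, the identification of the conditional law of $B$ given $\Y$ does not follow, so the heart of the proof is still missing.
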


Proposition~\ref{pro_uniqueness_of_coupling} directly implies the statement of Theorem~\ref{theo:coupling}. 
Before we prove Proposition~\ref{pro_uniqueness_of_coupling}, we will show several auxiliary statements.

Recall that we denote $e_{k}:=e_k^\Y$ and $\tau_{k}:=\tau_k^\Y$, and that
for every $k\geq 1$, the random element $e_{k}$ is $\F^{\Y}_{\tau_k}$-measurable. 
Let $(\F_t^{\X})_{t\geq 0}$ be the complete right-continuous filtration generated by $\X:=(\Y,\W)$. 

For every $k\geq 1$ we remark that $\W^k_t:=\W_{t+\tau_k}-\W_{\tau_k}$, $t\geq 0$, is a cylindrical Wiener process starting at $0$ independent of $\F^{\X}_{\tau_k}$. Moreover, if $l \geq k$, then $\tau_l \leq \tau_k$ almost surely and the random element $e_l$ is $\F^{\X}_{\tau_l}$-measurable, hence  also $\F^\X_{\tau_k}$-measurable. Therefore,  the process 
\begin{equation}
\label{def:Wkel}
  \W^k_t(e_l):=\int_{ 0 }^{ t }  e_l \cdot \mathrm d\W^k_s =\int_{ \tau_k }^{ t+\tau_k }  e_l \cdot \mathrm d\W_s  , \quad t\geq 0,
\end{equation}
is well-defined. 

\begin{lemma} 
  \label{lem_independence_of_y_and_wk}
  The processes $\Y$, $\W^k(e_k)$, $k\geq 1$, are independent.
\end{lemma}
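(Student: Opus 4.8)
The plan is to prove joint independence by reducing to finitely many indices and then conditioning on the coalescence times $\tau_k$, exploiting the crucial fact that after time $\tau_k$ the system has only finitely many particles, so that the coupling equation~\eqref{intro:equ_couple} becomes a finite-dimensional equation where strong uniqueness is available. Two geometric facts should be recorded first, both immediate from Lemma~\ref{lem:onb}: (i) for every $s \ge \tau_k$ one has $L_2(\Y_s) \subseteq L_2(\Y_{\tau_k})$ and $e_k \perp L_2(\Y_{\tau_k})$, whence $\pr_{\Y_s} e_k = 0$; and (ii) $L_2(\Y_{\tau_k})$ is the $k$-dimensional space spanned by $\{e_0, \dots, e_{k-1}\}$, while $\{e_l,\ l \ge k\}$ spans its orthogonal complement $H_k$.

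First I would treat a single index $k$. Conditioning on $\F^{\X}_{\tau_k}$, the process $\W^k=\W_{\cdot+\tau_k}-\W_{\tau_k}$ is, by the strong Markov property recalled before the statement, a cylindrical Wiener process independent of $\F^{\X}_{\tau_k}$; since the $e_l$ are $\F^{\X}_{\tau_k}$-measurable and orthonormal, the coordinates $\{\W^k(e_l),\ l\ge 0\}$ are independent standard Brownian motions. Writing $V:=L_2(\Y_{\tau_k})$, the shifted process $\Y^k_t:=\Y_{t+\tau_k}$ solves, by~\eqref{intro:equ_couple} and fact (i),
\[
  \Y^k_t = \Y_{\tau_k} + \int_0^t \pr_{\Y^k_s}\, \mathrm d(\pr_V \W^k)_s, \quad t \ge 0,
\]
which, because $N(\Y^k_s)\le k$ for all $s$, is a finite-dimensional MMAF equation on $V$. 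By the finite-dimensional strong well-posedness (Corollary~\ref{cor_existence_of_solution_to_the_equation}; see Section~\ref{sec:finite_dimension}), $\Y^k$ is a measurable functional of $\Y_{\tau_k}$ and of the $V$-noise $\pr_V\W^k$, hence of $\F^{\X}_{\tau_k}$ and of $\{\W^k(e_l),\ 0\le l< k\}$. As $\W^k(e_k)$ is independent of these, it is, conditionally on $\F^{\X}_{\tau_k}$, a standard Brownian motion independent of $\Y^k$; since its conditional law does not depend on the conditioning, the elementary conditioning lemma gives that $\W^k(e_k)$ is a Brownian motion independent of $(\Y_{\cdot \wedge \tau_k}, \Y^k)=\Y$.

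For the joint statement it suffices to show that for any finite index set $k_1 < \dots < k_m$ the variables $\Y, \W^{k_1}(e_{k_1}), \dots, \W^{k_m}(e_{k_m})$ are independent, which I would prove by induction on $m$, peeling off the smallest index $k_1$ (equivalently, the largest time $\tau_{k_1}$). Conditioning on $\F^{\X}_{\tau_{k_1}}$: as above $\Y$ is measurable with respect to $\F^{\X}_{\tau_{k_1}}$ together with the finite-dimensional noise $\{\W^{k_1}(e_l),\ l< k_1\}$; moreover for $i\ge 2$ one has $\tau_{k_i}<\tau_{k_1}$, so $\W^{k_i}(e_{k_i})$ splits into an $\F^{\X}_{\tau_{k_1}}$-measurable part and a part built from the post-$\tau_{k_1}$ noise in direction $e_{k_i}$, where $e_{k_i}\in H_{k_1}$ since $k_i>k_1$. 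Thus everything apart from $\W^{k_1}(e_{k_1})$ is a functional of $\F^{\X}_{\tau_{k_1}}$ and of the coordinates $\{\W^{k_1}(e_l),\ l\ne k_1\}$, all independent of the single coordinate $\W^{k_1}(e_{k_1})$ conditionally on $\F^{\X}_{\tau_{k_1}}$. The same conditioning argument then yields that $\W^{k_1}(e_{k_1})$ is a Brownian motion independent of $(\Y, \W^{k_2}(e_{k_2}), \dots, \W^{k_m}(e_{k_m}))$; combined with the induction hypothesis for the index set $\{k_2,\dots,k_m\}$, this gives the claim.

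The main obstacle is the step showing that the post-$\tau_k$ evolution of $\Y$ depends only on the noise in $V=L_2(\Y_{\tau_k})$, hence is independent of $\W^k(e_k)$. Because strong uniqueness of~\eqref{intro:equ_couple} is unavailable in infinite dimension, one cannot a priori express $\Y$ as a functional of the coalescing noise; what rescues the argument is precisely that after any $\tau_k$ there are at most $k$ particles, so the restricted equation lives in the finite-dimensional space $V$ where strong uniqueness does hold. Making this reduction rigorous — in particular the measurability of $\Y^k$ as a functional of $\pr_V\W^k$, and the application of the finite-dimensional solution map with the random initial datum $\Y_{\tau_k}$ and the conditionally fixed basis — is where the care lies; the remaining conditioning and the induction are then routine.
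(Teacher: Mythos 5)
Your proposal is correct and takes essentially the same route as the paper's proof: your key step of writing the post-$\tau_k$ evolution of $\Y$ as the strong solution of the finite-dimensional equation driven by $\pr_{L_2(\Y_{\tau_k})}\W^k$ (via Corollary~\ref{cor_existence_of_solution_to_the_equation}) is precisely the paper's Lemma~\ref{lem_measurability_of_y}, your splitting of $\W^{k_i}(e_{k_i})$ into a pre-$\tau_{k_1}$ part and the coordinate $\W^{k_1}(e_{k_i})$ is the paper's gluing identity~\eqref{equ_gl_for_wk}, and your conditioning on $\F^{\X}_{\tau_{k_1}}$ plus peeling off the smallest index reproduces the paper's argument with the $\sigma$-algebras $\G_k$ and $\H_k$ and lemmas~\ref{lem_independence_for_wk} and~\ref{lem_properties_of_wk}. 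The technical point you flag (solving with a random initial datum and a conditionally fixed basis) is exactly what the paper handles in Lemma~\ref{lem_measurability_of_y}, so there is no gap.
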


In order to prove that lemma, we start by some auxiliary definitions and results. 
The process
\[
  \zeta^k_t:=\int_{ 0 }^{ t } \pr_{\Y_{\tau_k}}\mathrm d \W^k_s , \quad t\geq 0,
\]
is a well-defined continuous $L_2$-valued $(\F^\X_{t+\tau_k})$-martingale, because $\pr_{\Y_{\tau_k}}$ is $\F^\X_{\tau_k}$-measurable and $\W^k_t$, $t \geq 0$, is independent of $\F^\X_{\tau_k}$.
Let $\G_k$ be the complete $\sigma$-algebra generated by $\X(t \wedge \tau_k)=(\Y_{t\wedge \tau_k},\W_{t\wedge \tau_k})$, $t\geq 0$, and by $\zeta_t^k$, $t\geq 0$.

\begin{lemma} 
  \label{lem_measurability_of_y}
  For every $k\geq 1$ the MMAF $\Y$ is $\G_k$-measurable as a map from $\Omega$ to $\mC([0,\infty),\Li)$.
\end{lemma}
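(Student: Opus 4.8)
The plan is to reconstruct the whole path of $\Y$ as a measurable functional of the two families generating $\G_k$: the stopped process $\X(\cdot\wedge\tau_k)=(\Y_{\cdot\wedge\tau_k},\W_{\cdot\wedge\tau_k})$ and the martingale $\zeta^k$. The restriction $\Y_{\cdot\wedge\tau_k}$ is $\G_k$-measurable by the very definition of $\G_k$, so the entire difficulty lies in recovering the trajectory of $\Y$ after time $\tau_k$. As a preliminary, I would note that $\tau_k$ is itself $\G_k$-measurable, since it is the first time at which $t\mapsto N(\Y_{t\wedge\tau_k})$ reaches the value $k$, and that $\Y_{\tau_k}$ is a $\G_k$-measurable step function in $\St$ with $N(\Y_{\tau_k})=k$.

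The key step is to rewrite equation~\eqref{intro:equ_couple} after time $\tau_k$ as a closed equation driven by $\zeta^k$. By the coalescing property (G4) of $\Y$, the partition of $[0,1)$ determined by $\Y_s$ only coarsens after $\tau_k$, so $L_2(\Y_s)\subseteq L_2(\Y_{\tau_k})$ for every $s\geq\tau_k$, whence $\pr_{\Y_s}=\pr_{\Y_s}\circ\pr_{\Y_{\tau_k}}$. Shifting time by $\tau_k$ and writing $\W^k_v=\W_{v+\tau_k}-\W_{\tau_k}$, this gives for all $u\geq 0$
\begin{align*}
\Y_{\tau_k+u}
&=\Y_{\tau_k}+\int_0^u\pr_{\Y_{\tau_k+v}}\mathrm d\W^k_v \\
&=\Y_{\tau_k}+\int_0^u\pr_{\Y_{\tau_k+v}}\pr_{\Y_{\tau_k}}\mathrm d\W^k_v
=\Y_{\tau_k}+\int_0^u\pr_{\Y_{\tau_k+v}}\mathrm d\zeta^k_v,
\end{align*}
where the last equality uses $\zeta^k_v=\int_0^v\pr_{\Y_{\tau_k}}\mathrm d\W^k_r$ and the associativity of the stochastic integral. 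Thus $u\mapsto\Y_{\tau_k+u}$ solves an equation of the same type as~\eqref{intro:equ_couple}, started from the $\G_k$-measurable step function $\Y_{\tau_k}$ and driven by the $\G_k$-measurable, finite-dimensional martingale $\zeta^k$, which is a Brownian motion in the (finite-dimensional) space $L_2(\Y_{\tau_k})$.

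To conclude, I would invoke the strong well-posedness of the MMAF equation in finite dimension, established in Section~\ref{sec:finite_dimension} through the solution map $S$ of Lemma~\ref{lem_solution_to_deterministic_equation} and Corollary~\ref{cor_existence_of_solution_to_the_equation}. Since $\Y_{\tau_k}\in\St$ has exactly $k$ steps, the displayed equation is a finite-dimensional MMAF equation whose solution is, by pathwise uniqueness, a jointly measurable functional of its initial condition $\Y_{\tau_k}$ and its driving path $\zeta^k$. As both data are $\G_k$-measurable, the shifted path $u\mapsto\Y_{\tau_k+u}$ is $\G_k$-measurable; combined with the $\G_k$-measurability of $\Y_{\cdot\wedge\tau_k}$, this yields the $\G_k$-measurability of the whole path $\Y$ in $\mC([0,\infty),\Li)$.

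I expect the main obstacle to be the rigorous justification of the reduction to $\zeta^k$: the interplay of the time-shift (independence of $\W^k$ from $\F^\X_{\tau_k}$), the identity $\pr_{\Y_s}=\pr_{\Y_s}\circ\pr_{\Y_{\tau_k}}$ inside a stochastic integral, and the fact that the state space $L_2(\Y_{\tau_k})$ of the driving noise is itself random. A careful treatment would condition on $\Y_{\tau_k}=f$ over step functions $f\in\St$ with $N(f)=k$, apply the deterministic map $S$ on each such event, and verify joint measurability in $(f,\zeta^k)$.
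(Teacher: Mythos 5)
Your proposal is correct and follows essentially the same route as the paper: reduce to the post-$\tau_k$ trajectory, observe that $u\mapsto\Y_{\tau_k+u}$ is a strong solution of $Z_t=\Y_{\tau_k}+\int_0^t\pr_{Z_s}\mathrm d\zeta^k_s$, and conclude by the pathwise uniqueness of Corollary~\ref{cor_existence_of_solution_to_the_equation} that it is determined by the $\G_k$-measurable data $(\Y_{\tau_k},\zeta^k)$. In fact you supply a justification (the identity $\pr_{\Y_s}=\pr_{\Y_s}\circ\pr_{\Y_{\tau_k}}$ for $s\geq\tau_k$, via property (G4)) for the step the paper merely asserts, namely that the shifted process solves the $\zeta^k$-driven equation.
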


\begin{proof} 
  In order to show the measurability of $\Y$ with respect to $\G_k$, it is enough to show the measurability of $\Y_{\tau_k+t}$, $t\geq 0$.
  
By Corollary~\ref{cor_existence_of_solution_to_the_equation}, we know that for   every  $g \in \St$ and cylindrical Wiener process $W$,  there exists a unique continuous $\Li$-valued process $Y$ such that almost surely
  \begin{equation*} 
    Y_t=g+\int_{ 0 }^{ t } \pr_{Y_s}\mathrm dW^g_s, \quad t\geq 0,
  \end{equation*}
  where $W_t^g=\int_{ 0 }^{ t } \pr_{g}\mathrm dW_s $, $t\geq 0$.

Let us consider the equation 
\begin{align}
\label{equ_Z_t}
Z_t=\Y_{\tau_k}+\int_{ 0 }^{ t } \pr_{Z_s}\mathrm d\zeta^k_s , \quad t\geq 0, 
\end{align}
where $\zeta^k_t=\int_{ 0 }^{ t } \pr_{\Y_{\tau_k}}\mathrm d \W^k_s$. 
We note that $\Y_{\tau_k}$ belongs to $\St$ almost surely and is independent of $\W^k$. Furthermore, the process $\Y_{\tau_k+t}$, $t\geq 0$, is a strong solution to~\eqref{equ_Z_t}. Therefore, it is uniquely determined by $\zeta^k$ and $\Y_{\tau_k}$, thus it is $\G_k$-measurable. 
\qed
\end{proof}

\begin{lemma} 
\label{lem_independence_for_wk}
Let $y \in \coal$ and $k \geq 1$.  Then the processes 
\[
\W^k_t(e_l^y)=\int_{ 0 }^{ t }  e_l^y \cdot \mathrm d\W^k_s , \quad t\geq 0, \ l \geq k
\]
are independent standard Brownian motions that do not depend on 
\[
\zeta^{y,k}_t:=\int_{ 0 }^{ t } \pr_{y_{\tau_k^y}}\mathrm d\W^k_s, \quad t\geq 0. 
\]
\end{lemma}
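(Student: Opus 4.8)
The plan is to use that, for the fixed $y \in \coal$, the family $\{e_l^y,\ l\geq 0\}$ is a \emph{deterministic} orthonormal basis of $L_2$ (Lemma~\ref{lem:onb}), so that the cylindrical Wiener process $\W^k$ decomposes coordinate-wise into independent one-dimensional Brownian motions, and the projection process $\zeta^{y,k}$ involves only the finitely many coordinates $e_0^y,\dots,e_{k-1}^y$ spanning $L_2(y_{\tau_k^y})$.

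First I would record the independence of the coordinate processes. Since $\W^k$ is a cylindrical Wiener process in $L_2$ starting at $0$, each real process $\W^k_\cdot(e_l^y)$, $l\geq 0$, is a continuous martingale with respect to the natural filtration of $\W^k$, and in fact a standard Brownian motion by property ii) of Definition~\ref{def:cylind_W_p}. Applying the polarization identity to property iii) yields the joint quadratic variation $\langle \W^k(e_i^y),\W^k(e_j^y)\rangle_t = t\,(e_i^y,e_j^y)_{L_2} = t\,\delta_{ij}$, because $\{e_l^y\}$ is orthonormal. By L\'evy's characterization (\cite[Theorem~II.6.1]{ikeda_wata}) applied to each finite subfamily, the processes $\{\W^k(e_l^y),\ l\geq 0\}$ are independent standard Brownian motions; in particular the subfamily $\{\W^k(e_l^y),\ l\geq k\}$ appearing in the statement consists of independent standard Brownian motions.

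It then remains to show that this subfamily is independent of $\zeta^{y,k}$, and for this I would identify $\zeta^{y,k}$ as a functional of the complementary coordinates $\{\W^k(e_l^y),\ 0\leq l<k\}$. By part~1) of Lemma~\ref{lem:onb}, $\{e_0^y,\dots,e_{k-1}^y\}$ is an orthonormal basis of $L_2(y_{\tau_k^y})$, so $\pr_{y_{\tau_k^y}}h = \sum_{l=0}^{k-1}(h,e_l^y)_{L_2}\,e_l^y$ for every $h\in L_2$. Testing $\zeta^{y,k}$ against an arbitrary $h$ and using self-adjointness of the projection gives $(\zeta^{y,k}_t,h)_{L_2}=\W^k_t(\pr_{y_{\tau_k^y}}h)=\sum_{l=0}^{k-1}(h,e_l^y)_{L_2}\,\W^k_t(e_l^y)$, so $\zeta^{y,k}$ is measurable with respect to $\sigma(\W^k(e_l^y),\ 0\leq l<k)$.

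Combining the two previous steps, the family $\{\W^k(e_l^y),\ l\geq k\}$ generates a $\sigma$-algebra independent of $\sigma(\W^k(e_l^y),\ 0\leq l<k)$, which contains $\sigma(\zeta^{y,k})$; hence $\{\W^k(e_l^y),\ l\geq k\}$ does not depend on $\zeta^{y,k}$, as claimed. I expect the only delicate point to be the passage from the single-time covariance in iii) to the quadratic-variation identity needed for L\'evy's theorem, which is handled by polarization, together with the standard reduction of the independence of the infinite Gaussian family to its finite subfamilies.
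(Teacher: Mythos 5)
Your proposal is correct and takes essentially the same route as the paper's (very terse) proof: orthonormality of the deterministic family $\{e_l^y,\ l\geq 0\}$ makes the coordinate processes $\W^k(e_l^y)$ independent standard Brownian motions, and $\zeta^{y,k}_t=\sum_{j=0}^{k-1}e_j^y\,\W^k_t(e_j^y)$ is a functional of the complementary coordinates $l<k$, whence the independence. The polarization/L\'evy argument and the reduction to finite subfamilies that you spell out are exactly the details the paper leaves implicit.
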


\begin{proof} 
By Lemma~\ref{lem:onb}, the family $\{e^y_l, l \geq 0\}$ is orthonormal.
Consequently, $\W^k(e^y_l)$, $l \geq 0$,  are independent Brownian motions.
Moreover, by Lemma~\ref{lem:onb} again, $\zeta^{y,k}_t= \sum_{j=0}^{k-1} e_j^k \W^k_t(e_j^y)$, $t \geq 0$, thus it is independent to $\W^k(e_l^y)$, $l \geq k$. 
\qed
\end{proof}

\begin{lemma} 
  \label{lem_properties_of_wk}
  For every $k\geq 1$ the processes $\W^k(e_l)$, $l\geq k$, are independent Brownian motions and do not depend on $\G_k$. Furthermore, for each $l>k$, $\W^l_{\cdot \wedge \tau_{k,l}}(e_l)$ is $\G_k$-measurable, where $\tau_{k,l}:=\tau_k-\tau_l$.
\end{lemma}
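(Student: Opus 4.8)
The plan is to treat the two assertions separately, lifting the frozen-path statement of Lemma~\ref{lem_independence_for_wk} to the random flow $\Y$ for the independence part, and invoking Lemma~\ref{lem_measurability_of_y} for the measurability part. For the independence of $\W^k(e_l)$, $l\geq k$, I would first record two structural facts: since $e_l$ is $\F^\Y_{\tau_l}$-measurable and $\tau_l\leq\tau_k$ for $l\geq k$, each $e_l$ with $l\geq k$ as well as $\pr_{\Y_{\tau_k}}$ is $\F^\X_{\tau_k}$-measurable, whereas $\W^k$ is independent of $\F^\X_{\tau_k}$. The decisive point is that $\G_k$ is generated by $\X(\cdot\wedge\tau_k)$, which is $\F^\X_{\tau_k}$-measurable, together with $\zeta^k=\int_0^\cdot\pr_{\Y_{\tau_k}}\mathrm d\W^k_s$, which is a functional of the same independent noise $\W^k$ that produces the $\W^k(e_l)$. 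I would therefore take bounded measurable test functionals $G$ of $(\W^k(e_l))_{l\geq k}$, $H$ of $\X(\cdot\wedge\tau_k)$ and $H'$ of $\zeta^k$, and compute $\E{G\,H\,H'}$ by conditioning on $\F^\X_{\tau_k}$ and pulling out the $\F^\X_{\tau_k}$-measurable factor $H$.

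Inside the conditional expectation I would freeze the $\F^\X_{\tau_k}$-measurable configuration $\Y_{\cdot\wedge\tau_k}$, which determines $e_l^y$ for $l\geq k$ and $\pr_{\Y_{\tau_k}}$, i.e. all the data on which the frozen conclusion of Lemma~\ref{lem_independence_for_wk} depends, using the standard freezing lemma justified by $\W^k\perp\F^\X_{\tau_k}$ exactly as in the proof of Lemma~\ref{lem:shifted_W_is_BM}. For the frozen $y$, Lemma~\ref{lem_independence_for_wk} gives that $(\W^k(e_l^y))_{l\geq k}$ are independent standard Brownian motions, independent of $\zeta^{y,k}$, so the inner expectation factorizes as $\E{G}\cdot\E{H'(\zeta^{y,k})}\big|_{y=\Y}$. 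The first factor is the integral of $G$ against the law of a sequence of independent standard Brownian motions and hence does not depend on the frozen configuration; taking $H=H'\equiv1$ identifies it with $\E{G}$. Recombining yields $\E{G\,H\,H'}=\E{G}\,\E{H\,H'}$, and since products $H\,H'$ generate $\G_k$, this simultaneously shows that $(\W^k(e_l))_{l\geq k}$ are independent standard Brownian motions and that they are independent of $\G_k$.

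For the last claim I would argue by measurability. As $l>k$ forces $\tau_l\leq\tau_k$ and $\tau_{k,l}=\tau_k-\tau_l\geq0$, freezing $e_l$ and $\tau_l$ and applying the strong Markov property gives the identity $\W^l_t(e_l)=\W_{t+\tau_l}(e_l)-\W_{\tau_l}(e_l)$, where $\W_s(e_l):=\sum_j(e_l,h_j)_{L_2}\widehat{\W}_j(s)$ denotes the value of $\W$ in the fixed random direction $e_l$. Stopping at $\tau_{k,l}$ keeps both evaluation times below $\tau_k$, so that $\W^l_{t\wedge\tau_{k,l}}(e_l)=\W_{\tau_l+(t\wedge(\tau_k-\tau_l))}(e_l)-\W_{\tau_l}(e_l)$ is a measurable functional of $e_l$ and of the stopped path $\W_{\cdot\wedge\tau_k}$. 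By Lemma~\ref{lem_measurability_of_y}, $\Y$ is $\G_k$-measurable, hence so are $e_l=e_l^\Y$ (Remark~\ref{rem_measurability_of_ey_inf_case}) and the times $\tau_l,\tau_k$; moreover $\W_{\cdot\wedge\tau_k}$ is $\G_k$-measurable as a component of $\X(\cdot\wedge\tau_k)$, and evaluating a continuous stopped path at a $\G_k$-measurable time is again $\G_k$-measurable. Thus $\W^l_{\cdot\wedge\tau_{k,l}}(e_l)$ is $\G_k$-measurable.

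The main obstacle I anticipate is in the independence part: because $\zeta^k$ is built from the post-$\tau_k$ increments of $\W^k$, the $\sigma$-algebra $\G_k$ is \emph{not} contained in $\F^\X_{\tau_k}$, so one cannot simply invoke independence of $\W^k$ from $\G_k$. The resolution is to handle $(\W^k(e_l))_{l\geq k}$ and $\zeta^k$ at once as functionals of the single independent noise $\W^k$ and to decouple them through the frozen-$y$ factorization of Lemma~\ref{lem_independence_for_wk}; what makes this work is that the law of $(\W^k(e_l^y))_{l\geq k}$ is that of independent standard Brownian motions for every frozen configuration, so its contribution is a universal constant. A minor point to verify is that the conclusion of Lemma~\ref{lem_independence_for_wk} depends on $y$ only through $y_{\cdot\wedge\tau_k^y}$, which is $\F^\X_{\tau_k}$-measurable, thereby legitimizing the freezing.
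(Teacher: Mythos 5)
Your proposal is correct and follows essentially the same route as the paper: the independence part is proved by the same test-function computation, conditioning on $\F^{\X}_{\tau_k}$, freezing $y=\Y_{\cdot\wedge\tau_k}$, and invoking Lemma~\ref{lem_independence_for_wk} so that the frozen law of $(\W^k(e_l^y))_{l\geq k}$ contributes a universal constant, while the measurability part uses the same stopped-path identity $\W^l_{t\wedge\tau_{k,l}}=\W_{(t\wedge\tau_{k,l})+\tau_l}-\W_{\tau_l}$ together with the $\G_k$-measurability of $e_l$ and $\tau_l$ (the paper gets the latter from $\F^{\Y}_{\tau_l}\subseteq\F^{\Y}_{\tau_k}\subseteq\G_k$ rather than from Lemma~\ref{lem_measurability_of_y}, an inessential difference). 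You also correctly pinpoint the key subtlety that $\G_k\not\subseteq\F^{\X}_{\tau_k}$, which is exactly why the joint frozen factorization through Lemma~\ref{lem_independence_for_wk} is needed.
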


\begin{proof} 
Let $n\geq k$ and $m\geq 1$ be fixed. Let $h_j$, $j\geq 0$, be an arbitrary orthonormal basis of $L_2$.  We consider bounded measurable functions 
\begin{align*}
G_0: \mC([0,\infty),\Li) \times \mC[0,\infty)^m &\to \R \\
G_1: \mC([0,\infty),L_2) &\to \R \\
F_l:\mC[0,\infty) &\to \R, \quad l=k,\dots,n.
\end{align*}
We then use  the independence of $\W^k$ from $\F^\X_{\tau_k}$.
\begin{align*}
E&:= \E{ G_0\left(\Y_{\cdot \wedge \tau_k},\left( \W_{\cdot \wedge \tau_k}(h_{j}) \right)_{j=1}^m\right)
G_1(\zeta^k)
\prod_{ l=k }^{ n } F_l\left(\W^k(e_l)\right) } \\
&= \E{ G_0\left(\Y_{\cdot \wedge \tau_k},\left( \W_{\cdot \wedge \tau_k}(h_{j}) \right)_{j=1}^m\right) \E{
G_1(\zeta^k)
\prod_{ l=k }^{ n } F_l\left(\W^k(e_l)\right) \Bigg|\F^{\X}_{\tau_k} }} \\
&= \E{ G_0\left(\Y_{\cdot \wedge \tau_k},\left( \W_{\cdot \wedge \tau_k}(h_{j}) \right)_{j=1}^m\right) \E{
G_1(\zeta^{y,k})
\prod_{ l=k }^{ n } F_l\left(\W^k(e_l^y)\right)  }\Bigg|_{y=\Y_{\cdot\wedge\tau_k}}} .
\end{align*}
Then we apply Lemma~\ref{lem_independence_for_wk} and we denote by  $w_l$, $l=k,\dots,n$, a family of standard independent Brownian motions that do not depend on $\Y$ and $\W$.
\begin{align*}
E&=\E{ G_0\left(\Y_{\cdot \wedge \tau_k},\left( \W_{\cdot \wedge \tau_k}(h_{j}) \right)_{j=1}^m\right)
G_1(\zeta^k)} 
 \prod_{ l=k }^{ n } \E{F_l\left(w_l\right)},
\end{align*}
which achieves the proof of the first part of the statement. 

Furthermore, for every $l>k$,  we remark that $e_l$ and $\tau_l$ are $\G_k$-measurable because they are $\F^{\Y}_{\tau_l}$-measurable and $\F^{\Y}_{\tau_l}\subseteq\F^{\Y}_{\tau_k}\subseteq \G_k$. Then the process $\W^l_{t\wedge \tau_{k,l}}=\W_{(t\wedge \tau_{k,l})+\tau_l}-\W_{\tau_l}$, $t\geq 0$, is $\G_k$-measurable, and consequently, $\W^l_{\cdot \wedge \tau_{k,l}}(e_l)$ is also $\G_k$-measurable. This finishes the proof of the second part of the lemma.
\qed
\end{proof}

Next, we define the gluing map $\Gl:\mC_0[0,\infty)^2 \times [0,\infty) \to \mC_0[0,\infty)$ as follows
\begin{equation}
\label{def:map_Gl}
\Gl(x_1,x_2,r)(t)=x_1(t\wedge r)+x_2\left((t-r)^+\right), \quad t\geq 0,
\end{equation}
where $a^+:=a\vee 0$. It is easily seen that the map $\Gl$ is continuous and therefore measurable.

Since almost surely, $\W^k_t(e_l)=\W^l_{t+\tau_k-\tau_l}(e_l)-\W^l_{\tau_k-\tau_l}(e_l)$, $t\geq 0$, for every $l>k\geq 1$, 
 a simple computation shows that for every $l>k\geq 1$ almost surely 
\begin{equation} 
  \label{equ_gl_for_wk}
  \W^l(e_l)=\Gl\left(\W^l_{\cdot \wedge\tau_{k,l}}(e_l),\W^k(e_l),\tau_{k,l}\right),
\end{equation}
where $\tau_{k,l}:=\tau_k-\tau_l$.  

\begin{proof}[Proof of Lemma~\ref{lem_independence_of_y_and_wk}]
In order to prove this lemma, it is enough to show that for each $k \geq 1$, 
$\W^k(e_k)$ is independent of $\Y$, $\W^l(e_l)$, $l >k$. 

Let us denote by $\H_k$ be the complete $\sigma$-algebra generated by $\G_k$ and $\W^k(e_l)$, $l >k$. 
By Lemma~\ref{lem_properties_of_wk}, the process $\W^k(e_k)$ is independent of $\H_k$. 

Moreover for every $l>k$, using Lemma~\ref{lem_measurability_of_y}, $\Y$ and $\tau_{k,l}$ are $\G_k$-measurable, hence they are $\H_k$-measurable. By Lemma~\ref{lem_properties_of_wk} and  by the definition of $\H_k$, we also see that $\W^l_{\cdot \wedge\tau_{k,l}}(e_l)$ and $\W^k(e_l)$  are $\H_k$-measurable. By~\eqref{equ_gl_for_wk}, it follows that $\W^l(e_l)$ is $\H_k$-measurable for every $l>k$. 
Therefore $\Y$, $\W^l(e_l)$, $l >k$, are independent of $\W^k(e_k)$. 
\qed
\end{proof}

\subsection{Proof of Proposition~\ref{pro_uniqueness_of_coupling} }
\label{parag:proof_prop}

Let $\beta_k$, $k \geq 0$, be independent standard Brownian motions, \textit{independent} of $\X=(\Y,\W)$. 
Recall that $\pr_{\Y_t}^\bot e_k=\I_{\left\{ t\geq \tau_{k} \right\}}e_k$, $t\geq 0$, is a right-continuous $(\F^{\X}_t)$-adapted process in $L_2$. 
Thus we can define  for every $k\geq 0$ 
\begin{equation} 
  \label{equ_family_bk}
  B_k(t):=\beta_k(t \wedge \tau_k)+\int_{ 0 }^{ t } \I_{\left\{ s\geq \tau_{k}\right\}} e_k \cdot \mathrm d\W_s, \quad t\geq 0. 
\end{equation}
Since $\tau_0=+\infty$, we have in particular $B_0(t)= \beta_0(t)$, $t \geq 0$. 

\begin{lemma} 
  \label{lem_b_are_brownian_motions_independent_on_y}
  The processes $B_k$, $k\geq 0$, defined by~\eqref{equ_family_bk}, are independent standard Brownian motions.
\end{lemma}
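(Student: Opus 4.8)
The plan is to recognize each $B_k$ as the image under the gluing map $\Gl$ of equation~\eqref{def:map_Gl} applied to two independent Brownian motions concatenated at the $\Y$-measurable instant $\tau_k$, and then to obtain the Brownian and independence properties by conditioning on $\Y$. First I would rewrite the definition~\eqref{equ_family_bk}. Using the process $\W^k(e_k)$ from~\eqref{def:Wkel} and the identity $(t-\tau_k)^+ + \tau_k = t \vee \tau_k$, one checks that $\int_0^t \I_{\{s \geq \tau_k\}} e_k \cdot \mathrm d\W_s = \W^k_{(t-\tau_k)^+}(e_k)$, whence almost surely
\[
  B_k = \Gl\!\left(\beta_k,\, \W^k(e_k),\, \tau_k\right), \quad k \geq 0 .
\]
For $k=0$ this reduces to $B_0 = \beta_0$, since $\tau_0 = +\infty$.

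Next I would record the joint independence of the three ingredients. By hypothesis $\{\beta_k\}_{k\geq 0}$ is independent of $\X=(\Y,\W)$, hence of the functional $\bigl(\Y, \{\W^k(e_k)\}_{k\geq 1}\bigr)$ of $\X$; and by Lemma~\ref{lem_independence_of_y_and_wk}, $\Y$ and the standard Brownian motions $\W^k(e_k)$, $k\geq 1$, are themselves jointly independent. A short monotone-class argument combining these two facts shows that $\{\beta_k\}_{k\geq 0}$, $\Y$ and $\{\W^k(e_k)\}_{k\geq 1}$ are all jointly independent. In particular, $\{\beta_k\}_{k\geq 0}\cup\{\W^k(e_k)\}_{k\geq 1}$ is a family of independent standard Brownian motions that is independent of $\Y$.

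I would then condition on $\Y$. Given $\Y = y$, each time $\tau_k = \tau_k^y$ is deterministic and each $e_k=e_k^y$ is a fixed unit vector; and since the family $\{\beta_k\}\cup\{\W^k(e_k)\}$ is independent of $\Y$, its conditional law given $\Y=y$ is still that of independent standard Brownian motions. The remaining input is the elementary gluing fact: if $x_1,x_2$ are independent standard Brownian motions and $r\geq 0$ is deterministic, then $\Gl(x_1,x_2,r)$ is again a standard Brownian motion, as one verifies by computing its (centred Gaussian) covariance $s\wedge t$ on the ranges $s,t\leq r$, $s\leq r<t$ and $r<s,t$. Moreover, for distinct $k$ the pairs $(\beta_k,\W^k(e_k^y))$ are built from disjoint members of the above independent family, so the processes $B_k=\Gl(\beta_k,\W^k(e_k^y),\tau_k^y)$ are conditionally independent. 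Thus, conditionally on $\Y=y$, the family $\{B_k\}_{k\geq 0}$ consists of independent standard Brownian motions, for every $y$. Since this conditional law does not depend on $y$, the family $\{B_k\}_{k\geq 0}$ is unconditionally a family of independent standard Brownian motions (and, as a byproduct, independent of $\Y$), which is the assertion of the lemma.

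The main obstacle is the gluing at the \emph{random} time $\tau_k$: one cannot directly claim that concatenating two Brownian motions at a random instant produces a Brownian motion. Conditioning on $\Y$ is precisely what turns $\tau_k$ into a deterministic gluing time while simultaneously preserving, via Lemma~\ref{lem_independence_of_y_and_wk}, the law of the pieces being glued; this reduces the whole statement to the deterministic gluing computation above.
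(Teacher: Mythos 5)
Your proof is correct, but it follows a genuinely different route from the paper's proof of this lemma. The paper argues via L\'evy's characterization: it enlarges the filtration to the one generated by $\X$ and the $\beta_k$'s, notes that each $B_k$ in \eqref{equ_family_bk} is then a continuous martingale (a stopped Brownian motion plus a stochastic integral with progressively measurable integrand), and computes $\left\langle B_k,B_l \right\rangle_t=\I_{\left\{ k=l \right\}}t$ --- the diagonal case from $t\wedge\tau_k+(t-\tau_k)^+=t$, the off-diagonal case from the mutual independence of the $\beta_k$'s, their independence from $\W$, and the orthogonality $(e_k,e_l)_{L_2}=0$ --- so that the multidimensional L\'evy theorem concludes; this requires neither the gluing map nor Lemma~\ref{lem_independence_of_y_and_wk}. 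Your argument --- the identity $B_k=\Gl(\beta_k,\W^k(e_k),\tau_k)$, the joint independence of $\{\beta_k\}$, $\Y$ and $\{\W^k(e_k)\}$, and conditioning on $\Y$ to reduce to gluing at deterministic times --- is precisely the technique the paper reserves for the subsequent Lemma~\ref{lem_independence_of_bk_and_gk}, whose proof displays the same identity and the same freezing argument. What your route buys is a stronger conclusion in one pass: you obtain simultaneously that the $B_k$ are independent standard Brownian motions and that the whole family is independent of $\Y$, thereby subsuming Lemma~\ref{lem_independence_of_bk_and_gk}; the price is the heavier input Lemma~\ref{lem_independence_of_y_and_wk} (itself the culmination of several auxiliary lemmas), whereas the paper's L\'evy argument for the present statement is shorter and self-contained. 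Your individual steps all check out: the rewriting $\int_0^t \I_{\{s\geq\tau_k\}}e_k\cdot\mathrm d\W_s=\W^k_{(t-\tau_k)^+}(e_k)$, the independence bookkeeping (which implicitly uses that $\W^k(e_k)$ is, up to null sets, a measurable functional of $\X$ --- the same convention the paper adopts), the covariance computation showing $\Gl(x_1,x_2,r)$ is a Brownian motion for deterministic $r$, and the disjointness argument giving conditional independence across $k$.
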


\begin{proof} 
  The statement of this lemma directly follows from L\'evy's characterization of Brownian motion~\cite[Theorem~II.6.1]{Ikeda:1989}.
\qed
\end{proof}

We will now use the result of Lemma~\ref{lem_independence_of_y_and_wk} to prove the following lemma.

\begin{lemma} 
  \label{lem_independence_of_bk_and_gk}
  The processes $\Y$, $B_k$, $k\geq 0$, are independent.
\end{lemma}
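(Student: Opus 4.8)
The plan is to reduce the statement to the elementary fact that gluing two independent Brownian motions at an arbitrary \emph{fixed} time again produces a Brownian motion, whose law does not depend on the gluing time, and then to freeze $\Y$. The point is that although $\W$ is not independent of $\Y$, the particular functionals of $\W$ that actually enter the $B_k$ are.

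First I would rewrite each $B_k$ through the gluing map $\Gl$ of~\eqref{def:map_Gl}. Using the notation~\eqref{def:Wkel}, one has $\int_0^t \I_{\{s\geq\tau_k\}}e_k\cdot\mathrm d\W_s=\W^k_{(t-\tau_k)^+}(e_k)$, so that, writing $A_k(u):=\W^k_u(e_k)$, equation~\eqref{equ_family_bk} becomes $B_k=\Gl(\beta_k,A_k,\tau_k)$ for $k\geq 1$, while $B_0=\beta_0$ because $\tau_0=+\infty$. Thus, once the two glued pieces $\beta_k$ and $A_k$ are isolated, the only dependence of $B_k$ on $\Y$ sits in the gluing time $\tau_k=\tau_k^\Y$. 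Next I would assemble the independence of the ingredients: by Lemma~\ref{lem_independence_of_y_and_wk}, $\Y$ and $(A_k)_{k\geq 1}$ are independent and the $A_k$ are standard Brownian motions, and by hypothesis the $\beta_k$ are independent standard Brownian motions independent of $(\Y,\W)$, hence of $(\Y,(A_k)_{k\geq 1})$ since each $A_k$ is $\sigma(\Y,\W)$-measurable. A short $\pi$-system argument then shows that the family $\beta_0,(\beta_k)_{k\geq 1},(A_k)_{k\geq 1}$ is jointly independent and jointly independent of $\Y$, so that its conditional law given $\Y$ is the deterministic law of independent standard Brownian motions.

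Then I would freeze $\Y$. For bounded measurable test functions $\Phi$ on the path space of $\Y$ and $F_k$ on $\mC[0,\infty)$, $k\geq 0$, conditioning on $\Y$ and using the independence just established gives $\E{\Phi(\Y)\prod_{k\geq 0}F_k(B_k)}=\E{\Phi(\Y)\,G\bigl((\tau_k^\Y)_{k\geq 1}\bigr)}$, where $G(r)=\E{F_0(\beta_0)}\,\E{\prod_{k\geq 1}F_k(\Gl(\beta_k,A_k,r_k))}$ is computed with the times $r=(r_k)_{k\geq 1}$ frozen. By the mutual independence of the pairs $(\beta_k,A_k)$ the expectation factorizes over $k$, and for each fixed $r_k$ the glued process $\Gl(\beta_k,A_k,r_k)$ is a continuous centered Gaussian process with covariance $\min(s,t)$ (a straightforward case check on $s\leq t\lessgtr r_k$), hence a standard Brownian motion. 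Consequently $\E{F_k(\Gl(\beta_k,A_k,r_k))}$ equals the integral of $F_k$ against Wiener measure, \emph{independently} of $r_k$, so $G$ is a constant. Therefore $\E{\Phi(\Y)\prod_{k\geq 0}F_k(B_k)}=\E{\Phi(\Y)}\prod_{k\geq 0}\E{F_k(B_k)}$, which is exactly the asserted independence of $\Y$ and the $B_k$.

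The main obstacle is precisely the interplay that forces this indirect route: $\W$ itself is \emph{not} independent of $\Y$ (indeed $\Y$ is a functional of $\W$), so one cannot freeze $\Y$ at the level of $\W$ together with the random shift $\tau_k$. The argument goes through only because the specific functional $A_k=\W^k(e_k)$ entering $B_k$ \emph{is} independent of $\Y$, which is the content of Lemma~\ref{lem_independence_of_y_and_wk}; isolating this functional and checking that the law of the gluing is insensitive to the (random) gluing time is where the care is required.
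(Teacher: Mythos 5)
Your proof is correct and follows essentially the same route as the paper's: writing $B_k=\Gl\left(\beta_k,\W^k(e_k),\tau_k\right)$, combining Lemma~\ref{lem_independence_of_y_and_wk} with the independence of the $\beta_k$ from $(\Y,\W)$ to get joint independence, conditioning on $\Y$ to freeze the gluing times, and using that gluing independent standard Brownian motions at a fixed time yields a standard Brownian motion whose law does not depend on that time. One small attribution slip: the fact that $\W^k(e_k)$ is a standard Brownian motion comes from Lemma~\ref{lem_properties_of_wk} (or Lemma~\ref{lem_independence_for_wk}), not from Lemma~\ref{lem_independence_of_y_and_wk}, which asserts only independence.
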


\begin{proof} 
Since $B_0=\beta_0$ is independent of $\Y$ by definition and of $B_k$, $k\geq 1$, by Lemma~\ref{lem_b_are_brownian_motions_independent_on_y}, it is enough to prove that the processes $\Y$, $B_k$, $k \in [n]$, are independent, for any given $n$. 

Putting together~\eqref{def:Wkel}, \eqref{def:map_Gl} and~\eqref{equ_family_bk}, we have
  \[
    B_k=\Gl\left( \beta_k,\W^k(e_k),\tau_k \right), \quad k \in [n].
  \]
Since $\beta_k$, $k \in [n]$, is independent of $(\Y, \W)$ and using Lemma~\ref{lem_independence_of_y_and_wk}, we deduce that the processes $\Y$, $\beta_k$, $\W^k(e_k)$,  $k\in [n]$, are independent. Moreover, $\tau_k$, $k\in [n]$, are measurable 
with respect to $\F^{\Y}:=\sigma(\Y)$.   
Let $G_0:\mC([0,\infty),\Li) \to \R $, $F_k:\mC[0,\infty) \to \R $, $k\in [n]$, be bounded measurable functions.
We have
\begin{align*}
 \E{ G_0\left( \Y \right) \prod_{ k=1 }^{ n } F_k\left( B_k \right) }   &= \E{G_0\left( \Y \right) \E{ \prod_{ k=1 }^{ n } F_k\left( \Gl\left( \beta_k,\W^k(e_k),\tau_k \right) \right)\bigg| \F^{\Y} }}\\
    &= \E{G_0\left( \Y \right)\E{\prod_{ k=1 }^{ n } F_k\left( \Gl\left(\beta_k,\W^k(e_k),\tau_k^y \right) \right) }\bigg|_{y=\Y} }.
\end{align*} 
Note that if $w_1$ and $w_2$ are independent  standard Brownian motions and $r>0$, then the process $\Gl(w_1,w_2,r)$ is a standard Brownian motion. It follows that for any fixed $y \in \coal$, $\Gl\left( \beta_k,\W^k(e_k), \tau_k^y \right)$, $k\in [n]$, is a family of independent standard Brownian motions. Thus for every $y \in \coal$, 
\begin{align*}
\E{\prod_{ k=1 }^{ n } F_k\left( \Gl\left(\beta_k,\W^k(e_k),\tau_k^y \right) \right) }
=\prod_{ k=1 }^{ n }\E{ F_k\left( w_k \right) }, 
\end{align*}
where $w_k$, $k \in [n]$, denotes an arbitrary family of independent  standard Brownian motions. This easily implies the statement of the lemma, because $B_k$, $k \in [n]$, are independent standard Brownian motions by Lemma~\ref{lem_b_are_brownian_motions_independent_on_y}.
\qed
\end{proof}

Now, we  finish the proof of Proposition~\ref{pro_uniqueness_of_coupling}.

\begin{proof}[Proof of Proposition~\ref{pro_uniqueness_of_coupling}]
Define
\[
  B_t(h):=\sum_{ k=0 }^{ \infty } (h,e_k)_{L_2}B_k(t), \quad h \in L_2.
\]
Since $B_k$, $k\geq 0$, are independent Brownian motions that do not depend on $\Y$ and hence on $e_k$, $k\geq 1$, one can show similarly to the proof of Lemma~\ref{lem:shifted_W_is_BM} that the series converges in $\mC[0,\infty)$ almost surely for every $h \in L_2$, and $B_t$, $t\geq 0$, is a cylindrical Wiener process in $L_2$ starting at $0$. 

Moreover, $B$ is independent of $\Y$. 
Indeed, for any $n \geq 1$, for any  $h_1, \dots, h_n$ in $L_2$, for any bounded and measurable functions $F: \mC[0, \infty) ^n \to \R$ and $G: \mC([0,\infty),\Li) \to \R$, 
\begin{align*}
\E{F\left(B(h_1), \dots, B(h_n)\right) G\left(\Y\right)}
&= \E{\E{F\left(B(h_1), \dots, B(h_n)\right) \big| \F^{\Y}}G\left(\Y\right)} \\
&= \E{ \E{F\left(w_1, \dots, w_n\right)}   G\left(\Y\right) }\\
&= \E{F\left(B(h_1), \dots, B(h_n)\right)} \E{G\left(\Y\right)},
\end{align*}
where $w_k$, $k \in [n]$, denotes an arbitrary family of independent  standard Brownian motions.

Moreover, since $\pr_{\Y_t}^\bot e_k=\I_{\left\{ t\geq \tau_{k} \right\}}e_k$, we easily check that
\begin{align*}
\int_{ 0 }^{ t } \pr_{\Y_s}^\bot h \cdot \mathrm dB_s
=  \int_{ 0 }^{ t } \pr_{\Y_s}^\bot h \cdot \mathrm d\W_s &=  \int_0^t h \cdot \mathrm d\W_s - \int_{ 0 }^{ t } \pr_{\Y_s} h \cdot \mathrm d\W_s \\
&= \W_t (h) - (g , h)_{L_2}  - ( \Y_t - g,h )_{L_2}
\end{align*}
for all $t\geq 0$, which implies equality~\eqref{equ_uniqueness_coupling}.
\qed
\end{proof}

\appendix

\section{Appendix: Regular conditional probability}
\label{appendix:cond_prob}

\subsection{Definition}

Let $\bE$ be a Polish space and $\bF$ be a metric space. We consider random elements $X$ and $\xi$ in $\bE$ and $\bF$, respectively, defined on the same probability space $(\Omega, \F, \p)$. Let also $\B(\bE)$ (resp. $\B(\bF)$) denote the Borel $\sigma$-algebra on $\bE$ (resp. $\bF$) and $\cP(\bE)$ be the space of probability measures on $(\bE,\B(\bE))$ endowed with the topology of weak convergence.

\begin{definition}
\label{def:regular_conditional_probability}
A function $p: \B(\bE) \times \bF \to [0,1]$ is  a \textit{regular conditional probability of $X$ given $\xi$}  if
\begin{itemize}
\item[(R1)] for every $z \in \bF$, $p(\cdot,z)\in \cP(\bE)$;
\item[(R2)] for every $A \in \B(\bE)$, $z \mapsto p(A, z)$ is measurable;
\item[(R3)] for every $A \in \B(\bE)$ and $B \in \B(\bF)$, 
\begin{align*}
\P{X \in A,\ \xi \in B}=\int_B p(A,z)\; \mathbb P^\xi (\mathrm dz),
\end{align*}
where $\mathbb P^\xi:=\mathbb P \circ \xi^{-1}$ denotes the law of $\xi$.
\end{itemize} 
\end{definition}

Recall the following existence and uniqueness result (see e.g.~\cite[Theorem 6.3]{Kallenberg:2002}):
\begin{proposition}
\label{prop:kallenb}
There exists a regular conditional probability of $X$ given~$\xi$. Moreover, it is unique in the following sense: if $p$ and $p'$ are regular conditional probabilities of $X$ given $\xi$, then 
\begin{align*}
\p^\xi \left[ z\in \bF:\ p(\cdot,z)=p'(\cdot,z) \right]=1.
\end{align*}
\end{proposition}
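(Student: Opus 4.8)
The plan is to prove existence and uniqueness separately, presenting existence first although uniqueness is structurally simpler. Throughout, the essential use of the hypothesis that $\bE$ is Polish is that $\bE$ is Borel-isomorphic to a Borel subset of $\R$, which reduces the whole construction to the real line.

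For existence, let $\phi:\bE\to\R$ be a Borel isomorphism onto a Borel set $\phi(\bE)$, and replace $X$ by $\phi(X)$; once a kernel is built on $\R$ it will be transported back by $\phi^{-1}$. On $\R$ I would construct the conditional distribution function. For each rational $r$, the random variable $\E{\I_{\{\phi(X)\leq r\}}\mid \xi}$ is $\sigma(\xi)$-measurable, so by the Doob--Dynkin factorization lemma there is a measurable $F(r,\cdot):\bF\to[0,1]$ with $\E{\I_{\{\phi(X)\leq r\}}\mid \xi}=F(r,\xi)$ a.s. The subtlety is that each $F(r,\cdot)$ is pinned down only up to a $\p^\xi$-null set. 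Since monotonicity in $r$, right-continuity along rationals, and the limits $0$ and $1$ at $\mp\infty$ are each almost-sure statements indexed by the \emph{countable} set $\Q$, their conjunction holds off a single $\p^\xi$-null set $N$; on $N^c$ one redefines $r\mapsto F(r,z)$ by right-continuous limits to obtain a genuine distribution function, giving a probability measure $p(\cdot,z)$ on $\R$, and on $N$ one sets $p(\cdot,z)$ to be a fixed arbitrary element of $\cP(\R)$. Property (R1) holds by construction; (R2) follows because $z\mapsto p(A,z)$ is measurable on the $\pi$-system of half-lines and the class of admissible $A$ is a $\lambda$-system, so a monotone-class argument covers all of $\B(\R)$; and (R3) holds for half-lines by the defining equality of $F$ and extends to all Borel sets the same way. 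Finally, since $\phi(X)\in\phi(\bE)$, one checks $p(\phi(\bE)^c,z)=0$ for $\p^\xi$-a.e.\ $z$, so the measure concentrates on $\phi(\bE)$ and pulls back to a regular conditional probability of $X$ given $\xi$ on $\bE$.

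For uniqueness, suppose $p$ and $p'$ both satisfy (R1)--(R3). For each fixed $A\in\B(\bE)$, (R3) forces both $p(A,\cdot)$ and $p'(A,\cdot)$ to be versions of the factored conditional expectation of $\I_{\{X\in A\}}$ given $\xi$, hence $p(A,z)=p'(A,z)$ for $\p^\xi$-a.e.\ $z$. Because $\bE$ is second countable, $\B(\bE)$ is generated by a countable $\pi$-system $\mathcal A$ (e.g.\ a countable base closed under finite intersections). Intersecting the full-measure sets indexed by the countably many $A\in\mathcal A$ yields one $\p^\xi$-full-measure set on which $p(A,z)=p'(A,z)$ simultaneously for all $A\in\mathcal A$; for each such $z$ the measures $p(\cdot,z)$ and $p'(\cdot,z)$ agree on the generating $\pi$-system $\mathcal A$, hence on all of $\B(\bE)$ by Dynkin's theorem.

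The main obstacle is the existence part: upgrading the almost-surely-defined, countably-indexed family $\{F(r,\cdot)\}_{r\in\Q}$ into a single measurable map $z\mapsto p(\cdot,z)$ taking values in $\cP(\bE)$. The crux is to isolate one null set off which all the measure-theoretic regularity holds at once, and this is exactly why the reduction to $\R$ together with the countability of $\Q$ is indispensable, since for an uncountable index set the relevant almost-sure statements cannot in general be patched together.
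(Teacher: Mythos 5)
Your proposal is correct, but note that the paper does not prove this proposition at all: it is recalled as a known result with a citation to \cite[Theorem~6.3]{Kallenberg}. Your argument is essentially the standard proof of that cited theorem --- reduction to $\R$ via the Borel isomorphism theorem for Polish spaces, construction of conditional distribution functions over the rationals patched off a single $\p^\xi$-null set, a $\pi$-$\lambda$ argument for (R2) and (R3), and uniqueness via a countable generating $\pi$-system --- so it matches the approach the paper implicitly relies on.
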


\subsection{Proof of Lemma~\ref{lem_continuity_and_convergence} }
\label{parag:proof_of_lem_continuity_and_convergence}

We first recall  that the sufficiency of Lemma~\ref{lem_continuity_and_convergence} immediately follows from the continuous mapping theorem. 

We next prove the necessity. 
We first choose a family $\{ f_k,\ k\geq 1 \}\subset \mC_b(\bE)$ which strongly separate points in $\bE$. One can show that such a family exists since $\bE$ is separable (see also~\cite[Lemma~2]{Blount:2010}).  By~\cite[Theorem~4.5]{Ethier:1986} (or~\cite[Theorem~6]{Blount:2010} for weaker assumptions on the space $\bE$), any sequence $\{\mu_n\}_{n\geq 1}$ of probability measures on $\bE$ converges weakly to a probability measure $\mu$ if and only if
\[
  \int_{ \bE }f_k(x)\mu_n(\mathrm dx)\to\int_{ \bE }   f_k(x)\mu(\mathrm dx), \quad n \to \infty,     
\]
for all $k\geq 1$.

We define the following sets 
\begin{align*}
  A_m^{k,+}&= \left\{ z \in \bF: \int_{ \bE }   f_k(x)p(\mathrm dx,z)-\int_{ \bE } f_k(x)  \nu(\mathrm dx)\geq \frac{1}{ m }   \right\},\\
  A_m^{k,-}&= \left\{ z \in \bF: \int_{ \bE }  f_k(x) \nu(\mathrm dx)-\int_{ \bE }   f_k(x)p(\mathrm dx,z)\geq \frac{1}{ m }   \right\}
\end{align*}
for all $k\geq 1$ and $m\geq 1$. Let also $A_m^k=A_m^{k,+}\cup A_m^{k,-}$.

\begin{lemma} 
  \label{lem_technical_lemma_about_continuity_of_p}
  If for every $k\geq 1$ and $m\geq 1$ there exists $\delta_m^k>0$ such that 
  \begin{equation} 
  \label{equ_equality_for_amk_and_bmk}
  \p^{{\T(X)}}\left[A_m^k \cap B_m^k\right]=0,
  \end{equation}
  where $B_m^k$ is the ball in $\bF$ with center $z_0$ and radius $\delta_m^k$, then $p$ has a version continuous at $z_0$. Moreover, it can be taken as 
  \[
    p'(\cdot ,z)=
    \begin{cases}
      p(\cdot,z), & \mbox{ if } z \not\in \bigcup_{ k,m=1 }^{ \infty }\left( A_m^k \cap B_m^k \right), \\
      \nu, & \mbox{ otherwise.} 
    \end{cases}
  \]
\end{lemma}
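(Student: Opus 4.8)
The plan is to verify that the proposed $p'$ is a legitimate version of the regular conditional probability of $X$ given $\T(X)$ (so that it may replace $p$), and then to establish its continuity at $z_0$ directly through the separating family $\{f_k,\ k\geq 1\}$. Throughout, write $D:=\bigcup_{k,m=1}^{\infty}\left(A_m^k\cap B_m^k\right)$, so that $p'(\cdot,z)=p(\cdot,z)$ for $z\notin D$ and $p'(\cdot,z)=\nu$ for $z\in D$.

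First I would check measurability and that $p'$ is a version. Each map $z\mapsto\int_{\bE}f_k(x)\,p(\mathrm dx,z)$ is measurable by (R2) together with a monotone-class argument, so each $A_m^{k,\pm}$, hence each $A_m^k$, is Borel; the balls $B_m^k$ are Borel as well, so $D\in\B(\bF)$. By hypothesis~\eqref{equ_equality_for_amk_and_bmk} every $A_m^k\cap B_m^k$ is $\p^{\T(X)}$-null, and being a countable union, $D$ satisfies $\p^{\T(X)}[D]=0$. Since $p'$ agrees with $p$ off the null set $D$ and since $\nu\in\cP(\bE)$, the function $p'$ still satisfies (R1) and (R2), while (R3) is unaffected by modifying the integrand on a $\p^{\T(X)}$-null set; thus $p'$ is again a regular conditional probability of $X$ given $\T(X)$.

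Next I would identify the value $p'(\cdot,z_0)$ and prove continuity. As $z_0$ is the centre of every ball, $z_0\in B_m^k$ for all $k,m$. If $z_0\in D$ then $p'(\cdot,z_0)=\nu$ by definition; if $z_0\notin D$ then $z_0\notin A_m^k$ for every $k,m$, so $\left|\int_{\bE}f_k\,\mathrm dp(\cdot,z_0)-\int_{\bE}f_k\,\mathrm d\nu\right|<\frac1m$ for all $m$, whence $\int_{\bE}f_k\,\mathrm dp(\cdot,z_0)=\int_{\bE}f_k\,\mathrm d\nu$ for each $k$ and therefore $p(\cdot,z_0)=p'(\cdot,z_0)=\nu$ by the strong-separation criterion recalled before the statement. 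In either case $p'(\cdot,z_0)=\nu$. For continuity, fix $k$ and $m$ and take any $z$ with $d_{\bF}(z,z_0)<\delta_m^k$, i.e.\ $z\in B_m^k$. If $z\in D$ then $\int_{\bE}f_k\,\mathrm dp'(\cdot,z)=\int_{\bE}f_k\,\mathrm d\nu$ exactly; if $z\notin D$ then $z\notin A_m^k$, and hence $\left|\int_{\bE}f_k\,\mathrm dp'(\cdot,z)-\int_{\bE}f_k\,\mathrm d\nu\right|=\left|\int_{\bE}f_k\,\mathrm dp(\cdot,z)-\int_{\bE}f_k\,\mathrm d\nu\right|<\frac1m$. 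Thus $\limsup_{z\to z_0}\left|\int_{\bE}f_k\,\mathrm dp'(\cdot,z)-\int_{\bE}f_k\,\mathrm d\nu\right|\leq\frac1m$ for every $m$, so $\int_{\bE}f_k\,\mathrm dp'(\cdot,z)\to\int_{\bE}f_k\,\mathrm d\nu$ as $z\to z_0$, for each $k$. By the separation criterion this is precisely the weak convergence $p'(\cdot,z)\to\nu=p'(\cdot,z_0)$, i.e.\ $p'$ is continuous at $z_0$.

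The argument is mostly bookkeeping once the sets are arranged; the only genuinely delicate point is logical rather than computational. One must ensure that erasing $p$ on $D$ keeps it a version (which relies on $\p^{\T(X)}[D]=0$, exactly what~\eqref{equ_equality_for_amk_and_bmk} supplies) while simultaneously overwriting the kernel with $\nu$ on precisely those directions that would violate the $\frac1m$-closeness inside each ball $B_m^k$. I do not anticipate any serious obstacle beyond organising this case analysis carefully and confirming the measurability of $D$.
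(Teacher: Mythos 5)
Your proof is correct and takes essentially the same approach as the paper's: inside each ball $B_m^k$, points outside the null set $D=\bigcup_{k,m}\left(A_m^k\cap B_m^k\right)$ lie outside $A_m^k$ and hence give integrals within $\frac{1}{m}$ of those of $\nu$, and the convergence-determining family $\{f_k\}$ then yields weak continuity at $z_0$. You additionally spell out the measurability of $D$, the fact that $p'$ remains a regular conditional probability, and the identification $p'(\cdot,z_0)=\nu$ — details the paper leaves implicit (its sequential argument covers them via constant sequences), so your write-up is if anything slightly more complete.
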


\begin{proof} 
  We first remark that according to~\eqref{equ_equality_for_amk_and_bmk}, $p'=p$ $\p^{{\T(X)}}$-a.e. Next, let $z_n \to z_0$ in $\bF$ as $n \to \infty$. Without loss of generality, we may assume that $z_n \not\in \bigcup_{ k,m=1 }^{ \infty }\left( A_m^k \cap B_m^k \right)$ for all $n\geq 1$. Let $m\geq 1$ and $k\geq 1$ be fixed. Then there exists a number $N$ such that $z_n \in B_m^k$ for all $n\geq N$. Consequently, $z_n \not\in A_m^k$, $\forall n\geq N$, that yields
  \[
    \left| \int_{ \bE }   f_k(x)p(\mathrm dx,z_n)-\int_{ \bE }   f_k(x)\nu(\mathrm dx)   \right|< \frac{1}{ m }
  \]
  for all $n\geq N$. This finishes the proof of the lemma.
\qed
\end{proof}

We come back to the proof of Lemma~\ref{lem_continuity_and_convergence}. Let us assume that $p$ has no version continuous at $z_0$. Then, according to Lemma~\ref{lem_technical_lemma_about_continuity_of_p}, there exists $k\geq 1$ and $m\geq 1$ such that for every $\delta>0$ 
\[
  \p^{{\T(X)}}\left[ A_m^k \cap B_{\delta} \right]>0,
\]
where $B_{\delta}$ denotes the ball with center $z_0$ and radius $\delta$. 
Without loss of generality, we may assume that $\p^{{\T(X)}}\left[ A_m^{k,+} \cap B_{\delta} \right]>0$ for every $\delta>0$.
For every $n\geq 1$, let $\xi^n$ be a random element in $\bF$ with distribution
\[
  \p^{\xi^n}[A]=\int_{ \bF }q_n(z)\p^{{\T(X)}}[\mathrm dz]     , \quad A \in \B(\bF) ,
\]
where
\[
  q_n(z)=\frac{1}{ \p^{{\T(X)}}\left[ A_m^{k,+} \cap B_{ \frac{1}{ n }} \right]} \I_{A_m^{k,+} \cap B_{ \frac{1}{ n }}}(z), \quad z \in \bF.
\]
By the construction, $\p^{\xi^n}\ll \p^{{\T(X)}}$, $n\geq 1$. Moreover, it is easy to see that $\xi^n \to z_0$ in distribution as $n \to \infty$. But 
\[
  \E{ \int_{ \bE }   f_k(x)p(\mathrm dx,\xi^n)  } \not\to \int_{ \bE }   f_k(x)\nu(\mathrm dx),\quad n \to \infty. 
\]
Indeed, for every $n\geq 1$ the random element $\xi^n$ takes values almost surely in $A_m^{k,+}$, which implies that 
\[
 \E{ \int_{ \bE }   f_k(x)p(\mathrm dx,\xi^n)  }- \int_{ \bE }  f_k(x)\nu(\mathrm dx)\geq \frac{1}{ m },\quad n\geq 1.
\]
We have obtained the contradiction with assumption~\eqref{equ_value_of_p_along_xi_n}. This finishes the proof of Lemma~\ref{lem_continuity_and_convergence}.

\section{Some properties of MMAF}
\label{sec:some_properties_of_mmaf}

\subsection{Measurability of coalescing set}

We recall that the set $\DC$ denotes the space of \cdl functions from $(0,1)$ to $\mC[0,\infty)$ equipped with the Skorokhod distance, which makes it a Polish space. Set
\[
  \DI:=\left\{ y \in \DC:\ \forall 0<u<v<1,\ y_t(u)\leq y_t(v)\ \forall t\geq 0 \right\}.
\]
It is easily seen that $\DI$ a closed subspace of $\DC$. So, we will consider $\DI$ as a Polish subspace of $\DC$. Let 
\begin{align*}
  \DIL:&= \left\{ y \in \DI:\ \forall T \in \N,\ \exists K \in \N,\ \exists \delta \in  \Q_+, \max\limits_{ t \in \left[ \frac{1}{ T },T\right] }\|y_t\|_{L_{2+\delta}}\leq K\right\}\\
  &\quad \cap\left\{ y \in \DI:\ \|y_t-y_0\|_{L_2} \to 0,\ t \to 0 \right\}=:D^1 \cap D^2. 
\end{align*}

\begin{lemma} 
  \label{lem_measurability_between_d_and_l}
  For every $A\in\B(\DI)$ the set $A \cap \DIL$ is a Borel measurable subset of $\CL:=\mC([0,\infty),\Li)$.
\end{lemma}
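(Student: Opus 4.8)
The plan is to realise $\DIL$ as the image of an injective Borel map into $\CL$ and then invoke the Lusin--Souslin theorem, which states that an injective Borel map between standard Borel spaces carries Borel sets to Borel sets. Concretely, I would define the ``transpose'' map $\iota : \DIL \to \CL$ sending $y$ to the path $t \mapsto [y_t]$, where $[y_t]$ denotes the $L_2$-class of the non-decreasing càdlàg function $u\mapsto y_t(u)$; this $\iota$ is exactly the identification under which the statement of the lemma is to be read, so that ``$A\cap\DIL$ as a subset of $\CL$'' means $\iota(A\cap\DIL)$. Note that $\Li$ is closed in $L_2$, hence Polish, so $\CL=\mC([0,\infty),\Li)$ is Polish, while $\DI$ is Polish by the remark preceding the lemma. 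Granting that $\DIL$ is a Borel subset of $\DI$ and that $\iota$ is injective and Borel, for any $A\in\B(\DI)$ the set $A\cap\DIL$ is Borel in the standard Borel space $\DIL$, and therefore $\iota(A\cap\DIL)$ is Borel in $\CL$ by Lusin--Souslin (see e.g.~\cite{Kallenberg}). This is the desired conclusion.

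First I would check that $\DIL=D^1\cap D^2$ is Borel in $\DI$. For fixed $t\geq 0$ and $h\in L_2$ the map $\DI\ni y\mapsto (y_t,h)_{L_2}=\int_0^1 y_t(u)h(u)\,\mathrm du$ is Borel, since the evaluation $y\mapsto y_t(u)$ is Borel on the Skorokhod space for each fixed $u$ and one integrates in $u$ via Fubini (joint measurability in $(y,u)$ following from right-continuity in $u$). Consequently $y\mapsto \|y_t\|_{L_{2+\delta}}$ and $y\mapsto\|y_t-y_0\|_{L_2}$ are Borel. Using right-continuity in $u$ together with the continuity of $t\mapsto y_t(u)$ to replace the supremum over $t\in[1/T,T]$ by a supremum over a countable dense set of times, the conditions defining $D^1$ and $D^2$ become countable unions and intersections of Borel sets (indexed by $T\in\N$, $K\in\N$, $\delta\in\Q_+$, and rational parameters); hence $\DIL\in\B(\DI)$.

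Next I would show that $\iota$ is well defined into $\CL$, injective, and Borel. For well-definedness I must prove that $t\mapsto[y_t]$ is $L_2$-continuous for $y\in\DIL$. On a compact interval $[1/T,T]$, condition $D^1$ gives a uniform bound on $\|y_t\|_{L_{2+\delta}}$, which makes $\{\,|y_t|^2 : t\in[1/T,T]\,\}$ uniformly integrable; combined with the pointwise-in-$u$ continuity $y_t(u)\to y_s(u)$ coming from $y(u)\in\mC[0,\infty)$, Vitali's convergence theorem yields $L_2$-continuity on $(0,\infty)$, while $D^2$ supplies continuity at $t=0$. Injectivity follows because $[y_t]=[y'_t]$ for all $t$ forces $y_t(u)=y'_t(u)$ for Lebesgue-a.e. $u$, hence for every $u$ by right-continuity, so $y=y'$ in $\DC$. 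Finally, Borel measurability of $\iota$ is obtained by observing that $\B(\CL)$ is generated by the maps $w\mapsto(w_{t_j},h_i)_{L_2}$ for countably many times $t_j$ and a countable dense family $\{h_i\}\subset L_2$, and that $y\mapsto(\iota(y)_{t_j},h_i)_{L_2}=\int_0^1 y_{t_j}(u)h_i(u)\,\mathrm du$ is Borel by the argument of the previous paragraph.

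The main obstacle is this middle step: verifying that $\iota$ genuinely lands in $\CL$ and is both injective and Borel. Here the two defining conditions of $\DIL$ are used essentially — $D^1$ for the uniform integrability that upgrades pointwise-in-$u$ continuity to $L_2$-continuity on $(0,\infty)$, and $D^2$ for continuity at $t=0$ — and the interplay between the pathwise (càdlàg-in-$u$) description in $\DI$ and the $L_2$-valued description in $\CL$ must be handled carefully, in particular the reduction of uncountable suprema to countable ones and the joint measurability needed for the Fubini steps. Once $\iota$ is shown to be an injective Borel map, the conclusion is immediate from the Lusin--Souslin theorem.
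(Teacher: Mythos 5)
Your proposal is correct and follows essentially the same route as the paper's proof: establish that $\DIL$ is Borel in $\DI$, show that the natural identification map from $\DIL$ into $\CL$ is well-defined, injective and Borel, and conclude by the injective-Borel-image theorem (you cite Lusin--Souslin; the paper cites the equivalent Kuratowski theorem from Parthasarathy). The sub-steps are implemented with interchangeable technical devices — the paper proves closedness in $\DI$ of the sets $\left\{ y:\ \|y_t-f\|_{L_p}\leq K \right\}$ and invokes a relative-compactness lemma for monotone functions where you use a countable-supremum reduction, Fubini, and Vitali's convergence theorem — but the structure and the key measure-theoretic ingredient are the same.
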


\begin{proof} 
  First we are going to show that $\DIL$ is a subset of $\CL$. So, we take $y \in \DIL$ and check that $y$ is a continuous $L_2$-valued function. The continuity of $y$ at $0$ follows from the definition of $\DIL$. Let $t>0$ and $t_n \to t$ as $n\to\infty$. Without loss of generality, we may assume that $t_n \in [\frac{1}{ T },T]$ for some $T \in \N$ and all $n\geq 1$. We are going to show that $y_{t_n} \to y_t$ in $L_2$, $n \to \infty$. Let us note that the sequence $\{ y_{t_n} \}_{n\geq 1}$ is relatively compact, according to~\cite[Lemma~5.1]{Konarovskyi:EJP:2017} and the fact that $y_{t_n} \in \Li$, $n\geq 1$, are uniformly bounded in $L_{2+\delta}$-norm. This implies that there exists a subsequence $N \subseteq \N$ and $f \in \Li$ such that $y_{t_n} \to f$ in $L_2$ along $N$. On the other hand, $y_{t_n} \to y_t$ pointwise, that implies the equality $f=y_t$. Moreover, it yields that every convergent subsequence of $\{y_{t_n}\}_{n\geq 1}$ converges to $y_t$ in $L_2$. Using the relatively compactness of $\{ y_{t_n} \}_{n\geq 1}$, we can conclude that $y_{t_n} \to y_t$ in $L_2$ as $n\to\infty$. Thus, $y \in \CL$.

  Next, we will check that the set $\DIL$ is measurable in $\DI$. We fix $t\geq 0$ and make the following observation. For every $y \in \DI$ the real-valued function $y_t$ is non-decreasing on $(0,1)$. This implies that it has at most countable number of discontinuous points. Hence, by~\cite[Proposition~3.5.3]{Ethier:1986}, the convergence $y^n \to y$ in $\DI$ implies the convergence of $y^n_t \to y_t$ a.e. (with respect to the Lebesgue measure on $[0,1]$). Using Fatou's lemma, we get that the set 
  \begin{equation} 
  \label{equ_closability_of_norm_set}
  \Lambda(t,f,K,p):=\left\{ y \in \DI:\ \|y_t-f\|_{L_p}\leq K\right\} \quad \mbox{is closed in}\ \DI
  \end{equation}
  for every $K\geq 0$, $p\geq 2$ and $f \in L_p$. Hence the set 
  \[
    D^1=\bigcap_{ T=1 }^{ \infty } \bigcup_{ K=1 }^{ \infty } \bigcup_{ \delta \in \Q_+ } \bigcap_{t \in \left[ \frac{1}{ T },T \right]}   \Lambda(t,0,K,2+\delta)
  \]
  is Borel measurable in $\DI$. Using the standard argument and~\eqref{equ_closability_of_norm_set}, one can check the measurability of $D^2$. So, the set $\DIL=D^1 \cap D^2$ is Borel measurable in $\DI$.

  We claim that the identity map $\varPhi:\DIL \to \CL$ is Borel measurable. Indeed, let
  \[
    B^T_r(y):=\left\{ x \in \CL:\ \max\limits_{ t \in[0,T] }\|x_t-y_t\|_{L_2}\leq r \right\}.
  \]
  Then the preimage
  \[
    \varPhi^{-1}\left( B^T_r(y) \right)=\bigcap_{ t \in[0,T] }   \Lambda(t,y_t,r,2)
  \]
  is a closed set in $\DI$, by~\eqref{equ_closability_of_norm_set}.
  Since the Borel $\sigma$-algebra on $\CL$ is generated by the family $\left\{ B^T_r(y),\ T,r>0,\ y \in \CL \right\}$, $\varPhi$ is a Borel measurable function. Moreover, it is an injective map. So, using the Kuratowski theorem (see~\cite[Theorem~3.9]{Parthasarathy:1967}) and the fact that $A \cap \DIL \in \B(\DI)$, we obtain that the image $\varPhi(A \cap \DIL)=A \cap \DIL \in \B(\CL)$  for every $A \in \B(\DI)$.
\qed
\end{proof}

\begin{lemma}
\label{lemma:measurability_step}
Let $\coal$ be defined in Section~\ref{parag:mmaf_coal}. Then $\coal$ is a Borel measurable subset of $\CL$.
\end{lemma}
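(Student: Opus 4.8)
The plan is to transfer the problem from $\CL=\mC([0,\infty),\Li)$ to the Skorokhod space $\DI$, where the pointwise-in-$u$ structure is available, and then to invoke Lemma~\ref{lem_measurability_between_d_and_l}. Concretely, let $A\subseteq\DI$ be the set of those $y\in\DI$ satisfying conditions (G2)--(G5) of Definition~\ref{definition_coal} (for $y\in\DI$ these are genuine pointwise statements about $(u,t)\mapsto y_t(u)$). I first check that, under the identity embedding $\varPhi:\DIL\to\CL$ from the proof of Lemma~\ref{lem_measurability_between_d_and_l}, one has $\coal=\varPhi(A\cap\DIL)=A\cap\DIL$ as subsets of $\CL$. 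For ``$\subseteq$'', any $z\in\coal$ has, by (G1), a version $y\in\DC$; since each $z_t\in\Li$ is non-decreasing and $y$ is \cdl, $y$ is non-decreasing in $u$, so $y\in\DI$. Moreover $y\in\DIL$: the uniform $L_{2+\delta}$-bound $D^1$ holds because on any $[\frac1T,T]$ the finitely many continuous particle trajectories stay bounded, so $y_t$ is uniformly bounded in $L_\infty$, and $D^2$ holds because $z\in\CL$ is $L_2$-continuous at $0$. Finally $y$ inherits (G2)--(G5) from $z$. The reverse inclusion is immediate. Granting this, Lemma~\ref{lem_measurability_between_d_and_l} yields $\coal=A\cap\DIL\in\B(\CL)$ as soon as $A\in\B(\DI)$, which is the remaining point.

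To prove $A\in\B(\DI)$ I use two facts. First, the coordinate evaluations $y\mapsto y_t(u)=y(u)(t)$ are Borel, because the Borel $\sigma$-algebra of $\DC$ is generated by the projections $y\mapsto y(u)$, composed with the continuous evaluation $\mC[0,\infty)\to\R$ at time $t$. Second, if $y^n\to y$ in $\DI$ then $y^n_t(u)\to y_t(u)$ at every continuity point $u$ of the monotone map $u\mapsto y_t(u)$, by \cite[Proposition~3.5.3]{Ethier:1986}; with Fatou this yields the closedness of the sets $\Lambda(t,f,K,p)$ recorded in~\eqref{equ_closability_of_norm_set}. Condition (G2) is then handled directly: $\{y:\,y_0=g\ \text{a.e.}\}=\Lambda(0,g,0,2)$ is closed, hence Borel. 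For the remaining conditions the recurring difficulty is that they are quantified over all $t>0$ (and all $u,v$), an \emph{uncountable} family; the whole point is to replace these by countably many rational parameters, using measurability of evaluations together with the continuity of $t\mapsto y_t(u)$ and the right-continuity and monotonicity of $u\mapsto y_t(u)$.

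For (G3)--(G5) I work with $N(y_t)$, which for fixed $t$ is Borel on $\DI$: since $u\mapsto y_t(u)$ is non-decreasing and \cdl, $\{N(y_t)\geq n+1\}=\bigcup\{y:\,y_t(u_1)<\cdots<y_t(u_{n+1})\}$, the union over rationals $u_1<\cdots<u_{n+1}$ of Borel sets. Moreover $t\mapsto N(y_t)$ is lower semicontinuous, since a strict ordering witnessed at finitely many points persists under $y_{t'}\to y_t$ as $t'\to t$; combined with monotonicity this also makes the \cdl property of $N$ automatic. With these tools, (G5) is encoded by countably many rational-time requirements: $N$ non-increasing ($N(y_s)\geq N(y_t)$ for rationals $s<t$), absence of jumps of height $\geq 2$ (every integer between $N(y_s)$ and $N(y_t)$ is attained at some rational time), and the eventual value $1$ ($N(y_t)=1$ for some rational $t$). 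Condition (G4) is encoded as the countable intersection over rationals $u<v$ and $s<t$ of the Borel sets $\{y:\,y_s(u)<y_s(v)\}\cup\{y:\,y_t(u)=y_t(v)\}$, the passage from real to rational parameters being justified by monotonicity in $u$ and continuity in $t$. Finally (G3) need only be imposed at rational times, $\bigcap_{t\in\Q,\,t>0}\{N(y_t)<\infty\}$: on the set where $N$ is non-increasing, finiteness at all rational times forces finiteness at every real $t>0$, since for rationals $s_n\downarrow t$ and any rational $q\in(0,t)$ one has $N(y_t)\leq\liminf_n N(y_{s_n})\leq N(y_q)<\infty$ by lower semicontinuity and monotonicity. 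Intersecting these countably many Borel sets gives $A\in\B(\DI)$.

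The main obstacle is precisely this uncountable quantification over time (and over labels $u,v$) in (G3)--(G5): a naive intersection over all $t$ is not a priori Borel. Overcoming it requires the combination of Borel measurability of the coordinate evaluations, the lower semicontinuity of $t\mapsto N(y_t)$, and the monotone \cdl structure in $u$, which together allow every quantifier to be restricted to a countable dense set of rational parameters.
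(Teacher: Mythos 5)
Your reduction is structurally the same as the paper's: identify $\coal$ with $A\cap\DIL$, where $A\subseteq\DI$ is the set of \cdl-in-$u$ functions satisfying (G2)--(G5), invoke Lemma~\ref{lem_measurability_between_d_and_l}, and then prove $A\in\B(\DI)$ by reducing quantifiers to countably many parameters (the paper leaves exactly this last step to the reader). Your treatment of (G2), the Borel measurability of $y\mapsto N(y_t)$ for fixed $t$, the lower semicontinuity of $t\mapsto N(y_t)$, and the deduction of (G3) at all real times are all correct. The gap is in your countable encodings of (G4) and of the monotonicity part of (G5): quantifying only over \emph{rational times}, and testing pointwise equality (resp.\ the values of $N$) at those times, is strictly weaker than the real-time conditions, because two particles can touch at an isolated \emph{irrational} instant and separate again --- an event invisible at rational times. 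Concretely, fix an irrational $s_0>0$ and a rational $T>s_0$, set $\phi(t)=|t-s_0|\,(T-t)^+$ and
\begin{equation*}
y_t(w)=\phi(t)\,\I_{[1/2,1)}(w),\qquad t\geq 0,\ w\in(0,1),
\end{equation*}
with initial condition $g=y_0=s_0T\,\I_{[1/2,1)}\in\ltp$. Then $y\in\DIL$, and $N(y_t)=2$ for $t\in[0,T)\setminus\{s_0\}$, while $N(y_{s_0})=1$ and $N(y_t)=1$ for $t\geq T$. Every one of your countable conditions holds: at rational times the two groups are equal only from $T$ onwards, after which they remain equal, so your (G4)-sets contain $y$; the restriction of $N(y_\cdot)$ to the rationals is non-increasing (first $2$, then $1$), skips no integer, and attains the value $1$ at the rational time $T$. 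Yet $y\notin\coal$: (G4) fails at $s=s_0$ (the groups coincide at $s_0$ and separate immediately afterwards), and $t\mapsto N(y_t)$ is not non-increasing. So the Borel set you construct strictly contains $A$, and the proof breaks down. Note that your lower semicontinuity observation cannot repair this: it bounds $N(y_t)$ from \emph{above} by nearby rational values, so it can never exclude a downward spike of $N$ (equivalently, a touching-without-coalescing event) at an irrational time.

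The repair is to detect meetings with a running minimum over whole time intervals rather than with equality at rational instants --- this is in effect what the paper's hint (``determined via values of $y(u)\in\mC[0,\infty)$ for $u$ from a countable set'') requires, since it keeps the full continuous paths in play. For rationals $u<v$ and rational $q$, replace your set by
\begin{equation*}
\Bigl\{y:\ \min_{t\in[0,q]}\bigl(y_t(v)-y_t(u)\bigr)>0\Bigr\}\cup\bigl\{y:\ y_q(u)=y_q(v)\bigr\},
\end{equation*}
which is Borel because $y\mapsto(y(u),y(v))$ is Borel from $\DI$ to $\mC[0,\infty)^2$ and the running minimum is a continuous functional there. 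The intersection over rational $u<v$ and rational $q$ captures (G4) for rational pairs exactly: if the pair meets at \emph{any} time $s\leq q$ (rational or not), the minimum vanishes, forcing equality at $q$; conversely, continuity in $t$ transfers equality at rationals $q_n\downarrow t$ to equality at $t$. One must then still extend (G4) from rational pairs to all pairs $(u,v)$, using (G3) (values of a step function are attained on intervals of positive length, hence at rational labels) together with right-continuity in $u$; only after real-time (G4) is available does $N$ become genuinely non-increasing, which is what legitimizes your encodings of the remaining parts of (G5) (unit jumps via intermediate integers attained at rational times, eventual value $1$ at a rational time). These extension steps, especially at $s=0$ when $g$ is not a step function, require care and are presumably why the paper declines to write them out.
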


\begin{proof} 
  Let $\coal_D$ consists of all functions from $\DI$ which satisfies conditions (G2)-(G5) of the definition of $\coal$ in Section~\ref{parag:mmaf_coal}. Since every function $f \in \St$ has a finite $L_p$-norm for every $p\geq 2$, it is easily seen that 
  \[
    \coal_D \cap \DIL =\coal.
  \]
  Hence, according to Lemma~\ref{lem_measurability_between_d_and_l}, the statement of the lemma will immediately follow from the measurability of $\coal_D$ in $\DI$. However, this follows from the fact that the set $\coal_D$ can be determined via values of $y(u) \in \mC[0,\infty)$ for $u$ from a countable set $U$. We leave a detailed proof for the reader. 
\qed
\end{proof}

\subsection{Properties of MMAF}

Let $\{\Y(u,t),\ u \in (0,1),\ t \in [0,\infty)\}$ be a MMAF starting at $g \in \ltp$, and $\Y_t= \Y(\cdot, t)$, $t\geq 0$. 

\begin{lemma} 
  \label{lem_finiteness_of_l2_norm_of_mmaf}
  If $\|g\|_{L_{2+\eps}}<\infty$ for some $\eps>0$, then for every $T>0$ and $\delta \in \left( 0,\frac{ \eps }{ 2+\eps } \right)$ there exists $C_{T,\delta}$ such that
  \[
    \E{ \sup\limits_{ t \in [0,T] }\|\Y_t-g\|_{L_{2+\delta}}^{2+\delta} }\leq C_{T,\delta}\left(1+\|g\|_{L_{2+\eps}}\right).
  \]
\end{lemma}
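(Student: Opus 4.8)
The plan is to reduce the space--time supremum to a one-particle martingale estimate and then to control the resulting time-change clock. First I would observe that, for each fixed label $u$, the process $M(u,t):=\Y(u,t)-g(u)$ is by (E1)--(E2) a continuous martingale started at $0$ whose quadratic variation is the $u=v$ case of (E4), namely $\langle M(u,\cdot)\rangle_t=\int_0^t m(u,s)^{-1}\,\mathrm ds$. Writing $\|\Y_t-g\|_{L_{2+\delta}}^{2+\delta}=\int_0^1|M(u,t)|^{2+\delta}\,\mathrm du$ and using $\sup_t\int_0^1\leq\int_0^1\sup_t$ together with Tonelli, it suffices to bound $\int_0^1\E{\sup_{t\leq T}|M(u,t)|^{2+\delta}}\,\mathrm du$. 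Applying the Burkholder--Davis--Gundy inequality with exponent $p=2+\delta\geq 2$ to each $M(u,\cdot)$ turns this into
\[
\int_0^1\E{\Big(\int_0^T \frac{\mathrm ds}{m(u,s)}\Big)^{1+\delta/2}}\,\mathrm du,
\]
so the whole statement is reduced to a moment bound on the reciprocal-mass clock.

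Next I would linearise the clock moment. Since $x\mapsto x^{1+\delta/2}$ is convex, Jensen's inequality applied to the normalised time integral gives $\big(\int_0^T m(u,s)^{-1}\mathrm ds\big)^{1+\delta/2}\leq T^{\delta/2}\int_0^T m(u,s)^{-(1+\delta/2)}\mathrm ds$. The gain is that the $u$-integral can now be computed exactly: at each time $s$ there are, by (G3) and (G5), finitely many clusters with masses $m_1(s),\dots,m_{N(\Y_s)}(s)$ summing to $1$, and $m(u,s)$ is constant equal to $m_i(s)$ on the $i$-th cluster of labels, whence
\[
\int_0^1 m(u,s)^{-(1+\delta/2)}\,\mathrm du=\sum_{i} m_i(s)\, m_i(s)^{-(1+\delta/2)}=\sum_i m_i(s)^{-\delta/2}.
\]
Thus the proof is complete once one shows $\E{\int_0^T\sum_i m_i(s)^{-\delta/2}\,\mathrm ds}\leq C_{T,\delta}(1+\|g\|_{L_{2+\eps}})$. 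For the backbone $L_2$-estimate, i.e. $\delta=0$, this weight degenerates to $\sum_i 1=N(\Y_s)$, and the bound is exactly the trace/energy identity $\E{\|\Y_t-g\|_{L_2}^2}=\E{\int_0^t N(\Y_s)\,\mathrm ds}$ obtained by summing the quadratic variations of $(\Y_t,h_j)_{L_2}$ over an orthonormal basis and using $\operatorname{tr}\pr_{\Y_s}=N(\Y_s)$; this is the model to follow for $\delta>0$.

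The hard part will be this last estimate, because the weight $\sum_i m_i(s)^{-\delta/2}$ is singular as $s\downarrow 0$: for continuum initial data $m(u,0)=0$, so both $N(\Y_s)$ and the tiny-cluster contributions blow up near $s=0$, and the whole point is that they blow up slowly enough to be integrable in time and to be dominated by a \emph{single} power of $\|g\|_{L_{2+\eps}}$. I would establish this through the finite-particle approximation used to construct the MMAF: for the $n$-particle system the masses are bounded below by $1/n$, the analogue of the above quantity is finite, and before its first coalescence each cluster evolves as a Brownian motion whose speed is controlled by its increasing mass, so that its displacement from $g$ and the rate at which neighbouring clusters meet are governed by the local spacing of $g$. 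This is where the $(2+\eps)$-integrability of $g$ enters and produces the bound uniformly in $n$, and the restriction $\delta<\tfrac{\eps}{2+\eps}$ is precisely what makes the small-time integral $\int_0^T\sum_i m_i(s)^{-\delta/2}\,\mathrm ds$ converge while remaining controlled by $\|g\|_{L_{2+\eps}}$. Passing to the limit $n\to\infty$ by Fatou's lemma then yields the required estimate for the MMAF, and combining it with the reduction of the first two paragraphs gives the lemma. Since uniform-in-$n$ moment bounds of exactly this type are produced in the construction of~\cite{Konarovskyi:EJP:2017}, in practice I would invoke them rather than re-derive the reciprocal-mass control from scratch.
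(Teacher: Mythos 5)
Your reduction in the first two paragraphs is correct, and it is the natural skeleton: setting $M(u,t)=\Y(u,t)-g(u)$, using (E4) with $u=v$, exchanging $\sup_t$ and $\int_0^1$, applying Burkholder--Davis--Gundy with exponent $2+\delta$, Jensen in time, and the cluster identity $\int_0^1 m(u,s)^{-(1+\delta/2)}\,\mathrm du=\sum_i m_i(s)^{-\delta/2}$ are all valid and use only (E1)--(E4) together with (G3)--(G4). The gap is that after this reduction nothing of substance has been proved: the bound $\E{\int_0^T\sum_i m_i(s)^{-\delta/2}\,\mathrm ds}\leq C_{T,\delta}\left(1+\|g\|_{L_{2+\eps}}\right)$ \emph{is} the whole content of the lemma, including the trade-off $\delta<\tfrac{\eps}{2+\eps}$ and the linear dependence on $\|g\|_{L_{2+\eps}}$, and your proposal does not prove it. Note that this is a moment of order $1+\delta/2>1$ of the reciprocal mass, strictly stronger than the first-moment estimate $\E{N(\Y_s)}\leq C s^{-1/2}\left(1+\|g\|_{L_{2+\eps}}\right)$ that this paper quotes from \cite[Remark~4.6]{Konarovskyi:EJP:2017}; it requires tail control of small clusters (the probability that \emph{both} labels $u\pm x$ have avoided coalescing with $u$ by time $s$), and the closing assertion that ``uniform-in-$n$ moment bounds of exactly this type are produced in \cite{Konarovskyi:EJP:2017}'' is precisely the unverified heavy lifting.

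There is also a structural flaw in the route you propose for that missing step. The lemma, as stated and as it is used in this paper (it yields property (M1) in Lemma~\ref{lemma:equiv_def_mmaf}), must hold for \emph{every} random element satisfying (E1)--(E4) of Definition~\ref{def:mmaf}. By Remark~\ref{rem_uniqueness_of_distribution}, uniqueness in law of such a process is unknown for non-step $g$, so an estimate established for the particular MMAF constructed in \cite{Konarovskyi:EJP:2017} as the limit of $n$-particle systems does not transfer to an arbitrary MMAF starting at $g$; your ``finite-particle approximation $+$ Fatou'' argument would prove the inequality only for that constructed version. This is exactly why the paper's own one-line proof instructs to \emph{repeat} the proof of \cite[Proposition~4.4]{Konarovskyi:EJP:2017} ``replacing the summation with the integration'': the discrete argument is to be re-run intrinsically on the continuum object, with labels $u\in(0,1)$ in place of particle indices, using only the martingale and coalescence properties, so that it applies to any MMAF. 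Your steps 1--5 are intrinsic in precisely this sense; to close the gap you would need to derive the reciprocal-mass (equivalently, non-coalescence) estimates intrinsically from (E1)--(E4) as well, rather than through an approximation whose relevance to the given $\Y$ cannot be justified.
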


\begin{proof} 
  In order to check the estimate, one needs to repeat the proof of \cite[Proposition~4.4]{Konarovskyi:EJP:2017} replacing the summation with the integration. 
\qed
\end{proof}

Recall that for every $f \in \St$, $N(f)$ denotes the number of steps of $f$. We write $N(f)= \infty$ for each non-decreasing \cdl function $f$ which does not belong to $\St$.
For any $y \in \mC([0,\infty),\Li)$, define 
  \[
    \tau^{y}_k= \inf \{ t\geq 0 : N(y_t) \leq k  \},\quad k\geq 0.
  \]

The following lemma states that a MMAF satisfies almost surely Property (G5) of Definition~\ref{definition_coal}.

\begin{lemma} 
  \label{lem_property_of_ny}
  Let $\Y_t$, $t \geq 0$, be a MMAF starting at $g$. Then
  \begin{equation} 
  \label{equ_different_times}
  \P{ \forall k< N(g), \tau_{k+1}^{\Y} < \tau_k^{\Y} }=1\quad\mbox{and}\quad \P{ \tau^{\Y}_1< +\infty }=1.
\end{equation}
\end{lemma}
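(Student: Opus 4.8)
The plan is to reduce both assertions to the finite system of ordered clusters present at a fixed positive time, and to exploit the quadratic variation structure (E4). Recall that by (G3)--(G4) (established in \cite{Konarovskyi:EJP:2017}) the map $t\mapsto N(\Y_t)$ is finite for every $t>0$ and non-increasing, so for every $t_0>0$ the flow on $[t_0,\infty)$ consists of $m:=N(\Y_{t_0})<\infty$ ordered clusters $\y_1<\dots<\y_m$ whose masses $\mu_1,\dots,\mu_m$ sum to $1$ and hence satisfy $\mu_i\le 1$. A key elementary input, used repeatedly, is that for two labels $u,v$ lying in distinct clusters the cross quadratic variation in (E4) vanishes, so the difference $\Delta_t:=\Y(v,t)-\Y(u,t)$ is a nonnegative continuous martingale whose variation grows at rate $m(u,s)^{-1}+m(v,s)^{-1}\ge 2$ as long as $u,v$ have not coalesced.

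First I would prove $\tau_1^\Y<\infty$. Since $N(\Y_\cdot)$ is non-increasing and integer-valued, on $\{\tau_1^\Y=\infty\}$ it stabilizes after some finite time $T$ at a value $\ell\ge 2$; thereafter the $\ell$ clusters never meet and their masses stay fixed. Taking $u,v$ in two adjacent such clusters, the bound above gives $\langle\Delta\rangle_t-\langle\Delta\rangle_T\ge 2(t-T)\to\infty$. By the Dambis--Dubins--Schwarz representation $\Delta$ is then a time-changed Brownian motion with diverging clock, which must reach $0$ almost surely; this forces these two clusters to coalesce, contradicting the stabilization. Hence $\P{\tau_1^\Y=\infty}=0$.

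Next comes the strict monotonicity $\tau_{k+1}^\Y<\tau_k^\Y$, which is exactly the statement that every downward jump of $t\mapsto N(\Y_t)$ has height one. Since every collision occurs at a positive time, it suffices (taking the union over $t_0=1/n$) to show that, conditionally on $\F_{t_0}^\Y$, all collisions in $[t_0,\infty)$ are simple. Setting $G_i:=\y_{i+1}-\y_i$ for $i\in[m-1]$, the fact that before the first collision all masses are constant and all cross variations between distinct clusters vanish shows, via (E4), that the $\R^{m-1}$-valued martingale $G=(G_1,\dots,G_{m-1})$ has constant covariation $\langle G_i,G_j\rangle_t=\Sigma_{ij}(t-t_0)$, where $\Sigma$ is tridiagonal with $\Sigma_{ii}=\mu_i^{-1}+\mu_{i+1}^{-1}$ and $\Sigma_{i,i+1}=\Sigma_{i+1,i}=-\mu_{i+1}^{-1}$. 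This $\Sigma$ is positive definite, being the per-unit-time covariance of the increments of independent scaled coordinates. By Lévy's characterization (applied to the stopped martingale, whose linear clock makes the time change trivial) $G$ coincides up to the first collision with $G(t_0)+\Sigma^{1/2}B$ for a standard $(m-1)$-dimensional Brownian motion $B$, so the first collision is the first exit of this nondegenerate Brownian motion from the open orthant $(0,\infty)^{m-1}$. A simultaneous double collision would require $G$ to hit a set $\{x_i=x_j=0\}$ with $i\neq j$, of codimension two, which is polar for $\Sigma^{1/2}B$; hence exactly one gap closes. Applying the same computation on each successive inter-collision interval, conditioning on the configuration just after the previous collision (legitimate since the law of the MMAF started from a step function is uniquely determined, see Remark~\ref{rem_uniqueness_of_distribution}, so the strong Markov property is available), one gets that all at most $m-1$ collisions after $t_0$ are simple.

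The main obstacle is this second part: one must rigorously justify that, up to the first collision, the gap vector is genuinely a \emph{nondegenerate} Brownian motion — this is precisely where the exact form of (E4), positive definiteness of $\Sigma$, and Lévy's theorem are indispensable — so that polarity of codimension-two sets can be invoked, and one must then propagate the argument across successive collisions through the Markov structure. By contrast, the first part ($\tau_1^\Y<\infty$) is routine, resting only on recurrence of one-dimensional time-changed Brownian motion.
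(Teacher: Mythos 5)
Your proposal is correct and takes essentially the same route as the paper's own (one-line) proof: the paper's cited fact that three or more independent Brownian motions cannot meet at the same time is precisely your L\'evy-characterization-plus-polarity argument for the nondegenerate gap vector, and the paper's appeal to ``mathematical induction'' corresponds to your iteration through successive collisions. Your recurrence argument giving $\P{\tau_1^{\Y}<+\infty}=1$ and the codimension-two polarity argument simply supply, in full detail, exactly what the paper leaves implicit.
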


\begin{proof} 
The proof of the statement follows from the fact that with probability one, three or more independent Brownian motions cannot meet at the same time, and from mathematical induction. 
\qed
\end{proof}

\begin{lemma} 
  \label{lem_tau_and_n}
  For every $y \in \coal$, $\beta>0$ and $n\geq 1$ one has 
  \[
    \sum_{ k=n }^{ \infty } \left( \tau_k^y \right)^{\beta}=\beta \int_{ 0 }^{ \tau_n^y } (N(y_t)-n) t^{\beta-1} \mathrm dt.
  \]
\end{lemma}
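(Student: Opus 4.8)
The plan is to rewrite the integrand by a layer-cake decomposition of $N(y_t)-n$ in terms of the indicators $\I_{\{t<\tau_k^y\}}$ and then to interchange the sum and the integral by Tonelli's theorem.

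First I would record the elementary relation between $N$ and the coalescence times. Since $y\in\coal$, Property (G5) guarantees that $t\mapsto N(y_t)$ is non-increasing, right-continuous and integer-valued. Together with the definition $\tau_k^y=\inf\{t\geq 0:\ N(y_t)\leq k\}$, this yields that each super-level set is a half-open interval,
\[
\{t\geq 0:\ N(y_t)>k\}=[0,\tau_k^y),\qquad k\geq 0,
\]
where the inclusion "$\subseteq$" uses that $\tau_k^y$ is the infimum (so $N(y_s)>k$ for all $s<\tau_k^y$), and the reverse inclusion uses right-continuity of $N$ at $\tau_k^y$ together with monotonicity. In particular $k\mapsto\tau_k^y$ is non-increasing, so $\tau_k^y\leq\tau_n^y$ for all $k\geq n$, and for $n\geq 1$ one has $\tau_n^y\leq\tau_1^y<\infty$ by (G5), so the integral is over a bounded interval.

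Next, for each fixed $t\in(0,\tau_n^y)$, which forces $N(y_t)>n$ and $N(y_t)<\infty$, I would write the finite sum
\[
N(y_t)-n=\sum_{k=n}^\infty \I_{\{N(y_t)>k\}}=\sum_{k=n}^\infty \I_{\{t<\tau_k^y\}},
\]
the second equality being precisely the identity of the previous step. Substituting this into the right-hand side of the claimed formula and applying Tonelli's theorem — legitimate because every summand and the factor $t^{\beta-1}$ are non-negative — permits exchanging the integral and the sum:
\[
\beta\int_0^{\tau_n^y}(N(y_t)-n)\,t^{\beta-1}\,\mathrm dt
=\sum_{k=n}^\infty \beta\int_0^{\tau_n^y}\I_{\{t<\tau_k^y\}}\,t^{\beta-1}\,\mathrm dt.
\]

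Finally, since $\tau_k^y\leq\tau_n^y$ for $k\geq n$, the $k$-th integral collapses to $\beta\int_0^{\tau_k^y}t^{\beta-1}\,\mathrm dt=(\tau_k^y)^\beta$, and summing over $k\geq n$ gives exactly $\sum_{k=n}^\infty(\tau_k^y)^\beta$, as required; I would note that the identity is understood as an equality in $[0,+\infty]$, both sides being possibly infinite simultaneously. The only delicate point is the very first step, the identification $\{N(y_t)>k\}=[0,\tau_k^y)$, where the right-continuity and the height-one jumps supplied by (G5) must be invoked carefully; everything after that is a routine application of Tonelli's theorem.
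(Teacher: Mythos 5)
Your proof is correct, but it takes a genuinely different route from the paper's. The paper proceeds by Abel summation: for $m>n$ it rewrites the partial sum $\sum_{k=n}^{m}(\tau_k^y)^\beta$ as $\sum_{k=n+1}^{m}(k-n)\bigl((\tau_{k-1}^y)^\beta-(\tau_k^y)^\beta\bigr)+(m+1-n)(\tau_m^y)^\beta$, identifies $k=N(y_{\tau_k^y})$ --- which uses the height-one jumps and right-continuity from (G5) together with the strict decrease of the $\tau_k^y$ --- recognizes the result as the Stieltjes integral $\beta\int_0^{\tau_n^y}\bigl(N(y_{t\vee\tau_{m+1}^y})-n\bigr)t^{\beta-1}\,\mathrm dt$, and finally lets $m\to\infty$ by monotone convergence. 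You instead use the layer-cake identity $N(y_t)-n=\sum_{k\geq n}\I_{\{N(y_t)>k\}}=\sum_{k\geq n}\I_{\{t<\tau_k^y\}}$ and a single application of Tonelli's theorem, with each summand collapsing to $(\tau_k^y)^\beta$. Your argument buys two things: it dispenses with the partial-sum manipulation and the limiting step (Tonelli handles a possibly infinite value on both sides at once), and it is slightly more general --- the set identity $\{t:\ N(y_t)>k\}=[0,\tau_k^y)$ needs only monotonicity and right-continuity of $t\mapsto N(y_t)$ plus finiteness for $t>0$, not the height-one jump property, so your proof would survive simultaneous coalescences, where the paper's identification $N(y_{\tau_k^y})=k$ would fail. (Indeed, contrary to your closing remark, the height-one jumps are never actually used in your argument.) The paper's approach, in exchange, makes the mechanism transparent as a discrete integration by parts. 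One cosmetic slip: in justifying the set identity you swapped the labels of the two inclusions --- the infimum property gives $[0,\tau_k^y)\subseteq\{N(y_\cdot)>k\}$, while right-continuity plus monotonicity gives the converse --- but both ingredients are correctly identified, so the proof stands.
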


\begin{proof} 
  For simplicity of notation we will omit the superscript $y$ in $\tau_k^y$.  We write for $m>n$
  \begin{align*}
    \sum_{ k=n }^{ m } \tau_{k}^{\beta}&= \sum_{ k=n+1 }^{ m } (k-n)\left( \tau_{k-1}^{\beta}-\tau_k^{\beta} \right)+(m+1-n)\tau_m^{\beta}\\
    &= \sum_{ k=n+1 }^{ m } (N(y_{\tau_k})-n)\left( \tau_{k-1}^{\beta}-\tau_k^{\beta} \right)+(N(y_{\tau_{m+1}})-n)\tau_m^{\beta}\\
    &= \int_{ 0 }^{ \tau_n } \left( N(y_{t\vee\tau_{m+1}})-n \right)\mathrm dt^{\beta}= \beta\int_{ 0 }^{ \tau_n } \left( N(y_{t\vee\tau_{m+1}})-n \right)t^{\beta-1}\mathrm dt,
  \end{align*}
Hence, the statement of the lemma follows from the monotone convergence theorem.
\qed
\end{proof}

\begin{lemma}
\label{lem:stopping_time_mmaf}
Let $\Y$ be a MMAF starting at $g \in \ltp$. Then for every $\beta> \frac{1}{ 2 }$,  $\sum_{k=1}^\infty (\tau_k^\Y)^\beta< +\infty$ almost surely. 
\end{lemma}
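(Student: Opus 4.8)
The plan is to start from Lemma~\ref{lem_tau_and_n} with $n=1$, which rewrites the series as an integral:
\[
\sum_{k=1}^\infty(\tau_k^\Y)^\beta=\beta\int_0^{\tau_1^\Y}\bigl(N(\Y_t)-1\bigr)t^{\beta-1}\,\mathrm dt .
\]
Since $\tau_1^\Y<\infty$ almost surely (Lemma~\ref{lem_property_of_ny}) and, for $t\in[1,\tau_1^\Y]$, the monotonicity of $t\mapsto N(\Y_t)$ together with $\Y_1\in\St$ (Property (G3)) gives $N(\Y_t)\le N(\Y_1)<\infty$, the contribution of the interval $[1,\tau_1^\Y]$ is a bounded integrand over a finite interval, hence finite almost surely. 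Thus the only issue is the integrability of $N(\Y_t)t^{\beta-1}$ near $t=0$, and it is precisely here that the assumption $\beta>\frac12$ enters. I emphasise that one should \emph{not} try to bound $\E{\sum_k(\tau_k^\Y)^\beta}$ directly: the first meeting time $\tau_1^\Y$ has a heavy tail (its survival function decays only like $t^{-1/2}$), so already $\E{(\tau_1^\Y)^\beta}=\infty$ for $\beta\ge\frac12$; the tail in $t$ must be handled pathwise as above, and only the small-$t$ part is treated in expectation.

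For the remaining part it suffices to show $\int_0^1(N(\Y_t)-1)t^{\beta-1}\,\mathrm dt<\infty$ almost surely, which by Tonelli follows from $\int_0^1\E{N(\Y_t)}\,t^{\beta-1}\,\mathrm dt<\infty$. The key analytic input is the moment estimate $\E{N(\Y_t)}\le C_g\,t^{-1/2}$ on $(0,1]$, because $\int_0^1 t^{-1/2}t^{\beta-1}\,\mathrm dt=\int_0^1 t^{\beta-3/2}\,\mathrm dt$ converges exactly when $\beta>\frac12$. To obtain it I would use the identity $N(\Y_t)=\int_0^1\frac{\mathrm du}{m(u,t)}$ (valid since $\Y_t\in\St$ has finitely many clusters, each of mass $m(u,t)$) together with the following pairwise bound. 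For $u<v$, set $D:=\Y(v,\cdot)-\Y(u,\cdot)\ge0$; by (E4) the cross-variation vanishes before $\tau_{u,v}$, and since $m(u,s)\le1$ the martingale $D$ has quadratic variation of rate at least $2$ up to $\tau_{u,v}$. A Dubins--Schwarz time change together with the reflection estimate for Brownian first passage then yields
\[
\P{\tau_{u,v}>t}\le\frac{g(v)-g(u)}{\sqrt{\pi t}} .
\]
Counting the cluster boundaries of $\Y_t$ along a refining sequence of partitions of $(0,1)$ and using monotone convergence gives $\E{N(\Y_t)}\le 1+\frac{g(1^-)-g(0^+)}{\sqrt{\pi t}}$, which is the desired bound whenever $g$ is bounded. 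Equivalently, one may phrase everything through the identity $\E{\|\Y_t-g\|_{L_2}^2}=\int_0^t\E{N(\Y_s)}\,\mathrm ds$ and the pathwise inequality $N(\Y_t)\le t^{-1}\int_0^1\langle\Y(u,\cdot),\Y(u,\cdot)\rangle_t\,\mathrm du$, which follows from the monotonicity of $s\mapsto m(u,s)$.

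The main obstacle is exactly the general case $g\in\ltp$ with $g$ possibly unbounded, where $g(1^-)-g(0^+)$ may be infinite and the pairwise bound above becomes vacuous. The point is that the rate-$2$ lower bound for the quadratic variation of $D$ is far too crude for the MMAF: near the ends, where particles carry little mass, the diffusion rate $1/m(u,s)$ is large, so light particles coalesce much faster than standard Brownian particles, and this is what keeps $\E{N(\Y_t)}$ finite (indeed of order $t^{-1/2}$, or better) under the mere integrability $\int_0^1|g|^{2+\eps}\,\mathrm du<\infty$. I would therefore sharpen the coalescence-time estimate by feeding the true mass-dependent rate $\frac1{m(u,s)}+\frac1{m(v,s)}$ into the time change and balancing it against the $L_{2+\eps}$-integrability of $g$; more economically, one may invoke the moment estimates for the MMAF already established in the cited works, along the lines of the proof of Lemma~\ref{lem_finiteness_of_l2_norm_of_mmaf}. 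Once $\E{N(\Y_t)}\le C_g\,t^{-1/2}$ is available on $(0,1]$, combining it with the pathwise treatment of $[1,\tau_1^\Y]$ completes the proof for every $\beta>\frac12$.
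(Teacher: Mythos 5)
Your proposal follows essentially the same route as the paper: rewrite the series via Lemma~\ref{lem_tau_and_n}, control the small-time part in expectation using the moment estimate $\E{N(\Y_t)} \leq C_g\, t^{-1/2}$ (which the paper simply cites from \cite[Remark~4.6]{Konarovskyi:EJP:2017}), and handle the remaining time interval pathwise via $\tau_1^\Y<+\infty$ almost surely. Your supplementary derivation of that moment estimate via pairwise coalescence times is valid only for bounded $g$, as you yourself note, but your fallback for general $g\in\ltp$ --- invoking the moment estimates already established in the cited works --- is exactly the paper's move, so the argument is sound and not genuinely different.
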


\begin{proof}
  Let $g \in L_{2+\eps}$ for some $\eps>0$. In order to prove the lemma, we will use the estimate 
  \[
    \E{ N(\Y_t)}\leq \frac{ C_{\eps,T} }{ \sqrt{ t } }\left( 1+\|g\|_{L_{2+\eps}} \right), \quad t \in (0,T],
  \]
  from~\cite[Remark~4.6]{Konarovskyi:EJP:2017}, where $C_{\eps,T}$ is a constant depending on $\eps$ and $T>0$. Take an arbitrary number $T>0$ and estimate for $\beta> \frac{1}{ 2 }$ 
\begin{align*}
  \E{ \int_{ 0 }^{ \tau_1^{\Y}\wedge T } N(\Y_t)t^{\beta-1} \mathrm dt}&\leq  \int_{ 0 }^{  T } \E{N(\Y_t)}t^{\beta-1} \mathrm dt\\
  &\leq C_{\eps,T}\left( 1+\|g\|_{L_{2+\eps}} \right) \int_{ 0 }^{ T } t^{\beta- \frac{3}{ 2 }}\mathrm dt < +\infty.
\end{align*}
Thus $\int_{ 0 }^{ \tau_1^{\Y} \wedge T } N(\Y_t)t^{\beta-1} \mathrm dt<\infty$ almost surely, for all $T>0$. 
Since  $\tau_1^{\Y}< \infty$ almost surely by~\eqref{equ_different_times},  $\int_{ 0 }^{ \tau_1^{\Y} } N(\Y_t)t^{\beta-1} \mathrm dt<\infty$ almost surely.
Thus, the statement of the lemma follows directly from Lemma~\ref{lem_tau_and_n}.
\qed
\end{proof}

\subsection{Uniqueness of solutions to deterministic equation}
\label{sub:uniqueness_of_solutions_to_deterministic_equation}

Denote $\bE^g:=\left\{x \in \mC\left([0,\infty),L_2(g)\right):\ x_0=g\right\}$ and
\[
\coalex:= \left\{ x \in \bE^{g}:\
    \begin{array}{l}
      \forall u,v \in (0,1)\ \forall s\geq 0,\ x_s(u)=x_s(v)\\
	   \mbox{implies}\ x_t(u)=x_t(v),\ \forall t\geq s 
    \end{array}\right\}.
\]
Let $g \in \St$ be fixed and $N(g)=n$. For functions $x \in \mC\left([0,\infty),L_2(g)\right)$ and $y \in \coalex$, we introduce the integral
\begin{equation} 
  \label{equ_definition_of_the_integral}
  \int_{ 0 }^{ t } \pr_{y_s} \mathrm dx_s= \sum_{ k=1 }^{ n } \bigg( \pr_{y_{\tau_{k}^y\wedge t}}x_{\tau_{k-1}^y\wedge t} -\pr_{y_{\tau_{k}^y\wedge t}}x_{\tau_{k}^y\wedge t} \bigg). 
\end{equation}

\begin{lemma} 
  \label{lem_solution_to_deterministic_equation}
  For every $x \in \mC\left([0,\infty),L_2(g)\right)$, there exists a unique $y \in \coalex$ such that 
  \begin{equation} 
  \label{equ_deterministic_equation}
    y_t=g+\int_{ 0 }^{ t } \pr_{y_s}\mathrm dx_s, \quad t\geq 0. 
  \end{equation}
\end{lemma}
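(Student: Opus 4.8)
The plan is to exploit the finite dimensionality of $L_2(g)$ and the fact that at most $n-1$ coalescences can occur, in order to construct the solution explicitly, interval by interval, and to read off uniqueness from the same recursive structure.

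First I would record the algebraic simplification of the integral~\eqref{equ_definition_of_the_integral}: for $y \in \coalex$ and $t \in [\tau_j^y, \tau_{j-1}^y)$, so that $N(y_t)=j$, the telescoping sum collapses, because $\tau_k^y \wedge t$ equals $\tau_k^y$ for $k>j$ and equals $t$ for $k<j$ (whence those terms vanish), to
\[
\int_0^t \pr_{y_s}\mathrm dx_s = \pr_{y_{\tau_j^y}}(x_t - x_{\tau_j^y}) + \sum_{k=j+1}^n \pr_{y_{\tau_k^y}}(x_{\tau_{k-1}^y} - x_{\tau_k^y}).
\]
Evaluating at $t=\tau_j^y$ shows the sum equals $y_{\tau_j^y}-g$, so equation~\eqref{equ_deterministic_equation} is equivalent to the recursion
\[
y_t = y_{\tau_j^y} + \pr_{y_{\tau_j^y}}(x_t - x_{\tau_j^y}), \qquad t \in [\tau_j^y, \tau_{j-1}^y).
\]
In words, on each inter-coalescence interval the projection is frozen at the configuration attained just after the previous coalescence.

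For existence I would build $y$ forward from this recursion. Starting from $y_0=g$, where $\pr_{y_0}=\id_{L_2(g)}$, set $y_t = g + (x_t - x_0)$ and let $s_1$ be the first time two adjacent step values meet; by continuity the coordinates stay ordered up to $s_1$, so $y \in L_2^{\uparrow}(g)$ there. Having defined $y$ on $[0,s_i]$ with frozen projection $\pr_{y_{s_i}}$ onto $L_2(y_{s_i})$, continue by $y_t = y_{s_i} + \pr_{y_{s_i}}(x_t - x_{s_i})$ until the next meeting time $s_{i+1}$; since $\pr_{y_{s_i}}$ replaces each merged block by its mass-weighted average, adjacent block values move continuously and cannot cross without first coinciding, so ordering is preserved and at $s_{i+1}$ the meeting blocks are merged. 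Each step strictly decreases the number of blocks, so at most $n-1$ coalescences occur; if a meeting time is $+\infty$ the construction continues with the frozen projection forever. The resulting $y$ is continuous, lies in $L_2^{\uparrow}(g)$ with $y_0=g$, and has the coalescing property because two coordinates share a value exactly when they lie in the same block; hence $y \in \coalex$ and, by the displayed recursion, solves~\eqref{equ_deterministic_equation}. Uniqueness follows from the same recursion read as a rigidity statement: if $y,y'$ both solve, then on $[0,\tau_{n-1}^y)$ both are forced to equal $g+(x_t-x_0)$, so they coincide there and their first meeting time and configuration agree; and assuming $y=y'$ on $[0,s_i]$ with $y_{s_i}=y'_{s_i}$, the frozen projection is determined by the common configuration, forcing $y=y'$ on the next interval and identifying the next coalescence time. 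After finitely many steps this gives $y=y'$ on $[0,\infty)$.

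The main obstacle is the bookkeeping of the coalescence structure rather than any deep analytic difficulty: one must justify that the frozen-projection motion stays monotone so that $y$ genuinely belongs to $\coalex$, handle the possibility of several simultaneous coalescences (which correspond to degenerate intervals $\tau_j^y=\tau_{j-1}^y$ contributing nothing to the sum, so the argument is cleanest when phrased over the distinct meeting times $s_0<s_1<\cdots$), and confirm that the number of coalescences is finite so that the recursion terminates.
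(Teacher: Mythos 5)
Your proposal is correct and takes essentially the same route as the paper: the paper also builds the solution step by step, freezing the projection $\pr_{\tilde{y}^{k-1}_{\sigma_{k-1}}}$ on each interval between successive dimension-drop (coalescence) times $\sigma_k$, and reads off uniqueness from this recursive construction. Your telescoping reduction of the integral~\eqref{equ_definition_of_the_integral} to the frozen-projection recursion and your explicit induction for uniqueness merely spell out details that the paper's proof asserts ``by construction''.
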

 
\begin{proof} 
  Without loss of generality, we assume that $x_0=g$. The function $y \in \coalex$ can be constructed step by step. First take $\sigma_0=0$ and $\tilde{y}^0_t=g$, $t\geq 0$. Then set
  \[
    \tilde{y}^{k}_t:=\tilde{y}^{k-1}_{\sigma_{k-1}}+\pr_{\tilde{y}^{k-1}_{\sigma_{k-1}}}x_t-\pr_{\tilde{y}^{k-1}_{\sigma_{k-1}}}x_{\sigma_{k-1}}, \quad t\geq \sigma_{k-1},
  \]
  and 
  \[
    \sigma_k:=\inf\left\{ t>\sigma_{k-1}:\ \dim L_2(\tilde{y}^k_t)< \dim L_2(\tilde{y}^{k-1}_{\sigma_{k-1}}) \right\}
  \]
  for all $k \in [n-1]$.  Remark that $\dim L_2(\tilde{y}^k_{\sigma_k}) \in [n-k]$ for each $0\leq k \leq n-1$. We set $\sigma_n=+\infty$. 
  The function $y$ can be defined as 
    \[
      y_t=\tilde{y}^{k}_t \quad \mbox{for}\  t \in [\sigma_{k-1},\sigma_k),\quad k \in [n].
    \]
    By construction, $y$ belongs to $\coalex$, satisfies~\eqref{equ_deterministic_equation} and is uniquely determined.
\qed
\end{proof}

\begin{corollary} 
  \label{cor_existence_of_solution_to_the_equation}
  Let $W$ be a cylindrical Wiener process in $L_2(g)$ starting at $g$. Then there exists a unique $(\F^W_t)$-adapted process $Y_t$, $t\geq 0$, such that 
  \[
    Y_t=g+\int_{ 0 }^{ t } \pr_{Y_s}\mathrm dW_s, \quad Y_t \in L_2^{\uparrow}, \quad t\geq 0, 
  \]
  where $(\F^W_t)_{t\geq 0}$ is the filtration generated by $W$.
\end{corollary}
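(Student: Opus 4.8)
The plan is to transfer the pathwise well-posedness of Lemma~\ref{lem_solution_to_deterministic_equation} to the stochastic level, exploiting that here $L_2(g)$ is finite-dimensional (of dimension $n=N(g)$), so that $W$ is a genuine $\mC([0,\infty),L_2(g))$-valued process and each $\pr_{Y_s}$ is a bounded, self-adjoint projection on the $n$-dimensional space $L_2(g)$. For existence I would simply set $Y:=S(W)$, where $S$ is the measurable solution map introduced just before the proof of Lemma~\ref{lem_solution_to_deterministic_equation}. By that lemma $Y\in\coalex$ almost surely. Moreover, the step-by-step construction of $S$ runs through the times $\sigma_k$ at which two coordinates merge, which are $(\F^W_t)$-stopping times, and on each interval $[\sigma_{k-1},\sigma_k)$ one has $Y_t=\tilde y^{k-1}_{\sigma_{k-1}}+\pr_{\tilde y^{k-1}_{\sigma_{k-1}}}(W_t-W_{\sigma_{k-1}})$, which is $\F^W_t$-measurable; hence $Y$ is $(\F^W_t)$-adapted.

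It remains to match the two notions of solution. The key observation is that for $y\in\coalex$ the operator $\pr_{y_s}$ is \emph{piecewise constant} in $s$, equal to $\pr_{y_{\tau^y_k}}$ on $[\tau^y_k,\tau^y_{k-1})$ and jumping only at the finitely many times $\tau^y_k$. Thus, applied to $Y\in\coalex$, the integrand $\pr_{Y_s}$ is a simple integrand whose jumps occur at the stopping times $\tau^Y_k$ and whose values $\pr_{Y_{\tau^Y_k}}$ are $\F^W_{\tau^Y_k}$-measurable, so the Itô integral $\int_0^t\pr_{Y_s}\,\mathrm dW_s$ reduces to the telescoping finite sum in~\eqref{equ_definition_of_the_integral} evaluated at $x=W$. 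Consequently, for processes in $\coalex$ the stochastic equation and the deterministic equation~\eqref{equ_deterministic_equation} with $x=W$ coincide. Applied to $Y=S(W)$, this proves existence; combined with the pathwise uniqueness in Lemma~\ref{lem_solution_to_deterministic_equation}, it will also give uniqueness once I know that \emph{every} adapted solution lies in $\coalex$.

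The main obstacle is exactly this last point. Let $Y$ be any $(\F^W_t)$-adapted solution with $Y_t\in\Li$. Since $\|\pr_{Y_s}\|\le1$ and $L_2(g)$ is finite-dimensional, $\int_0^t\|\pr_{Y_s}h\|_{L_2}^2\,\mathrm ds\le\|h\|_{L_2}^2\,t<\infty$, so $(Y_t,h)_{L_2}$ is a continuous square-integrable martingale for every $h\in L_2(g)$; in particular $Y$ is a continuous martingale. Writing $Y_t=\sum_{k=1}^n y_k(t)\I_{\pi_k}$ over the steps $\pi_k$ of $g$, each coordinate $y_k=\tfrac{1}{\Leb(\pi_k)}(Y_\cdot,\I_{\pi_k})_{L_2}$ is a continuous martingale, and because $Y_t\in\Li$ the adjacent differences $d_k:=y_{k+1}-y_k$ are \emph{non-negative} continuous martingales. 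A non-negative continuous martingale is a supermartingale, hence absorbed at $0$: setting $\tau=\inf\{t:d_k(t)=0\}$, optional sampling with $\rho=\tau\wedge T\le T$ gives $\E{d_k(T)\I_{\{\tau\le T\}}}\le\E{d_k(\tau\wedge T)\I_{\{\tau\le T\}}}=\E{d_k(\tau)\I_{\{\tau\le T\}}}=0$ for every $T$, so by non-negativity and continuity $d_k\equiv0$ on $[\tau,\infty)$. By monotonicity of $Y_t\in\Li$ this is precisely the coalescing property of~\eqref{defin_coalex}: once two coordinates coincide they remain equal. Hence $Y\in\coalex$ almost surely, and by the previous paragraph $Y$ solves~\eqref{equ_deterministic_equation} with $x=W$, whence $Y=S(W)$ by uniqueness of the deterministic solution. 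The conceptual heart is the absorption-at-zero argument forcing any solution into $\coalex$; the reduction of the Itô integral to the pathwise sum~\eqref{equ_definition_of_the_integral} is the remaining, routine ingredient.
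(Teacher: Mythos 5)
Your overall architecture mirrors the paper's: pathwise well-posedness from Lemma~\ref{lem_solution_to_deterministic_equation}, existence of an adapted solution via $Y=S(W)$, identification of the It\^o integral with the telescoping sum~\eqref{equ_definition_of_the_integral} for coalescing paths, and uniqueness by forcing every adapted solution into the coalescing class. The paper's one-line proof delegates this last step to \cite[Proposition~2.2]{Konarovskyi:EJP:2017} (every $\Li$-valued continuous martingale starting from $g$ belongs to $\coal$ almost surely); you try to prove it by hand, and that is where there is a genuine gap.

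The gap is the sentence ``Writing $Y_t=\sum_{k=1}^n y_k(t)\I_{\pi_k}$ over the steps $\pi_k$ of $g$''. This decomposition presupposes that the solution takes values in $L_2(g)$, i.e.\ that $Y_t$ is a.s.\ constant on each step of $g$. Nothing in the hypotheses gives this: the corollary only requires $Y_t\in\Li$, and a priori a solution could develop steps not subordinate to the partition of $g$ (or even strictly increasing pieces). In that case $\pr_{Y_s}$ does not map $L_2(g)$ into itself, the integral $\int_0^t\pr_{Y_s}\,\mathrm dW_s$ leaves $L_2(g)$, your decomposition is false (the right-hand side is only $\pr_g Y_t$), and $Y\notin\coalex$ simply because $\coalex$ is by definition a subset of $\mC([0,\infty),L_2(g))$; see~\eqref{defin_coalex}. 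Your absorption argument then only proves coalescence of the $n$ fixed averages $y_k=\frac{1}{\Leb(\pi_k)}(Y_\cdot,\I_{\pi_k})_{L_2}$, which is strictly weaker than the pointwise coalescing property defining $\coalex$ and does not rule out such solutions. Constancy on the initial steps is exactly the non-trivial content that the paper imports from \cite[Proposition~2.2]{Konarovskyi:EJP:2017} (the coalescing property of $\Li$-valued continuous martingales, applied at time $0$ to points lying in the same step of $g$); so your proof assumes precisely the point it was supposed to replace.

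The good news is that your own toolkit closes the hole, so the approach is salvageable and would then be self-contained where the paper relies on a citation. For any two subintervals $I,J$ of a single step $\pi_k$ with $I$ entirely to the left of $J$, the process $D_{I,J}(t):=\frac{1}{\Leb(J)}(Y_t,\I_J)_{L_2}-\frac{1}{\Leb(I)}(Y_t,\I_I)_{L_2}$ is a continuous square-integrable martingale (the bound $\|\pr_g\pr_{Y_s}h\|_{L_2}\leq\|h\|_{L_2}$ works for every $h\in L_2$, not only $h\in L_2(g)$), it is non-negative because $Y_t\in\Li$, and it starts at $0$ because $g$ is constant on $\pi_k$; hence $D_{I,J}\equiv 0$ a.s. Running over countably many rational pairs $(I,J)$ and using monotonicity of $Y_t$ forces $Y_t$ to be a.e.\ constant on each $\pi_k$ for all $t$, almost surely. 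Only after inserting this step do your decomposition, the claim $Y\in\coalex$, and the reduction to Lemma~\ref{lem_solution_to_deterministic_equation} become valid.
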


\begin{proof} 
  The statement of the lemma directly follows from Lemma~\ref{lem_solution_to_deterministic_equation} and the fact that $L_2^{\uparrow}$-valued continuous martingales starting from $g$ belongs to $\coal$ almost surely (see~\cite[Proposition~2.2]{Konarovskyi:EJP:2017}). 
\qed
\end{proof}

\subsection{On map $\varphi$}

In Remark~\ref{explanation_phi}, we announced the following result. 
\begin{lemma}
\label{lem:set_coal_as_preimage}
For every $y \in \coal$ and $z=(z_k)_{k \geq 1} \in \mC_0[0, \infty)^\N$ define similarly to~\eqref{formula_phi} 
\[
  \varphi_t(y,z)= y_t+\sum_{k=1}^\infty e_k^y \I_{\left\{ t \geq \tau_k^y \right\}} z_k(t - \tau_k^y), \quad t \geq 0,
\]
if the series converges in $\mC([0,\infty),L_2)$. 
Then for each $y \in \coal$, $\varphi(y,z)$ belongs to $\coal$ if and only if $z=0$. 
\end{lemma}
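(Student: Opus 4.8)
The plan is to treat the two implications separately, the interesting one being the necessity of $z=0$. If $z=0$ then $\varphi_t(y,z)=y_t$ for every $t\ge0$, so $\varphi(y,z)=y\in\coal$ by assumption; this settles the ``if'' direction. For the converse, suppose $w:=\varphi(y,z)\in\coal$ and, aiming at a contradiction, that $z_k\not\equiv0$ for some $k\ge1$. The first step is to isolate the coordinate of $w$ along $e_k^y$. Since the partition of $y$ can only coarsen in time, $y_t\in L_2(y_t)\subseteq L_2(y_{\tau_k^y})$ for $t\ge\tau_k^y$, while $e_k^y\in L_2(y_{\tau_k^y})^{\bot}$ by Lemma~\ref{lem:onb}; together with the orthonormality of $\{e_l^y\}_{l\ge0}$ this gives, directly from the definition of $\varphi$,
\[
(w_t,e_k^y)_{L_2}=(y_t,e_k^y)_{L_2}+z_k(t-\tau_k^y)=z_k(t-\tau_k^y),\qquad t\ge\tau_k^y .
\]
In particular $(w_{\tau_k^y},e_k^y)_{L_2}=z_k(0)=0$, whereas since $z_k\not\equiv0$ and $z_k(0)=0$ there is a time $t^{\ast}>\tau_k^y$ with $(w_{t^{\ast}},e_k^y)_{L_2}\neq0$.

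The key step, and the one that requires the coalescing structure, is to convert these two scalar facts into statements about a genuine pair of particles. Recall from \eqref{eky} that $e_k^y$ is (a normalisation of) $\sqrt{(c-b)/(b-a)}\,\I_{[a,b)}-\sqrt{(b-a)/(c-b)}\,\I_{[b,c)}$, where $[a,b)$ and $[b,c)$ are the two steps of $y_{\tau_{k+1}^y}$ that merge into the single step $[a,c)$ of $y_{\tau_k^y}$ at time $\tau_k^y$. A direct computation shows that $(w_t,e_k^y)_{L_2}$ is a positive multiple of the difference between the mean of $w_t$ over $[a,b)$ and its mean over $[b,c)$. Now $w_t$ lies in $\Li$, so I may work with its non-decreasing representative; for such a function the mean over the left interval never exceeds the mean over the right one, with equality exactly when the function is constant on $(a,c)$. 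Hence $(w_{\tau_k^y},e_k^y)_{L_2}=0$ forces $w_{\tau_k^y}$ to be constant on $(a,c)$, i.e. $w_{\tau_k^y}(u)=w_{\tau_k^y}(v)$ for all $u,v\in(a,c)$. On the other hand $\I_{[a,c)}\in L_2(y_{\tau_k^y})$ is orthogonal to $e_k^y$, so a function constant on $[a,c)$ pairs to $0$ with $e_k^y$; consequently $(w_{t^{\ast}},e_k^y)_{L_2}\neq0$ implies that $w_{t^{\ast}}$ is \emph{not} constant on $(a,c)$, and there exist $u<v$ in $(a,c)$ with $w_{t^{\ast}}(u)\neq w_{t^{\ast}}(v)$.

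Finally I would invoke Property (G4). For the pair $u,v\in(a,c)$ just produced we have $w_{\tau_k^y}(u)=w_{\tau_k^y}(v)$ but $w_{t^{\ast}}(u)\neq w_{t^{\ast}}(v)$ with $t^{\ast}>\tau_k^y$, which contradicts the requirement that coalesced labels never separate. Therefore $z_k\equiv0$ for every $k$, i.e. $z=0$. I expect the main obstacle to be exactly the passage carried out in the middle paragraph: at time $\tau_k^y$ the function $w_{\tau_k^y}$ need not be manifestly constant on $(a,c)$, because the finer modes $e_l^y$ with $l>k$ (whose supports sit inside $[a,b)$ or $[b,c)$) may still contribute; the point is that the single scalar identity $(w_{\tau_k^y},e_k^y)_{L_2}=0$ combined with the monotonicity built into $\Li$ is already enough to upgrade ``equal averages'' to ``constant on $(a,c)$'', thereby bypassing any need to control those cross terms.
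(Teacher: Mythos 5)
Your proof is correct, and it takes a genuinely leaner route than the paper's. The paper argues constructively: it sets $\gamma_t(y,z):=\varphi_t(y,z)-y_t$ and shows $\gamma\equiv 0$ in two steps. Step I proves $\gamma_{\tau_k^y}=0$ on the merged step $[a,c)$ by noting that $\gamma_{\tau_k^y}$ is non-decreasing there and orthogonal to \emph{both} $\I_{[a,b)}$ and $\I_{[a,c)}$, so that $u\mapsto(\gamma_{\tau_k^y},\I_{[a,u)})_{L_2}$ is convex and vanishes at $a$, $b$, $c$, hence vanishes identically; Step II then propagates this vanishing to every $t>0$ and every step of $y_t$ using (G4) together with the orthogonality of $\gamma_t$ to $L_2(y_t)$, which yields $z=0$. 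You instead argue by contradiction from the single scalar identity $(w_t,e_k^y)_{L_2}=z_k(t-\tau_k^y)$ for $t\geq\tau_k^y$ (correctly justified: $y_t\in L_2(y_{\tau_k^y})$ and $e_k^y\perp L_2(y_{\tau_k^y})$ by Lemma~\ref{lem:onb}, and term-by-term pairing is legitimate because the series is assumed to converge in $\mC([0,\infty),L_2)$). Your mean-comparison step --- a non-decreasing function with equal averages over $[a,b)$ and $[b,c)$ must be constant on $(a,c)$ --- is the paper's Step I convexity argument in an equivalent, arguably more transparent form, and it needs only one orthogonality relation instead of two; your contradiction scheme then makes the paper's Step II unnecessary, since non-constancy of $w_{t^*}$ on $(a,c)$ at a single later time $t^*$ (which follows because a function constant on $(a,c)$ pairs to zero with $e_k^y$, as $(\I_{[a,c)},e_k^y)_{L_2}=0$) already violates (G4). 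The only point worth making explicit --- the paper glosses over it in the same way --- is the identification of versions: (G4) is a pointwise statement about the version that is right-continuous in $u$, while your constancy conclusions are a priori almost-everywhere statements about $L_2$-classes; since for $w\in\coal$ and $t>0$ one has $w_t\in\St$ and the version from (G1) is right-continuous and non-decreasing in $u$, almost-everywhere constancy on $(a,c)$ upgrades to pointwise constancy there, so your application of (G4) is sound.
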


\begin{proof}
It is obvious that $\varphi(y,0)=y \in \coal$.

We assume now that $\varphi(y,z)$ belongs to $\coal$ and prove that $z=0$. Set
\[
  \gamma(y,z)= \sum_{k=1}^\infty e_k^y \I_{\left\{ t \geq \tau_k^y \right\}} z_k(t - \tau_k^y), \quad t \geq 0,
\]
and show that $\gamma(y,z)=0$. This will immediately imply $z=0$.

{\it Step I.}
Let $k \geq 1$ be fixed.  By~\eqref{eky}, there exist $a<b<c$ such that 
\begin{align*}
e_k^y = \frac{1}{\sqrt{c-a}}  \left(\sqrt{\frac{c-b}{b-a}} \I_{[a,b)} -\sqrt{\frac{b-a}{c-b}} \I_{[b,c)} \right).
\end{align*}
The goal of this step is to show that $\varphi_{\tau_k^y}(y,z)(u) = y_{\tau_k^y}(u)$ for every $u \in [a,c)$, in other words, that $\gamma_{\tau_k^y}(y,z)$ is equal to zero on the interval $[a,c)$. 

By the construction of $\tau_k^y$ and $e_k^y$, $y_{\tau_k^y}$ is constant on the interval $[a,c)$. Furthermore, since $\varphi(y,z) \in \coal$, $\varphi_{\tau_k^y}(y,z)$ belongs to $\Li$. Hence, we can deduce that $\gamma_{\tau_k^y}(y,z) =  \varphi_{\tau_k^y}(y,z) -y_{\tau_k^y}(y,z)$ is non-decreasing on $[a,c)$, as a difference of a non-decreasing function and a constant function. Furthermore, 
\begin{align*}
  \gamma_{\tau_k^y}(y,z)= \sum_{l=k}^\infty e_l^y  z_l(\tau_k^y - \tau_l^y) = \sum_{l=k+1}^\infty e_l^y  z_l(\tau_k^y - \tau_l^y),
\end{align*}
since $z_k(0)=0$. Hence, $\gamma_{\tau_k^y}(y,z)$ belongs to $\spann \left\{e_l^y,\ l \geq k+1 \right\}$, whereas $\I_{[a,b)}$ and $\I_{[a,c)}$ both belong to $\spann \left\{e_l^y,\ l \leq k\right\}$. Indeed, $\I_{[a,c)} \in L_2(y_{\tau_k^y})= \spann \left\{ e_l^y,\ l < k\right\}$, by Lemma~\ref{lem:onb}, and $\I_{[a,b)}  \in \spann \left\{\I_{[a,c)}, e_k^y\right\}$.  Recall that $\{e_l^y,\ l \geq 0\}$ is an orthonormal basis of $L_2$. Thus,
\begin{align*}
  \left(\gamma_{\tau_k^y}(y,z), \I_{[a,b)}\right)_{L_2} = \left(\gamma_{\tau_k^y}(y,z), \I_{[a,c)}\right)_{L_2}=0.
\end{align*}
So, we can deduce that $u \mapsto (\gamma_{\tau_k^y}(y,z), \I_{[a,u)})_{L_2}$ is a convex function on $[a,c]$ which vanishes at $a$, $b$ and $c$. Thus, it is zero everywhere on $[a,c]$. In particular, $\gamma_{\tau_k^y}(y,z)(u)=0$ for every $u \in (a,c)$. Consequently, $\varphi_{\tau_k^y}(y,z)(u) = y_{\tau_k^y}(u)$ for every $u \in (a,c)$. The equality also holds for $u=a$, by the right-continuity of $\varphi_{\tau_k^y}(y,z)$ and $y_{\tau_k^y}$. 

{\it Step II.} Now let $t >0$ be fixed. By Property (G3) of the definition of $\coal$ in Section~\ref{parag:mmaf_coal}, $y_t$ belongs to $\St$, and thus, 
\[
  y_t(u) = \sum_{j=1}^n y_j \I_{[a_j,c_j)}(u),
\]
for pairwise distinct $y_j$, $j\in [n]$. Fix $j \in [n]$. By coalescence Property (G4), there exists $k \geq 1$ such that $u \mapsto y_s(u)$ is constant on $[a_j,c_j)$ for every $s \geq \tau_k^y$ and non-constant on $[a_j,c_j)$ for every $s < \tau_k^y$. By Step I, $y_{\tau_k^y}= \varphi_{\tau_k^y}(y,z)$ on $[a_j,c_j)$. Thus, $\varphi_{\tau_k^y}(y,z)$ is constant on $[a_j,c_j)$. By Property (G4) again, now applied to $\varphi(y,z)$, $\varphi_t(y,z)$ is constant on $[a_j,c_j)$ due to $t \geq \tau_k^y$.  As the difference of two constant functions, $\gamma_t(y,z)$ is also constant on $[a_j,c_j)$. Moreover, by the construction of $\gamma$ and Lemma~\ref{lem:onb}, $\gamma_t(y,z)$ is orthogonal to $L_2(y_t)$. Hence $\gamma_t(y,z)$ is also orthogonal to $\I_{[a_j,c_j)}$. Therefore, we can conclude that $\gamma_t(y,z)= 0 $ on $[a_j,c_j)$. Since $j \in [n]$ and $t>0$ were arbitrary, we deduce that $\gamma_t(y,z)=0$ on $[0,1)$ for every $t >0$. This finishes the proof of the lemma.
\qed
\end{proof}

\begin{acknowledgements}
The authors are very grateful to Max von Renesse for very useful discussions and suggestions. The first author also thanks Andrey A. Dorogovtsev for his helpful comments and the interest  to the studying problem. 
The first author was partly supported by the Deutsche Forschungsgemeinschaft (DFG, German Research Foundation) – SFB 1283/2 2021 – 317210226.  
\end{acknowledgements}

%\bibliographystyle{spmpsci}   
%\bibliography{ref}  

\begin{thebibliography}{10}
\providecommand{\url}[1]{{#1}}
\providecommand{\urlprefix}{URL }
\expandafter\ifx\csname urlstyle\endcsname\relax
  \providecommand{\doi}[1]{DOI~\discretionary{}{}{}#1}\else
  \providecommand{\doi}{DOI~\discretionary{}{}{}\begingroup
  \urlstyle{rm}\Url}\fi

\bibitem{Arratia:1979}
Arratia, R.A.: Coalescing {B}rownian motion on the line.
\newblock ProQuest LLC, Ann Arbor, MI (1979).
\newblock Thesis (Ph.D.)--The University of Wisconsin - Madison

\bibitem{Arratia:1981}
Arratia, R.A.: Coalescing brownian motion and the voter model on $\mathbb{Z}$.
\newblock Unpublished partial manuscript.  (1981)

\bibitem{Berestycki:2015}
Berestycki, N., Garban, C., Sen, A.: Coalescing {B}rownian flows: a new
  approach.
\newblock Ann. Probab. \textbf{43}(6), 3177--3215 (2015).
\newblock \doi{10.1214/14-AOP957}.
\newblock \urlprefix\url{http://dx.doi.org/10.1214/14-AOP957}

\bibitem{Bertini:2015}
Bertini, L., De~Sole, A., Gabrielli, D., Jona-Lasinio, G., Landim, C.:
  Macroscopic fluctuation theory.
\newblock Rev. Mod. Phys. \textbf{87}, 593--636 (2015).
\newblock \doi{10.1103/RevModPhys.87.593}.
\newblock \urlprefix\url{https://link.aps.org/doi/10.1103/RevModPhys.87.593}

\bibitem{Blount:2010}
Blount, D., Kouritzin, M.A.: On convergence determining and separating classes
  of functions.
\newblock Stochastic Process. Appl. \textbf{120}(10), 1898--1907 (2010).
\newblock \doi{10.1016/j.spa.2010.05.018}.
\newblock \urlprefix\url{https://doi.org/10.1016/j.spa.2010.05.018}

\bibitem{Cherny:2005}
Cherny, A.S., Engelbert, H.J.: Singular stochastic differential equations,
  \emph{Lecture Notes in Mathematics}, vol. 1858.
\newblock Springer-Verlag, Berlin (2005).
\newblock \doi{10.1007/b104187}.
\newblock \urlprefix\url{https://doi.org/10.1007/b104187}

\bibitem{Cornalba:2019}
Cornalba, F., Shardlow, T., Zimmer, J.: A regularized {D}ean-{K}awasaki model:
  derivation and analysis.
\newblock SIAM J. Math. Anal. \textbf{51}(2), 1137--1187 (2019).
\newblock \doi{10.1137/18M1172697}.
\newblock \urlprefix\url{https://doi.org/10.1137/18M1172697}

\bibitem{Cornalba:2020}
Cornalba, F., Shardlow, T., Zimmer, J.: From weakly interacting particles to a
  regularised {D}ean-{K}awasaki model.
\newblock Nonlinearity \textbf{33}(2), 864--891 (2020).
\newblock \doi{10.1088/1361-6544/ab5174}.
\newblock \urlprefix\url{https://doi.org/10.1088/1361-6544/ab5174}

\bibitem{Dean:1996}
Dean, D.S.: Langevin equation for the density of a system of interacting
  {L}angevin processes.
\newblock J. Phys. A \textbf{29}(24), L613--L617 (1996).
\newblock \doi{10.1088/0305-4470/29/24/001}.
\newblock \urlprefix\url{http://dx.doi.org/10.1088/0305-4470/29/24/001}

\bibitem{Delfau:2016}
Delfau, J.B., Ollivier, H., L\'opez, C., Blasius, B., Hern\'andez-Garc\'\i~a,
  E.: Pattern formation with repulsive soft-core interactions: discrete
  particle dynamics and {D}ean-{K}awasaki equation.
\newblock Phys. Rev. E \textbf{94}(4), 042120, 13 (2016)

\bibitem{Donev_Fai:2014}
Donev, A., Fai, T.G., Vanden-Eijnden, E.: A reversible mesoscopic model of
  diffusion in liquids: from giant fluctuations to {F}ick's law.
\newblock Journal of Statistical Mechanics: Theory and Experiment
  \textbf{2014}(4), P04004 (2014).
\newblock \urlprefix\url{http://stacks.iop.org/1742-5468/2014/i=4/a=P04004}

\bibitem{Donev_Vanden_Eijnden:2014}
Donev, A., Vanden-Eijnden, E.: Dynamic density functional theory with
  hydrodynamic interactions and fluctuations.
\newblock The Journal of Chemical Physics \textbf{140}(23), 234115 (2014).
\newblock \doi{10.1063/1.4883520}.
\newblock \urlprefix\url{https://doi.org/10.1063/1.4883520}

\bibitem{Dorogovtsev:2004}
Dorogovtsev, A.A.: One {B}rownian stochastic flow.
\newblock Theory Stoch. Process. \textbf{10}(3-4), 21--25 (2004)

\bibitem{Vovchanskii:2012}
Dorogovtsev, A.A., Gnedin, A.V., Vovchanskii, M.B.: Iterated logarithm law for
  size of clusters in arratia flow.
\newblock Theory Stoch. Process. \textbf{18(34)}(2), 1--7 (2012)

\bibitem{Korenovska:2017}
Dorogovtsev, A.A., Korenovska, I.A., Glinyanaya, E.V.: On some random integral
  operators generated by an {A}rratia flow.
\newblock Theory Stoch. Process. \textbf{22}(2), 8--18 (2017)

\bibitem{Dorogovtsev:2010}
Dorogovtsev, A.A., Ostapenko, O.V.: Large deviations for flows of interacting
  {B}rownian motions.
\newblock Stoch. Dyn. \textbf{10}(3), 315--339 (2010).
\newblock \doi{10.1142/S0219493710002978}.
\newblock \urlprefix\url{http://dx.doi.org/10.1142/S0219493710002978}

\bibitem{Dorogovtsev:2020}
Dorogovtsev, A.A., Riabov, G.V., Schmalfu\ss, B.: Stationary points in
  coalescing stochastic flows on {$\mathbb{R}$}.
\newblock Stochastic Process. Appl. \textbf{130}(8), 4910--4926 (2020).
\newblock \doi{10.1016/j.spa.2020.02.005}.
\newblock \urlprefix\url{https://doi.org/10.1016/j.spa.2020.02.005}

\bibitem{Embacher:2018}
Embacher, P., Dirr, N., Zimmer, J., Reina, C.: Computing diffusivities from
  particle models out of equilibrium.
\newblock Proc. A. \textbf{474}(2212), 20170694, 21 (2018).
\newblock \doi{10.1098/rspa.2017.0694}.
\newblock \urlprefix\url{https://doi.org/10.1098/rspa.2017.0694}

\bibitem{Ethier:1986}
Ethier, S.N., Kurtz, T.G.: Markov processes.
\newblock Wiley Series in Probability and Mathematical Statistics: Probability
  and Mathematical Statistics. John Wiley \& Sons, Inc., New York (1986).
\newblock \doi{10.1002/9780470316658}.
\newblock \urlprefix\url{http://dx.doi.org/10.1002/9780470316658}.
\newblock Characterization and convergence

\bibitem{Gess:2019}
Fehrman, B., Gess, B.: Well-posedness of nonlinear diffusion equations with
  nonlinear, conservative noise.
\newblock Arch. Ration. Mech. Anal. \textbf{233}(1), 249--322 (2019).
\newblock \doi{10.1007/s00205-019-01357-w}.
\newblock \urlprefix\url{https://doi.org/10.1007/s00205-019-01357-w}

\bibitem{Fontes:2004}
Fontes, L.R.G., Isopi, M., Newman, C.M., Ravishankar, K.: The {B}rownian web:
  characterization and convergence.
\newblock Ann. Probab. \textbf{32}(4), 2857--2883 (2004).
\newblock \doi{10.1214/009117904000000568}.
\newblock \urlprefix\url{http://dx.doi.org/10.1214/009117904000000568}

\bibitem{Gawarecki:2011}
Gawarecki, L., Mandrekar, V.: Stochastic differential equations in infinite
  dimensions with applications to stochastic partial differential equations.
\newblock Probability and its Applications (New York). Springer, Heidelberg
  (2011)

\bibitem{Giacomin:1999}
Giacomin, G., Lebowitz, J.L., Presutti, E.: Deterministic and stochastic
  hydrodynamic equations arising from simple microscopic model systems.
\newblock In: Stochastic partial differential equations: six perspectives,
  \emph{Math. Surveys Monogr.}, vol.~64, pp. 107--152. Amer. Math. Soc.,
  Providence, RI (1999).
\newblock \doi{10.1090/surv/064/03}.
\newblock \urlprefix\url{https://doi.org/10.1090/surv/064/03}

\bibitem{Glinyanaya:2018}
Glinyanaya, E.V., Fomichov, V.V.: Limit theorems for the number of clusters of
  the {A}rratia flow.
\newblock Theory Stoch. Process. \textbf{23}(2), 33--40 (2018).
\newblock \doi{10.1080/00029890.1916.11998164}.
\newblock \urlprefix\url{https://doi.org/10.1080/00029890.1916.11998164}

\bibitem{Ikeda:1989}
Ikeda, N., Watanabe, S.: Stochastic differential equations and diffusion
  processes, \emph{North-Holland Mathematical Library}, vol.~24, second edn.
\newblock North-Holland Publishing Co., Amsterdam; Kodansha, Ltd., Tokyo (1989)

\bibitem{Ito:1968}
It\^{o}, K., Nisio, M.: On the convergence of sums of independent {B}anach
  space valued random variables.
\newblock Osaka Math. J. \textbf{5}, 35--48 (1968).
\newblock \urlprefix\url{http://projecteuclid.org/euclid.ojm/1200692040}

\bibitem{Kakutani:1948}
Kakutani, S.: On equivalence of infinite product measures.
\newblock Ann. of Math. (2) \textbf{49}, 214--224 (1948).
\newblock \doi{10.2307/1969123}.
\newblock \urlprefix\url{https://doi.org/10.2307/1969123}

\bibitem{Kallenberg:2002}
Kallenberg, O.: Foundations of modern probability, second edn.
\newblock Probability and its Applications (New York). Springer-Verlag, New
  York (2002).
\newblock \doi{10.1007/978-1-4757-4015-8}.
\newblock \urlprefix\url{http://dx.doi.org/10.1007/978-1-4757-4015-8}

\bibitem{Kawasaki:1994}
Kawasaki, K.: Stochastic model of slow dynamics in supercooled liquids and
  dense colloidal suspensions.
\newblock Physica A: Statistical Mechanics and its Applications
  \textbf{208}(1), 35 -- 64 (1994).
\newblock \doi{http://dx.doi.org/10.1016/0378-4371(94)90533-9}.
\newblock
  \urlprefix\url{http://www.sciencedirect.com/science/article/pii/0378437194905339}

\bibitem{Kipnis:1989}
Kipnis, C., Olla, S., Varadhan, S.R.S.: Hydrodynamics and large deviation for
  simple exclusion processes.
\newblock Comm. Pure Appl. Math. \textbf{42}(2), 115--137 (1989).
\newblock \doi{10.1002/cpa.3160420202}.
\newblock \urlprefix\url{https://doi.org/10.1002/cpa.3160420202}

\bibitem{Konarovskyi:TVP:2010}
Konarovski\u\i, V.V.: On an infinite system of diffusing particles with
  coalescing.
\newblock Teor. Veroyatn. Primen. \textbf{55}(1), 157--167 (2010).
\newblock \urlprefix\url{https://doi.org/10.1137/S0040585X97984693}

\bibitem{Konarovskyi:EJP:2017}
Konarovskyi, V.: On asymptotic behavior of the modified {A}rratia flow.
\newblock Electron. J. Probab. \textbf{22}, Paper No. 19, 31 (2017).
\newblock \doi{10.1214/17-EJP34}.
\newblock \urlprefix\url{http://dx.doi.org/10.1214/17-EJP34}

\bibitem{Konarovskyi:AP:2017}
Konarovskyi, V.: A system of coalescing heavy diffusion particles on the real
  line.
\newblock Ann. Probab. \textbf{45}(5), 3293--3335 (2017).
\newblock \doi{10.1214/16-AOP1137}.
\newblock \urlprefix\url{https://doi.org/10.1214/16-AOP1137}

\bibitem{Konarovskyi:DK:2020}
Konarovskyi, V., Lehmann, T., von Renesse, M.: On {D}ean-{K}awasaki dynamics
  with smooth drift potential.
\newblock J. Stat. Phys. \textbf{178}(3), 666--681 (2020).
\newblock \doi{10.1007/s10955-019-02449-3}.
\newblock \urlprefix\url{https://doi.org/10.1007/s10955-019-02449-3}

\bibitem{Konarovskyi:ECP:2018}
Konarovskyi, V., Lehmann, T., von Renesse, M.K.: Dean-{K}awasaki dynamics:
  ill-posedness vs. triviality.
\newblock Electron. Commun. Probab. \textbf{24}, Paper No. 8, 9 (2019).
\newblock \doi{10.1214/19-ECP208}.
\newblock \urlprefix\url{https://doi.org/10.1214/19-ECP208}

\bibitem{Konarovskyi:CPAM:2019}
Konarovskyi, V., von Renesse, M.K.: Modified massive {A}rratia flow and
  {W}asserstein diffusion.
\newblock Comm. Pure Appl. Math. \textbf{72}(4), 764--800 (2019).
\newblock \doi{10.1002/cpa.21758}.
\newblock \urlprefix\url{https://doi.org/10.1002/cpa.21758}

\bibitem{Konarovskyi:2014}
Konarovskyi, V.V.: Large deviations principle for finite system of heavy
  diffusion particles.
\newblock Theory Stoch. Process. \textbf{19}(1), 37--45 (2014)

\bibitem{Le_Jan_Raimond:2004}
Le~Jan, Y., Raimond, O.: Flows, coalescence and noise.
\newblock Ann. Probab. \textbf{32}(2), 1247--1315 (2004).
\newblock \doi{10.1214/009117904000000207}.
\newblock \urlprefix\url{http://dx.doi.org/10.1214/009117904000000207}

\bibitem{Le_Jan:2004}
Le~Jan, Y., Raimond, O.: Sticky flows on the circle and their noises.
\newblock Probab. Theory Related Fields \textbf{129}(1), 63--82 (2004).
\newblock \doi{10.1007/s00440-003-0324-9}.
\newblock \urlprefix\url{http://dx.doi.org/10.1007/s00440-003-0324-9}

\bibitem{Marconi:1999}
Marconi, U.M.B., Tarazona, P.: Dynamic density functional theory of fluids.
\newblock The Journal of Chemical Physics \textbf{110}(16), 8032--8044 (1999).
\newblock \doi{10.1063/1.478705}.
\newblock \urlprefix\url{https://doi.org/10.1063/1.478705}

\bibitem{Marx:2018}
Marx, V.: A new approach for the construction of a {W}asserstein diffusion.
\newblock Electron. J. Probab. \textbf{23}, Paper No. 124, 54 (2018).
\newblock \doi{10.1214/18-EJP254}.
\newblock \urlprefix\url{https://doi.org/10.1214/18-EJP254}

\bibitem{Munasinghe:2006}
Munasinghe, R., Rajesh, R., Tribe, R., Zaboronski, O.: Multi-scaling of the
  {$n$}-point density function for coalescing {B}rownian motions.
\newblock Comm. Math. Phys. \textbf{268}(3), 717--725 (2006).
\newblock \doi{10.1007/s00220-006-0110-5}.
\newblock \urlprefix\url{http://dx.doi.org/10.1007/s00220-006-0110-5}

\bibitem{Norris:2012}
Norris, J., Turner, A.: Hastings-{L}evitov aggregation in the small-particle
  limit.
\newblock Comm. Math. Phys. \textbf{316}(3), 809--841 (2012).
\newblock \doi{10.1007/s00220-012-1552-6}.
\newblock \urlprefix\url{http://dx.doi.org/10.1007/s00220-012-1552-6}

\bibitem{Parthasarathy:1967}
Parthasarathy, K.R.: Probability measures on metric spaces.
\newblock Probability and Mathematical Statistics, No. 3. Academic Press, Inc.,
  New York-London (1967)

\bibitem{Piterbarg:1998}
Piterbarg, V.V.: Expansions and contractions of isotropic stochastic flows of
  homeomorphisms.
\newblock Ann. Probab. \textbf{26}(2), 479--499 (1998).
\newblock \doi{10.1214/aop/1022855641}.
\newblock \urlprefix\url{https://doi.org/10.1214/aop/1022855641}

\bibitem{Riabov:2017}
Riabov, G.V.: Random dynamical systems generated by coalescing stochastic flows
  on {$\mathbb{R}$}.
\newblock Stoch. Dyn. \textbf{18}(4), 1850031, 24 (2018).
\newblock \doi{10.1142/S0219493718500314}.
\newblock \urlprefix\url{https://doi.org/10.1142/S0219493718500314}

\bibitem{Rotskoff:2018}
Rotskoff, G.M., Vanden-Eijnden, E.: Neural networks as interacting particle
  systems: Asymptotic convexity of the loss landscape and universal scaling of
  the approximation error.
\newblock Preprint  (2018).
\newblock ArXiv:1805.00915

\bibitem{Sadhu:2016}
Sadhu, T., Derrida, B.: Correlations of the density and of the current in
  non-equilibrium diffusive systems.
\newblock Journal of Statistical Mechanics: Theory and Experiment
  \textbf{2016}(11), 113202 (2016).
\newblock \urlprefix\url{http://stacks.iop.org/1742-5468/2016/i=11/a=113202}

\bibitem{Sarkar:2013}
Sarkar, A., Sun, R.: Brownian web in the scaling limit of supercritical
  oriented percolation in dimension {$1+1$}.
\newblock Electron. J. Probab. \textbf{18}, no. 21, 23 (2013).
\newblock \doi{10.1214/EJP.v18-2019}.
\newblock \urlprefix\url{http://dx.doi.org/10.1214/EJP.v18-2019}

\bibitem{Shamov:2011}
Shamov, A.: Short-time asymptotics of one-dimensional {H}arris flows.
\newblock Commun. Stoch. Anal. \textbf{5}(3), 527--539 (2011)

\bibitem{Spohn:1991}
Spohn, H.: Large Scale Dynamics of Interacting Particles.
\newblock Springer-Verlag Berlin Heidelberg (1991).
\newblock \doi{10.1007/978-3-642-84371-6}

\bibitem{Torre:2015}
de~la Torre, J.A., Espanol, P., Donev, A.: Finite element discretization of
  non-linear diffusion equations with thermal fluctuations.
\newblock The Journal of Chemical Physics \textbf{142}(9), 094115 (2015).
\newblock \doi{10.1063/1.4913746}.
\newblock \urlprefix\url{https://doi.org/10.1063/1.4913746}

\bibitem{Toth:1998}
T\'oth, B., Werner, W.: The true self-repelling motion.
\newblock Probab. Theory Related Fields \textbf{111}(3), 375--452 (1998).
\newblock \doi{10.1007/s004400050172}.
\newblock \urlprefix\url{http://dx.doi.org/10.1007/s004400050172}

\bibitem{Tribe:2011}
Tribe, R., Zaboronski, O.: Pfaffian formulae for one dimensional coalescing and
  annihilating systems.
\newblock Electron. J. Probab. \textbf{16}, no. 76, 2080--2103 (2011).
\newblock \doi{10.1214/EJP.v16-942}.
\newblock \urlprefix\url{https://doi.org/10.1214/EJP.v16-942}

\bibitem{Tsirelson:2004}
Tsirelson, B.: Nonclassical stochastic flows and continuous products.
\newblock Probab. Surv. \textbf{1}, 173--298 (2004).
\newblock \doi{10.1214/154957804100000042}.
\newblock \urlprefix\url{http://dx.doi.org/10.1214/154957804100000042}

\bibitem{Velenich:2008}
Velenich, A., Chamon, C., Cugliandolo, L.F., Kreimer, D.: On the {B}rownian
  gas: a field theory with a {P}oissonian ground state.
\newblock J. Phys. A \textbf{41}(23), 235002, 28 (2008).
\newblock \doi{10.1088/1751-8113/41/23/235002}.
\newblock \urlprefix\url{https://doi.org/10.1088/1751-8113/41/23/235002}

\end{thebibliography}

\end{document}